\numberwithin{equation}{section}
\theoremstyle{definition} 
\newtheorem{definition}{Definition}[section]
\newtheorem{observation}{Observation}[section]
\newtheorem{example}{Example}[section]
\newtheorem{assumption}{Assumption}[section]
\theoremstyle{plain} 
\newtheorem{theorem}{Theorem}[section]
\newtheorem{lemma}{Lemma}[section]
\newtheorem{proposition}{Proposition}[section]
\newtheorem{corollary}{Corollary}[section]
\newcommand{\conv}{\mathrm{conv}}
\newcommand{\proj}{\mathrm{Proj}}
\newcommand{\opt}{\texttt{opt}}
\newcommand{\mR}{\mathbb{R}}
\newcommand{\X}{X}
\newcommand{\cC}{\mathcal{C}}
\newcommand{\cT}{\mathcal{T}}
\newcommand{\cZ}{\mathcal{Z}}
\newcommand{\bs}[1]{\boldsymbol{#1}}
\newcommand{\E}{\mathbb{E}}
\newcommand{\norm}[1]{\left\lVert#1\right\rVert}
\newcommand{\inner}[2]{\left\langle #1,#2 \right\rangle}
\newcommand{\mudec}{\mu_{\mathrm{dec}}}
\newcommand{\Thetadisc}{\Theta_{\mathrm{disc}}}
\newcommand{\alphamax}{\alpha_{\mathrm{max}}}
\renewcommand{\sp}{\mathrm{span}}
\newcommand{\gap}{\mathrm{Gap}}
\newcommand{\xtr}{x_\mathrm{trial}}
\newcommand{\mB}{\mathbb{B}}
\newcommand{\cB}{\mathcal{B}}
\newcommand{\cM}{\mathcal{M}}
\newcommand{\cD}{\mathcal{D}}
\newcommand{\tD}{\widetilde{\mathcal{D}}}
\newcommand{\tM}{\widetilde{\mathcal{M}}}
\newcommand{\QPt}[2]{\mathrm{QP}\textrm{-}{#1}\textrm{-}{#2}}
\newcommand{\sol}[1]{\mathrm{sol(}#1\mathrm{)}}
\newcommand{\vc}[1]{\mathrm{vec(}#1\mathrm{)}}
\newcommand{\qpvc}[2]{\vc{\QPt{#1}{#2}}}
\newcommand{\epsilonopt}{\epsilon_\textrm{opt}}
\newcommand{\nuopt}{\nu_\textrm{opt}}
\newcommand{\bdm}{\begin{displaymath}}
\newcommand{\edm}{\end{displaymath}}
\newcommand{\beq}{\begin{equation}}
\newcommand{\eeq}{\end{equation}}
\newcommand{\ba}{\begin{aligned}}
\newcommand{\ea}{\end{aligned}}
\newcommand{\expref}[1]{Example~\ref{#1}}
\newcommand{\figref}[1]{Figure~\ref{#1}}
\newcommand{\assref}[1]{Assumption~\ref{#1}}
\newcommand{\thmref}[1]{Theorem~\ref{#1}}
\newcommand{\lemref}[1]{Lemma~\ref{#1}}
\newcommand{\propref}[1]{Proposition~\ref{#1}}
\newcommand{\defref}[1]{Definition~\ref{#1}}
\newcommand{\secref}[1]{Section~\ref{#1}}
\newcommand{\obsref}[1]{Observation~\ref{#1}}
\newcommand{\tabref}[1]{Table~\ref{#1}}
\newcommand{\algoref}[1]{Algorithm~\ref{#1}}
\newcommand{\linref}[1]{Line~\ref{#1}}
\begin{document}
\title[Article Title]{Structured Nonsmooth Optimization Using Functional Encoding and Branching Information}
\author{Fengqiao Luo}\email{fengqiaoluo2014@u.northwestern.edu}

\affil{\orgdiv{Department of Industrial Engineering and Management Science}, \orgname{Northwestern University}, \orgaddress{\state{IL}, \country{United States}}}

\abstract{
We develop a novel gradient-based algorithm for optimizing nonsmooth nonconvex functions where nonsmoothness arises from explicit nonsmooth operators in the objective's analytical form. Our key innovation involves encoding active smooth branches of these operators, enabling both branch function extraction at arbitrary points and transition detection through branch tracking. This approach yields a Branch-Information-Driven Gradient Descent (BIGD) method for encodable piecewise-differentiable functions, with an enhanced version achieving local linear convergence under appropriate conditions. The computationally efficient encoding mechanism is straightforward to implement. The power of using branch information has been proved via substantial numerical experiments  
 compared to some existing nonsmooth optimization methods on standard test problems. Most importantly, for piecewise-smooth problems given analytical expressions,
 implementation of functional encoding can be integrated into a wide range of existing nonsmooth optimization methods to improve the bundle points management, 
 reduce the complexity of the quadratic programming sub-problems, and improve the efficiency of line search.
}

\keywords{nonsmooth optimization, analytical structure exploitation, functional encoding, branching information}

\maketitle

\section{Introduction}
We study the unconstrained optimization problem
\begin{equation}\label{opt:piecewise-smooth}
    \min_{x \in \mathbb{R}^n} f(x),
\end{equation}
where $f$ is continuous but potentially nonconvex and nonsmooth. While general nonsmooth functions may be non-differentiable on dense subsets, we focus on functions whose nonsmoothness stems from explicit nonsmooth operators, including $\max(\cdot)$, $\min(\cdot)$, $|\cdot|$, $(\cdot)_+$, and various rule-based operators that appear in $f$'s analytical form. 
This important class of piecewise-differentiable functions covers numerous practical applications such as nonsmooth penalized objectives in signal processing and machine learning, piecewise nonlinear regression problems, control systems, and deep neural networks. 
Throughout this work, we assume algorithms have direct access to $f$'s analytical form, enabling exploitation of its underlying piecewise structure.

\subsection{Motivation}
Existing methods for nonsmooth optimization include subgradient methods, bundle methods, trust region methods, gradient sampling methods, and quasi-Newton methods. These approaches typically treat the objective as a general non-differentiable function (e.g., locally Lipschitz continuous) where subgradient information is available almost everywhere. While this general framework makes the methods broadly applicable, it fails to fully exploit the analytical structure present in many practical problems of interest.

For the problem class in \eqref{opt:piecewise-smooth}, each nonsmooth operator may induce multiple branch functions, potentially containing further nested nonsmooth operators. Crucially, we can numerically track the active branches of each nonsmooth operator to determine the locally smooth function at any point $x$, with the branching sequence being encodable. This suggests an algorithmic opportunity: by recording the active branch codes at previous iterates, could we develop more efficient optimization methods?

This paper systematically addresses this question through theoretical analysis, algorithmic development, and practical implementation. We demonstrate how branch-coding information can significantly enhance optimization efficiency for piecewise-smooth functions.

\subsection{Review of related works}
Bundle methods are in the category of the most efficient methods for nonsmooth optimization,
which were initiated in \cite{lemarechal1977-bundle-methods}, and have been developed to incorporate more ingredients
to increase stability and efficiency since then. The key idea of bundle methods is to utilize the subgradient
information obtained from previous iterations to build the current local model and determine the next searching direction.
A typical bundle method often consists of five major steps in an iteration: construction of the local model, 
solving the local model to determine the searching direction, line search for getting an efficient while reliable step size,
checking the termination criteria and updating the set of bundle points for the next iteration \cite{bagirov2014-bundle-method}.
The quality and reliability of the local model, as well as the complexity of constructing it 
play an essential role in determining the algorithm performance. To improve the quality 
of the local model, primary bundle methods have been integrated with other methods to develop advanced 
methods such as proximal bundle 
\cite{hintermuller2001-prox-bundle-approx-subgrad,kiwiel2006-prox-bundle-approx-subgrad-linearization,
hare2010-redistributed-prox-bundle,sagastizabal2013-composite-prox-bundle,hare2015-prox-bundle-inexact-info,
monjezi2022-convg-prox-bundle-alg-nonsmooth-nonconvex}, 
quasi-Newton bundle \cite{luksan1998-bundle-newton-methd-nonsmooth-unconstr-min,maleknia2020-quasi-newton-bundle-grad-sampling}, 
and trust-region bundle methods \cite{monjezi2023-bundle-trust-region-locally-liptz-func}, etc.
In a proximal bundle method, it manages a bundle of subgradients (often obtained from previous iterations)  
to generate a convex piecewise linear local model, while a quadratic penalty term can be incorporated to control
the moving distance from the current point \cite{bagirov2014-bundle-method}. 
Recently the proximal bundle algorithm has been generalized with different assumptions on the objective
function \cite{hare2010-redistributed-prox-bundle,dao2016-nonconvex-bundle-delamination,
hare2015-prox-bundle-inexact-info,monjezi2022-convg-prox-bundle-alg-nonsmooth-nonconvex,
kiwie1996-restricted-step-prox-bundle}, 
which extended the application domain of this type of methods. 
Proximal bundle method has also been extended for handling nonsmooth, nonconvex constrained optimization
\cite{dao2015-bundle-method-non-non-constr,monjezi2019-infesb-prox-bundle-alg-nonsmooth-nonconvex-constr,
monjezi2021-filter-prox-bundle-non-non-constr,lv2017-prox-bundle-constr-non-non-inexact}.
In Quasi-Newton bundle methods \cite{mifflin1998-quasi-newton-bundle}, 
BFGS updating rules are used to approximate the inverse Hessian matrix
for deriving the search direction. In trust-region bundle methods \cite{monjezi2023-bundle-trust-region-locally-liptz-func}, 
the trust region management is used as an alternative to proximity control and 
it is a tighter control on the step size.

Gradient (subgradient) sampling (GS) methods 
\cite{lewis2005-grad-sampling-nonsmooth-nonconvex-opt,
curtis2015-quasi-newton-nonconvex-nonsmooth,
kiwiel2007-convg-grad-sampling-non-non-opt}
are in another category of effective methodology for nonsmooth optimization. 
The key difference between this type of methods and bundle methods 
is that the algorithm proactively draws samples in a neighborhood of the current point to access the subgradient
information nearby. As an advantage, the sampling technique can help collecting sufficient subgradient information
for improving the accuracy of the local model. However, it also takes additional computational time for sampling and selecting
useful subgradients in each iteration.  
Another side effect of a large sample size is that it leads to 
a high-complexity constrained quadratic program as a local model at every iteration, which is the most time-consuming step
in a contemporary GS method. Inexact subproblem solutions are proposed to mitigate this shortcoming 
\cite{curtis2021-grad-sampling-inexact-subproblem-solve}. 
The motivation of sampling technique is based on the assumption that 
the objective function can be non-differentiable at the current point. But if the point is in the domain of a smooth piece of the objective,
sampling is not necessary and it can save much computational effort. Since the algorithm is not informed by the macroscopic structural
information of the objective function, it will miss the opportunities of skipping the sampling procedure or reducing the sample size. 
As a consequence, it is difficult to establish a uniform rule of determining the an adaptive sample size  
based on the dimension of the problem and the performance of previous iterations. 
Gradient sampling methods are often coupled with quasi-Newton methods \cite{lewis2013-nonsmooth-opt-via-quasi-newton} 
for utilizing the neighborhood subgradient information in building a high-quality quadratic local model
\cite{curtis2015-quasi-newton-nonconvex-nonsmooth}.

An interesting research stream for nonsmooth nonconvex optimization in the recent decade 
is based on the assumption that the analytical form of the objective function is given and the
non-differentiability is only caused by operators of $\max()$, $\min()$, $|\cdot|$ and $(\cdot)^+$. 
Given this information,
techniques of sequential variable substitution and localizaiton have been developed to reformulate
the original unconstrained nonsmooth optimization problem into multiple constrained smooth optimization subproblems defined locally
\cite{griewank2013-stable-piecewise-linearization-gen-alg-diff,
griewank2016-first-sec-order-cond-piecewise-smooth,
fiege2019-alg-nonsmooth-succ-piecewise-linearization}.
A local subproblem is parameterized by a binary vector indicating the branch of each nonsmooth operator.
First and second order optimality conditions can be derived for the constrained local problem
\cite{griewank2016-first-sec-order-cond-piecewise-smooth}. Successive piecewise-linearization
algorithm has been developed for solving these local subproblems by exploiting the kink structure resulted 
by the branches of nonsmooth operators \cite{fiege2019-alg-nonsmooth-succ-piecewise-linearization,
griewank2016-lip-opt-graybox-piecewise-linearization}. The algorithm can 
be enhanced by the automatic differentiation technique in implementation \cite{fiege2017-alg-diff-piecewise-smooth-rob-opt}. 
Linear rate of convergence can be achieved using exact second-order information under the linear independent
kink qualification (LIKQ) \cite{griewank2019-relax-kink-qual-convg-rate-piecewise-smooth-func}.
There are a few shortcomings of this approach. First, adding auxiliary variables extends the dimension of 
an instance and increases complexity.
Second, it requires intensive reformulation process and artificial intervention,
which makes it difficult to implement for automatically transforming the original problem and solving nonsmooth
optimization problems at scale.

On the rate of convergence, Han and Lewis \cite{lewis2023-survey-descent} proposed a survey
descent algorithm that achieves a local linear rate of convergence for the family of max-of-smooth functions
with each component function being strongly convex. Mifflin et al. \cite{mifflin1998-quasi-newton-bundle}
has developed a quasi-Newton bundle method for nonsmooth convex optimization. The method consists
of a quasi-Newton outer iterations and using bundle method as inner iterations, and it has been shown
that the method achieves superlinear convergency in the outer iteration with certain assumptions and 
regularity conditions on the objective. However, the bundle method needs to take a sufficient amount of inner iterations
to find a solution with a certain accuracy level required by the outer iterations.
Charisopoulos and Davis \cite{charisopoulos2023-superlinear-convg-subgrad-sharp-semismooth} 
have recently developed an advanced subgradient method that achieves superlinear convergency 
for objective functions satisfying sharp growth and semi-smooth conditions
defined in the paper. The convergence rate of proximal bundle methods has
also been investigated in 
\cite{ruszczynski2017-convg-rate-bundle,diaz2023-opt-convg-rate-prox-bundle,
atenas2023-unified-analy-descent-bundle-convg-rate}.
We refer to the monograph \cite{cui-pang2021-modern-non-non-opt} for theory and algorithms
established for contemporary nonconvex nondifferentiable optimization.

In \tabref{tab:paper-compare}, we summarize the comparison of between related works and this paper
from a variety of perspectives.  

\begin{table}[p]
\floatpagestyle{empty}
\centering
\rotatebox{90}{
\begin{minipage}{\textheight}
\caption{Comparison of related works}
\label{tab:paper-compare}
\renewcommand{\arraystretch}{3}
\begin{tabular}{c|c|c|c|c|c}
\hline\hline
Literature & \makecell{Family of \\ nonsmooth functions} & \makecell{Requirement of \\ bundle points} & \makecell{Exploitation of \\ analytical structure} & \makecell{Parsing of \\ analytical expression} & \makecell{Convergence rate \\ analysis}  \\
\hline
\makecell{Griewank (2013) \cite{griewank2013-stable-piecewise-linearization-gen-alg-diff} \\
Fiege et. al. (2019) \cite{fiege2019-alg-nonsmooth-succ-piecewise-linearization} \\
Griewank et. al. (2016) \cite{griewank2016-lip-opt-graybox-piecewise-linearization}}
 & abs-smooth & \ding{55}  & \ding{51} & \makecell{case-by-case \\ reformulation \& lifting} & \ding{55}   \\
\hline
Kreimeier et. al. (2024) \cite{kreimeier2024-frank-wolfe-abs-smooth} & abs-smooth & \ding{55} & \ding{51} & \makecell{case-by-case \\ reformulation \& lifting} & \ding{55}  \\
\hline
\makecell{Griewank \& Walther  \\
(2016) \cite{griewank2016-first-sec-order-cond-piecewise-smooth}, 
(2019) \cite{griewank2019-relax-kink-qual-convg-rate-piecewise-smooth-func}}
& abs-smooth & \ding{55} & \ding{51} & \makecell{case-by-case \\ reformulation \& lifting} & \ding{51}  \\
\hline
\textbf{This paper} & piecewise smooth & \ding{51} & \ding{51}  & \makecell{automatic parsing during\\ function evalation } &  \ding{51} \\
\hline\hline
Literature & \makecell{Family of \\ nonsmooth functions} & \makecell{Bundle points \\ collection rule} & \makecell{Exploitation of \\ analytical structure} & \makecell{Using BFGS update} & \makecell{Convergence rate \\ analysis}  \\
\hline
\makecell{Burke et. al. (2005) \cite{lewis2005-grad-sampling-nonsmooth-nonconvex-opt} \\
Kiwiel (2007) \cite{kiwiel2007-convg-grad-sampling-non-non-opt} 
} & locally Lipschitz & random sampling & \ding{55} & \ding{51} & \ding{55}  \\
\hline
\makecell{
Lewis and Overton (2013) \cite{lewis2013-nonsmooth-opt-via-quasi-newton} \\
Curtis and Que (2015) \cite{curtis2015-quasi-newton-nonconvex-nonsmooth} \\
Curtis and Li (2021) \cite{curtis2021-grad-sampling-inexact-subproblem-solve} 
} & locally Lipschitz & random sampling & \ding{55} & \ding{51} & \ding{55}  \\
\hline
\makecell{
 Du and Ruszczy{\'n}ski (2017) \cite{ruszczynski2017-convg-rate-bundle} \\
 D{\'i}az and Grimmer (2023) \cite{diaz2023-opt-convg-rate-prox-bundle}
 Atenas et. al. (2023) \cite{atenas2023-unified-analy-descent-bundle-convg-rate}
} & locally Lipschitz & deterministic & \ding{55} & \ding{55} & \ding{51}  \\
\hline
\makecell{
 Han and Lewis (2023) \cite{lewis2023-survey-descent} \\
 Charisopoulos and Davis (2024) \cite{charisopoulos2023-superlinear-convg-subgrad-sharp-semismooth} 
} & locally Lipschitz & \ding{55} & \ding{55} & \ding{55} & \ding{51}  \\
\hline
\makecell{
Mahdavi-Amiri \& Yousefpour (2012) \cite{amiri2012-eff-nonsmooth-opt-loc-lip} 
\\
Gebken and Peitz (2021) \cite{gebken2021-eff-desc-methd-loc-lip}
}
 & locally Lipschitz & deterministic & \ding{55} & \ding{55} & \ding{55}  \\
\hline
\textbf{This paper}
 & piecewise smooth & \makecell{deterministic via \\ branch information} & \ding{51} & \ding{55} & \ding{51}  \\
\hline
\end{tabular}
\end{minipage}
}
\end{table}

\section{Encoding of piecewise-differentiable functions}\label{sec:prelim}
We begin by giving a formal definition of piecewise-differentiable functions that 
are analyzed in this paper. 
\begin{definition}\label{def:piecewise-diff}
A continuous function $f:\mR^n\to\mR$ is \textit{piecewise-differentiable} if there exists a finite set of functions 
$\{f_\theta:\;\theta\in\Theta\}$ satisfying the following conditions:
\begin{enumerate}
	\item The function $f_\theta: \tD_\theta\to\mR$ is continuously differentiable on the open domain $\tD_\theta\subseteq\mR^n$.
	\item For any $x\in\mR^n$, there exists a $\theta\in\Theta$ depending on $x$ such that $x\in\tD_\theta$ and $f(x) = f_\theta(x)$.
	\item For any $x\in\mR^n$, if $f_\theta(x)=f(x)$, then for any $\epsilon>0$, there exists an open set $C\subseteq\tD_\theta$ such that 
		$\overline{C}\subseteq\mB(x,\epsilon)$, $f(y)=f_\theta(y)\;\forall y\in{C}$ but
		$f(y)\neq{f}_{\theta^\prime}(y)\;\forall \theta^\prime\neq\theta\; \forall y\in{C}\cap\tD_{\theta^\prime}$, where $\mB(x,\epsilon)$ is the open ball with radius $\epsilon$ centered at $x$. 
	\item For any $x\in\mR^n$, there exists a small neighborhood $U$ of $x$ such that $\cM(y)\subseteq\cM(x),\;\forall{y}\in{U}$.
	\item If $f$ is differentiable at $x$, then $\nabla{f}(x)=\nabla{f}_\theta(x)$ for every $\theta$ satisfying $x\in\tD_\theta$ 
		 and $f(x)=f_\theta(x)$.
\end{enumerate}
\end{definition}
For a piecewise-differentiable function $f$, 
$\Theta$ is the set of \textit{branches}, $f_\theta$ is called the \textit{branch function} indexed by $\theta\in\Theta$,
$\tM(x):=\{\theta\in\Theta:\;x\in\tD_\theta\}$ is the set of \textit{feasible branches} at $x$,
and $\cM(x):=\{\theta\in\Theta: x\in\tD_\theta,\; f_\theta(x)=f(x)\}$ is set of \textit{active branches} at $x$.
The subset $\cD_\theta:=\{x:\theta\in\cM(x)\}$ is called the \textit{active domain} of the
branch $\theta$ (assuming $\cD_\theta\subseteq\tD_\theta$).
Note that one has $\cM(x)\subseteq\tM(x)$ by definition.
The \textit{multiplicity} of $f$ at $x\in\mR^n$ is the cardinality of $\cM(x)$.

To provide insights into \defref{def:piecewise-diff}, let us analyze its four five conditions:
Condition 1 ensures that each branch function is differentiable on its domain.
Condition 2 states that the value of $f$ at $x$ is determined by at least one branch function.
Condition 3 requires that every active branch at $x$ has a \textit{characteristic region} within 
any arbitrarily small neighborhood of $x$. Here, a \textit{characteristic region} of a branch $\theta$
is the set where $\theta$ is the only active branch. 
This condition plays a crucial role in preventing branch redundancy. Specifically, if a branch $\theta\in\cM(x)$
violates this condition, there must exist an arbitrarily small open neighborhood $C$ of $x$ such that for
every $y\in{C}$, some other branch $\theta^\prime$ (depending on $y$) satisfies $f_\theta(y)=f_{\theta^\prime}(y)$.
Intuitively, this means $\theta$ is redundant near $x$, and its domain can be refined--by redefining
$\tD_\theta\gets\tD_\theta\setminus\overline{C}$--without altering the function’s behavior.
Condition 4 ensures that the set of active branches in a sufficiently small neighborhood of $x$ 
can only be a subset of $\cM(x)$. Without this condition, it is possible to construct a branch function $f_\theta$
with domain $\tD_\theta$ in the way that $x$ is at the boundary of $\tD_\theta$ but the function value 
satisfies $\lim_{y\to{x}}f_\theta(y)=f(x)$ and there exists a sequence $\{x_n\}$ satisfying $\theta\in\cM(x_n)$. 
In this case, the branch $\theta$ has active points in any sufficiently small neighborhood of $x$, but $x$ as the limit
point of $\{x_n\}$ is not in $\tD_\theta$. This ill situation can be resolved by adjusting the definition of $f_\theta$ and $\tD_\theta$.
Condition 5 states that if $x$ is at a characteristic region of a branch $\theta$, i.e., $\cM(x)=\{\theta\}$,
then $\nabla{f}(x)=\nabla{f}_\theta(x)$.


\begin{definition}[encodablility]\label{def:encodablility}
A piecewise-differentiable function $f$ is \textit{encodable} if for any $x\in\mR^n$, the 
set $\cM(x)$ of active branches is accessible and the gradient $\nabla{f}_\theta(x)$ is accessible
for any $\theta\in\cM(x)$.
\end{definition}
We emphasize that encodability constitutes an algorithmic property of piecewise-differentiable functions, 
which relies on having access to $f$'s analytical expression or numerical representation. 
This property enables exploitation of $f$'s analytical structure. 
Crucially, determining $\cM(x)$ requires knowledge of the branching rules for each nonsmooth operator in $f$. 
In general, a piecewise-differentiable function cannot be encoded if only its function values are accessible.

\begin{definition}[Clarke differential]
Given a function $f:\mR^n\to\mR$, for every $x$, the Clarke differential $\partial{f}(x)$ is defined as:
\beq\label{def:clarke-diff}
\partial{f(x)}:=\conv(\{s\in\mR^n:\exists\{x_i\}^\infty_{i=1}\;\textrm{s.t. }x_i\to{x},\;\nabla{f(x_i)}\to{s}\}).
\eeq 
\end{definition}

\begin{assumption}
Assume that each branch function $f_\theta$ of the encodable function $(f,\cM)$
is $L_\Theta$-smooth for a uniform Lipschitz coefficient $L_\Theta$. 
\end{assumption}

Given an encodable function $(f,\cM)$, a point $x$ and a nonempty subset $\cC$ of $\Theta$,
we will frequently refer to the following quadratic program:
\begin{equation}
\min_{\lambda}\;\norm{\sum_{\theta\in\cC}\lambda_\theta\nabla f_\theta(x)}^2 \quad \textrm{s.t. }\;\sum_{\theta\in\cC}\lambda_\theta=1,\quad\lambda_\theta\ge0. \label{opt:QP-x-C}  \tag{\textrm{QP-$x$-$\mathcal{C}$}}
\end{equation}
Denote $\sol{\QPt{x}{\cC}}$ as the optimal solution $\lambda^*$ of $(\QPt{x}{\cC})$, 
and $\vc{\QPt{x}{\cC}}$ as the optimal vector $d^*=\sum_{\theta\in\cC}\lambda^*_\theta\nabla f_\theta(x)$. 
This vector is referred as the \textit{joint gradient} of branch functions in $\cC$ at the point $x$.
Joint gradients are intensively used in \secref{sec:CID-JGD} to develop an algorithm that can achieve
linear convergence. A similar notion has been used in the gradient sampling method 
\cite{lewis2005-grad-sampling-nonsmooth-nonconvex-opt} for optimizing locally-Lipschitz
objectives. The difference is that in \cite{lewis2005-grad-sampling-nonsmooth-nonconvex-opt}, 
the gradients are taken at the sampled points.

\begin{table}
	\centering
	\begin{tabular}{cl}
		\hline\hline
		Notation & Definition \\
		\hline
		 $\Theta$ & the set of codes corresponding to all branch functions of $f$ \\
		 $\cM$ & the multi-valued mapping from $\mR^n$ to $\Theta$ (\defref{def:encodablility}) \\
		 $\tD_\theta$ & domain of the branch function $f_\theta$ for $\theta\in\Theta$ (\defref{def:encodablility}) \\
		 $\cD_\theta$ & the active domain of the branch function $f_\theta$ for $\theta\in\Theta$ (\defref{def:encodablility}) \\
		 $\tM(x)$ & $\theta\in\tM(x)\Longleftrightarrow x\in\tD_\theta$ (\defref{def:encodablility}) \\
		 $L_\Theta$ & the common Lipschitz constant of $\nabla f_\theta$ for all $\theta\in\Theta$ \\
		 $\mB(x,r)$ & the open ball of radius $r$ at the center $x$ \\
		 $G_f$ & an upper bound of $\norm{\nabla{f}_\theta(x)}$ for all $x\in\X$ and $\theta\in\Theta$, \\
		 	& where $X$ is a sufficiently large closed set that contains all points of interest \\
		\hline
		$\mudec$ & decreasing ratio of the trust-region size and trust-step size 
		(\algoref{alg:trust-region-nonconvex}) \\
		$\rho_0$ & the threshold of decrease rate for a successful move (\algoref{alg:trust-region-nonconvex} and \algoref{alg:global_and_local_converg}) \\
		$\Thetadisc$ & a dynamic set of branch codes that has been discovered so far in \algoref{alg:trust-region-nonconvex} \\
		$\gamma$ & an internal parameter used in \algoref{alg:global_and_local_converg} for measuring gap reduction \\
		$z_\theta$ & the representative point corresponding to the branch $\theta$ for $\theta\in\Thetadisc$ (\algoref{alg:trust-region-nonconvex})\\
		$\cT(x,r)$ & defined as $\{\theta\in\Thetadisc:\norm{x-z_\theta}\le r\}$ (\algoref{alg:trust-region-nonconvex}) \\
		\hline
	\end{tabular}
	\caption{List of notations used in the paper}\label{tab:notations}
\end{table}

In the following, we provide two examples to illustrate the notions defined above and the motivation
of functional encoding.
\begin{example}\label{exp:basic-concept}
Consider the function $f(x)=\max\big\{-x+1,\;\frac{1}{4}x,\;x-6\big\}$. It has a non-smooth operator $\max(\cdot)$,
which splits $f$ into three branch functions: $f_1(x):=-x+1$, $f_2(x)=\frac{1}{4}x$ and $f_3(x)=x-6$
with the branch code $\theta=1,2,3$, respectively. Consider five points $x_1=0$, $x_2=\frac{4}{5}$,
$x_3=4$, $x_4=8$ and $x_5=10$. The $x_1$ makes $f_1$ active and $x_2$ makes both $f_1,f_2$ active, etc.
Based on \defref{def:encodablility}, the multi-valued mapping $\cM$ maps $x_i$ ($i=1,2,3,4,5$) to the following
set values:
\bdm
\cM(x_1)=\{1\},\;\cM(x_2)=\{1,2\},\;\cM(x_3)=\{2\},\;\cM(x_4)=\{2,3\},\;\cM(x_5)=\{3\}.
\edm
The active domain $\cD_\theta$ is given by
\bdm
\cD_1=\Big(-\infty,\frac{4}{5}\Big],\;\cD_2=\Big[\frac{4}{5},8\Big],\;\cD_3=\Big[8,\infty\Big).
\edm
\end{example}

\begin{example}[\cite{haarala2004-mem-bundle-nonsmooth-opt}]\label{exp:branching}
Chained Crescent II (enhanced)
\beq
f(x)=\sum^{n-1}_{i=1}\max\{x^2_i+(x_{i+1}-1)^2+|x_{i+1}-1|,-x^2_i-(x_{i+1}-1)^2+|x_{i+1}+1|\}.
\eeq
The function contains $3(n-1)$ non-smooth operators: the $\max\{\cdot\}$,
the absolute operator $|\cdot|$ of $|x_{i+1}-1|$ and the absolute operator $|\cdot|$ 
of $|x_{i+1}+1|$ in each term in the summation. The function can be encoded as 
\beq
\theta=\bigoplus^{n-1}_{i=1}(\theta^i_1,\theta^i_2,\theta^i_3),
\eeq
where $\theta^i_1,\theta^i_2,\theta^i_3\in\{1,2\}$ respectively encodes the branch of the three operators
in the $i$-th term of the summation. \figref{fig:branching} illustrates the functional encoding and branching
scheme of the three non-smooth operators in the $i$-th term of the summation. In particular, the code 
$(\theta^i_1,\theta^i_2,\theta^i_3)=(1,2,1)$ corresponds to the resulting term $x^2_i+(x_{i+1}-1)^2-x_{i+1}+1$,
while the code $(\theta^i_1,\theta^i_2,\theta^i_3)=(1,2,2)$ corresponds to the same resulting term, but
they have different active domains. Once the values of $(\theta^i_1,\theta^i_2,\theta^i_3)$ are specified 
for all $i\in\{1,\ldots,n-1\}$, we obtain a specific branch function corresponding to the code $\theta$.
\end{example}
A key observation from the example above is that determining the active branch codes of $f$ at a point
$x$ is a natural by-product of function evaluation, since computing $f(x)$ already requires identifying the
active branch of each nonsmooth operator. Therefore, functional encoding does not introduce additional
computational overhead. 

\begin{figure}
\centering
    \includegraphics[width=1.0\linewidth,trim=1cm 18.5cm 0.5cm 5cm]{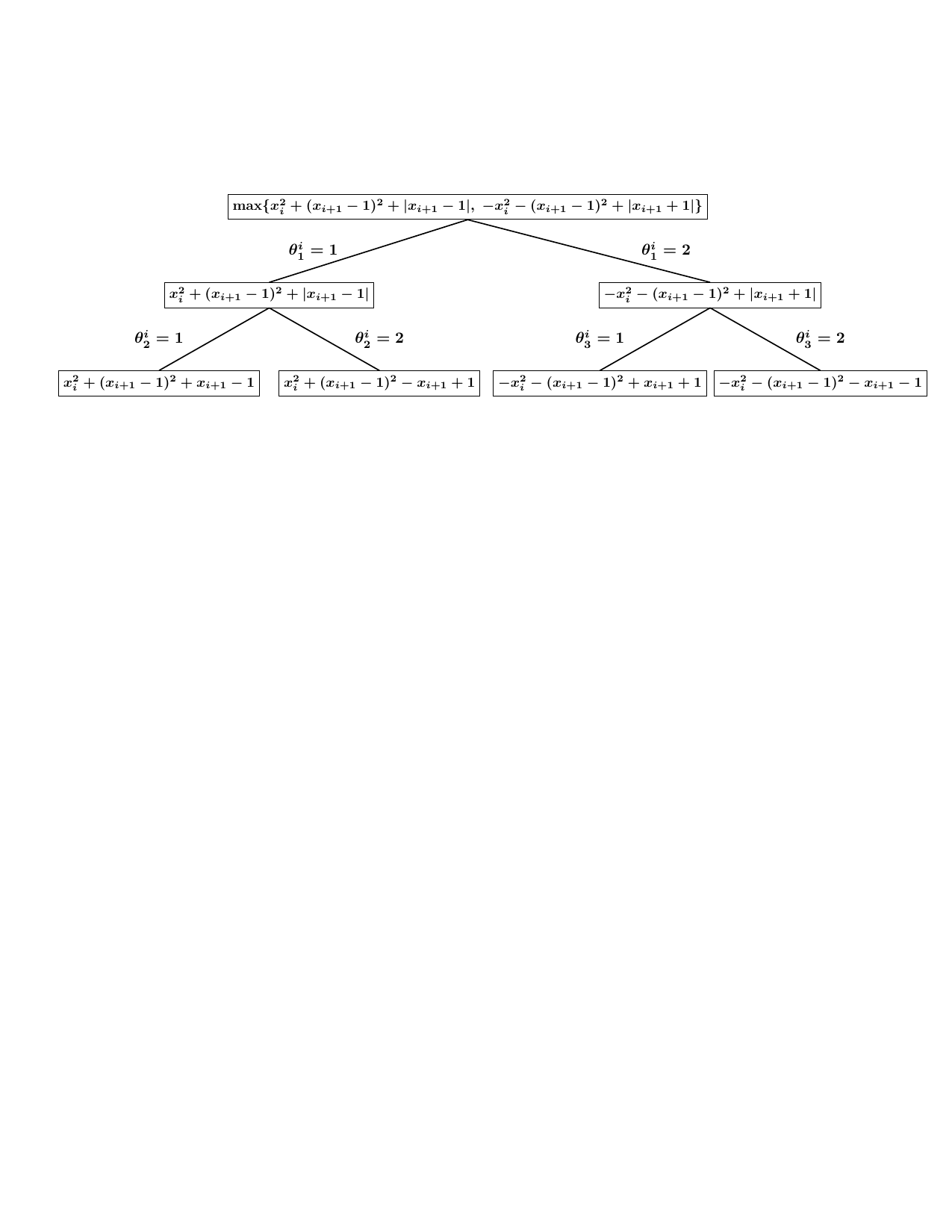}
    \caption{Non-smooth operator branching scheme of the function in \expref{exp:branching}.}
    \label{fig:branching}
\end{figure} 

\begin{example}
Consider a piecewise-differentiable function $f$ defined using the following branching rule:
\begin{equation}
f(x) =\begin{cases}
	-0.5x^2 + 2x &\text{if } x\in(0,2], \\
	0.5x^2 - 4x + 8 &\text{if } x\in(2,4], \\
	1.5x - 6 &\text{if } x\in(4,6).
	\end{cases}
\end{equation}
See \figref{fig:rule-based-piecewise-func} for the illustration. 
Let $\Theta=\{1,2,3\}$. Let the three branch functions be $f_1(x)=-0.5x^2 + 2x$, 
$f_2(x)=0.5x^2 - 4x + 8$ and $f_3(x)=1.5x - 6$. Their domains are  
$\tD_1=(0,3)$, $\tD_2=(1,5)$ and $\tD_3=(3,6)$, respectively. This definition 
is not unique. Their active domains
are $\cD_1=(0,2]$, $\cD_2=[2,4]$ and $\cD_3=[4,6)$, respectively. 
In this example, $f$ is not an abs-smooth function as in the previous two examples, but it is
still encodable by \defref{def:piecewise-diff}. This example points out that the family
of nonsmooth functions investigated in this work is broader than abs-smooth functions
handled in \cite{griewank2016-first-sec-order-cond-piecewise-smooth,griewank2019-relax-kink-qual-convg-rate-piecewise-smooth-func}. 
\end{example}
\begin{figure}
\centering
    \includegraphics[scale=1.2]{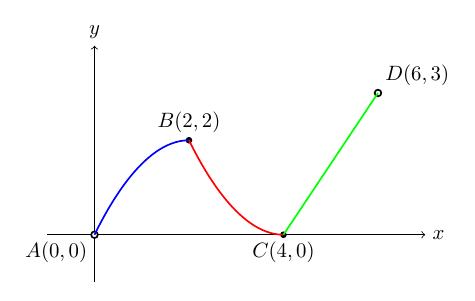}
    \caption{A rule-based piecewise-differentiable function that is encodable.}
    \label{fig:rule-based-piecewise-func}
\end{figure}

Our methods are particularly effective for problem instances where the number of branches 
near local minimizers remains relatively small--specifically, 
cases where the branch count does not grow exponentially with problem dimension. 
For example, consider objectives of the form
$f(x)=g(x) + \norm{x}_1$, where the $\ell_1$-norm $\norm{x}_1$ induces $2^n$ branches (for dimension $n$)
at the origin, while $g(\cdot)$ has only a constant number of branches (independent of $n$).
In such cases, the $\ell_1$-norm term can be moved into the subproblem that computes 
the search direction (as developed in this paper) and handled via standard techniques.

For the case that the multiplicity at $x$ exponentially depends on $n$, the practical implementation
of our methods only need to access an arbitrary unvisited branch from $\cM(x)$ rather than building the entire set $\cM(x)$.
Furthermore, while the issue of exponentially many branch functions near minimizers poses a challenge 
for all nonsmooth optimization methods, it manifests differently across approaches. 
In conventional methods, this challenge remains implicit, whereas our method explicitly addresses it. 
Crucially, our computational results show that even in such demanding cases--where the number 
of branch functions grows exponentially near minimizers--our method consistently outperforms competing approaches.

\begin{proposition}
For a piecewise-differentiable function $f$, the Clarke differential has the following simplified representation:
\beq
\partial f(x) = \conv(\{\nabla f_\theta(x): \theta\in\cM(x)\}).
\eeq
\end{proposition}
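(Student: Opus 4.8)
The plan is to prove the sharper fact that the raw gradient-limit set appearing in the Clarke construction already equals $\{\nabla f_\theta(x):\theta\in\cM(x)\}$, and then pass to convex hulls. Write
\[
G(x) := \{s \in \mR^n : \exists\,\{x_i\}_{i=1}^\infty \text{ s.t. } x_i \to x,\ \nabla f(x_i) \to s\},
\]
so that $\partial f(x) = \conv(G(x))$ by \eqref{def:clarke-diff}, and set $B(x):=\{\nabla f_\theta(x):\theta\in\cM(x)\}$, a \emph{finite} set since $\Theta$ is finite. I would show $G(x)=B(x)$ by two inclusions; taking convex hulls then gives $\partial f(x)=\conv(G(x))=\conv(B(x))$, which is the claim.

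For $B(x)\subseteq G(x)$, fix $\theta\in\cM(x)$ and apply Condition 3 of \defref{def:piecewise-diff} with $\epsilon=1/i$ to obtain an open characteristic region $C_i\subseteq\tD_\theta$ with $\overline{C_i}\subseteq\mB(x,1/i)$ on which $f\equiv f_\theta$. Choosing any $x_i\in C_i$ forces $x_i\to x$. Since $f$ coincides with the $C^1$ function $f_\theta$ on the open set $C_i$, $f$ is differentiable at $x_i$ with $\nabla f(x_i)=\nabla f_\theta(x_i)$, and continuity of $\nabla f_\theta$ on $\tD_\theta$ (Condition 1) yields $\nabla f(x_i)\to\nabla f_\theta(x)$. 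Hence $\nabla f_\theta(x)\in G(x)$.

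For the reverse inclusion $G(x)\subseteq B(x)$, take $s\in G(x)$ with witnesses $x_i\to x$, each a point of differentiability of $f$, and $\nabla f(x_i)\to s$. Condition 4 supplies a neighborhood $U$ of $x$ with $\cM(y)\subseteq\cM(x)$ for all $y\in U$, so for all large $i$ we have $\emptyset\neq\cM(x_i)\subseteq\cM(x)$, nonemptiness coming from Condition 2. Pick $\theta_i\in\cM(x_i)$; because $x_i$ is a differentiability point, Condition 5 gives $\nabla f(x_i)=\nabla f_{\theta_i}(x_i)$. Finiteness of $\cM(x)$ then lets me extract a subsequence along which $\theta_i\equiv\theta$ for a fixed $\theta\in\cM(x)$, and along it $\nabla f(x_i)=\nabla f_\theta(x_i)\to\nabla f_\theta(x)$ by continuity, while also $\nabla f(x_i)\to s$; therefore $s=\nabla f_\theta(x)\in B(x)$.

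The step I expect to be most delicate is this reverse inclusion: one must verify that each differentiability point $x_i$ used in the Clarke limit can legitimately be tied to an \emph{active} branch in $\cM(x_i)$, which requires chaining nonemptiness (Condition 2), the containment $\cM(x_i)\subseteq\cM(x)$ (Condition 4), and the gradient-matching identity (Condition 5, applicable precisely because $x_i$ is a point of differentiability) before the pigeonhole over the finite set $\cM(x)$ can be invoked. The forward inclusion is comparatively routine once the characteristic regions of Condition 3 are used to manufacture approximating differentiability points. Combining both inclusions gives $G(x)=B(x)$, and hence $\partial f(x)=\conv(G(x))=\conv(B(x))$.
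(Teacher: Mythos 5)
Your proof is correct and follows essentially the same route as the paper's: the forward inclusion via the characteristic regions of Condition 3 (which yield differentiability points with $\nabla f=\nabla f_\theta$ approaching $x$), and the reverse inclusion via pigeonholing a branch over a finite set, Condition 5 at differentiability points, and continuity of $\nabla f_\theta$, followed by taking convex hulls. One small remark: where you invoke Condition 4 to obtain $\cM(x_i)\subseteq\cM(x)$ for large $i$, the paper cites Condition 3 at the corresponding step; your citation is the correct one (the paper's appears to be a typo), so your write-up is in fact slightly cleaner on that point.
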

\begin{proof}
Based on the definition of $\partial{f}$ in \eqref{def:clarke-diff} and the condition 3 in \defref{def:piecewise-diff}, 
we conclude that $\conv(\{\nabla f_\theta(x): \theta\in\cM(x)\})\subseteq\partial f(x)$.
It suffices to prove $\partial f(x)\subseteq\conv(\{\nabla f_\theta(x): \theta\in\cM(x)\})$.
Consider any sequence $x_i\to{x}$ and $f$ is differentiable at $x_i$ with the limit
$\nabla{f}(x_i)\to{s}$. Since $\Theta$ is finite, there exists a branch $\theta$ such that the event
$\theta\in\cM(x_i)$ occurs infinite time as $i\to\infty$. Condition 3 in \defref{def:piecewise-diff}
implies $\theta\in\cM(x)$.
Then it is possible to find a subsequence
$\{i_k\}^\infty_{k=1}$ such that $\theta\in\cM(x_{i_k})$. Using the condition 5 in \defref{def:piecewise-diff}
and the continuity of $\nabla{f}_\theta$, we conclude that the following limits hold:
\bdm
\ba
s=\lim_k\nabla{f}(x_{i_k}) = \lim_k\nabla f_\theta(x_{i_k}) = \nabla f_\theta(x).
\ea
\edm
This proves $\partial f(x)\subseteq\conv(\{\nabla f_\theta(x): \theta\in\cM(x)\})$.
\end{proof}

\begin{corollary}
Given a piecewise-smooth function $f$, $x$ is a Clarke stationary point of $f$ if and only if
if $\bs{0}\in\conv(\{\nabla f_\theta(x): \theta\in\cM(x)\})$.
\end{corollary}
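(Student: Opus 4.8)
The plan is to obtain this statement as an immediate consequence of the preceding proposition, which already identifies the Clarke differential with the convex hull of active branch gradients. The only ingredient not yet made explicit is the definition of Clarke stationarity, so I would begin by recalling that a point $x$ is a \emph{Clarke stationary point} of $f$ precisely when $\bs{0}\in\partial f(x)$; this is the standard first-order optimality condition for locally Lipschitz functions and requires no separate justification here.

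With that definition in hand, the argument reduces to a single substitution. By the proposition,
\bdm
\partial f(x)=\conv(\{\nabla f_\theta(x):\theta\in\cM(x)\}),
\edm
so the membership $\bs{0}\in\partial f(x)$ is logically equivalent to $\bs{0}\in\conv(\{\nabla f_\theta(x):\theta\in\cM(x)\})$. Chaining the definition of Clarke stationarity with this equality yields both directions of the ``if and only if'' simultaneously, which completes the proof.

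There is essentially no obstacle to overcome, since the substantive work was already carried out in the proposition, whose proof established both inclusions using conditions 3 and 5 of \defref{def:piecewise-diff} together with the finiteness of $\Theta$. The one point worth a moment's verification is that the set on the right-hand side is genuinely closed, so that the Clarke differential (closed by construction) equals it rather than merely its closure; because $\cM(x)$ is finite, this convex hull is a polytope and hence automatically closed, so no extra care is needed. I would therefore present the corollary's proof as a two-line derivation rather than revisiting any of the limiting arguments used for the Clarke differential itself.
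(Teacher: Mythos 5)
Your proof is correct and matches the paper's reasoning exactly: the paper states this corollary without a separate proof, treating it as the immediate substitution of the proposition's identity $\partial f(x)=\conv(\{\nabla f_\theta(x):\theta\in\cM(x)\})$ into the definition of Clarke stationarity, $\bs{0}\in\partial f(x)$. Your remark on closedness of the finite convex hull is a sound (if unneeded) extra check.
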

The algorithmic insight of the above corollary is that the validation
of Clarke stationarity can be restricted to only concerning the set of active branch functions
at a point. 

\section{A gradient descent method with branch information}\label{sec:CTI-TR}
We developed a branch-information-driven gradient descent method (BIGD) 
to optimize an encodable piecewise-differentiable objective function.
The method is presented in \algoref{alg:trust-region-nonconvex}.
The key innovation of our approach lies in dynamically maintaining a set $\Thetadisc$ of branches that have been visited
by the algorithm and a corresponding set $\cZ^n=\{z^n_\theta: \theta\in\Thetadisc\}$ of representative points in the
active domain of these branches. Here, each $z^n_\theta\in\cD_\theta$ serves as the unique representative
point for branch $\theta$ at iteration $n$, which may be updated in subsequent iterations.
During optimization, $\Thetadisc$ and $\cZ^n$ are leveraged to generate a candidate set of 
descent directions. These directions are then evaluated by a customized line-search method within our 
framework to select the most promising descent direction and step size at each iteration $n$.

The algorithm consists of outer and inner iterations. The outer iteration (indexed by $n$ in the algorithm) 
generates the sequence $\{x_n\}$ of points which converges to a Clarke stationary point. All inner iterations 
(indexed by $k$ in the algorithm) within each outer iteration are designed to test the efficiency of candidate descent directions. 
There are three functions involved in an inner iteration:
\Call{BranchSelection}{}, \Call{GradientComputation}{} and \Call{LineSearch}{}. The function \Call{BranchSelection}{}
selects a subset $\cT^n_k$ of branches based on the distance from $x^n$ to their representative points $z^n_\theta$. 
The function \Call{GradientComputation}{} computes a candidate gradient ascent direction $d^n_k$ (or a gradient descent direction $-d^n_k$)
by solving the quadratic program \eqref{opt:qp-sum-lambda-grad-f} that depends on $\cT^n_k$ and $\cZ^n$.
After that, the \Call{LineSearch}{} function computes a step size along $-d^n_k$. Within an outer iteration,
the parameter $\eta$ represents the largest decrease of the objective value in the inner iterations. 
The $\alpha$ and $d$ represent the step size and gradient  direction associate to $\eta$. 
Note that all the trial points generated in the line search are also used to update $\Thetadisc$ and $\cZ^n$. 

\begin{figure}[htbp]
    \centering
    \begin{subfigure}{0.49\textwidth}
        \centering
        \includegraphics[width=\linewidth]{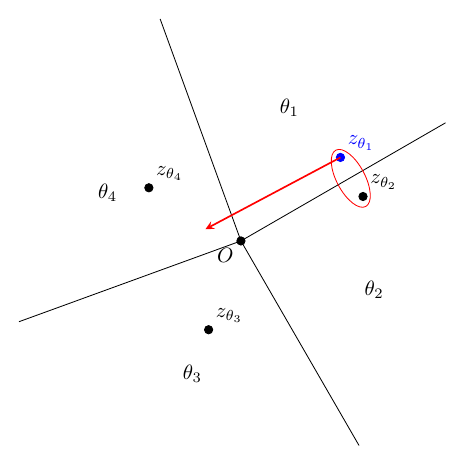}
        \caption{}
        \label{fig:a}
    \end{subfigure}
    \hfill
    \begin{subfigure}{0.49\textwidth}
        \centering
        \includegraphics[width=\linewidth]{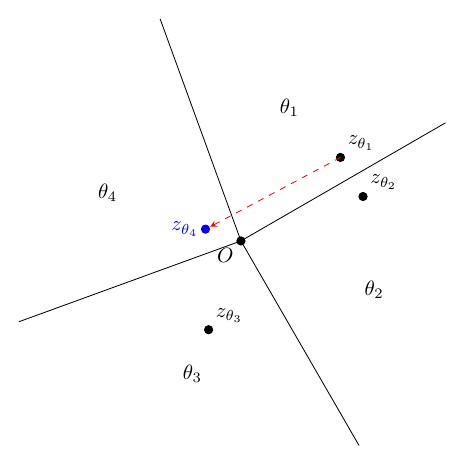}
        \caption{}
        \label{fig:b}
    \end{subfigure}
    \caption{An illustration of \algoref{alg:trust-region-nonconvex}:
    Figure (a) shows that there are four branches $\theta_1$, $\theta_2$, $\theta_3$ and $\theta_4$ near a local minimizer $O$.
    Their representative points are $z_{\theta_1}$, $z_{\theta_2}$, $z_{\theta_3}$ and $z_{\theta_4}$, respectively, which are visited 
    by the algorithm. Suppose $x=z_{\theta_1}$ is the current point, and branches $\theta_1$ and $\theta_2$
    are selected to compute the descent direction. After the line search along this direction (red arrow), 
    the point moves from $z_{\theta_1}$ to a point in $\tD_{\theta_4}$. Figure (b) shows that the representative point of $\theta_4$
    has been updated after the move. }
    \label{fig:both}
\end{figure}

\begin{algorithm}
{\footnotesize
	\caption{\scriptsize A branch-information-driven gradient descent method (BIGD).} \label{alg:trust-region-nonconvex}
	\begin{algorithmic}[1]
			\State{Initialization: $x^1\gets{x}_\textrm{init}$, $n\gets 1$ and $\Thetadisc\gets\cM(x^1)$.}
			\While{$n<\infty$} \label{lin:outer-iter}
				\State{Let $\cZ^n=\cup_{\theta\in\cT(x^n,r_0)}\{z_\theta\}$. (See \tabref{tab:notations} for the definition of $\cT(\cdot,\cdot)$.) }
				\State{Set $k\gets1$ and $\eta\gets0$.}
				\While{$k\le|\cZ^n|$} \label{lin:update_k}
					\State{Let $\cT^n_k,r=\Call{BranchSelection}{\cZ^n,x^n,k}$}
					\State{Let $d^n_k=\Call{GradientComputation}{\cT^n_k,\cZ^n}$.}
					\State{\textbf{If }$\norm{d^n_k}=0$, \textbf{continue}.}
					\State{$\beta,\eta,signal=\Call{LineSearch}{x^n,d^n_k,r,\eta,k}$.} \label{lin:call-line-search}
					\If{$signal=1$}
						\State{Let $\alpha\gets\beta$, $d\gets{d^n_k}$.}
					\EndIf
					\State{Set $k\gets k+1$.}
				\EndWhile
			\State{Let $\alpha_n\gets\alpha$, $d^n\gets{d}$ and $x^{n+1}=x^n-\alpha_nd^n$.} \label{lin:x^n_update}
			\State{\Call{TerminationCheck}{$x^{n+1}$}.}
			\State{Set $n\gets{n+1}$.}
			\EndWhile
	\end{algorithmic}
	\vspace{-5pt}\noindent\makebox[\linewidth]{\rule{\linewidth}{0.4pt}}
	\begin{algorithmic}[1]
		\Function{BranchSelection}{$\cZ,x,k$}
			\State{Within the set $\cZ$, select the $k$ distinct nearest points $z_1,\ldots,z_k$
					 from $x$, breaking tie arbitrarily.} \label{lin:k-nearest}
			\State{Let $\cT=\cup^k_{i=1}\cM(z_i)$ and let $r\gets\norm{z_k-x}$.} \label{lin:r_k}
			\State{\textbf{return} $\cT, r$.}
		\EndFunction
	\end{algorithmic}
	\vspace{-5pt}\noindent\makebox[\linewidth]{\rule{\linewidth}{0.4pt}}
	\begin{algorithmic}[1]
		\Function{GradientComputation}{$\cT,\cZ$}
					\State{Solve the following quadratic program:}
					\beq\label{opt:qp-sum-lambda-grad-f}
					\min_\lambda \norm{\sum_{\theta\in\cT}\lambda_\theta\nabla f_\theta(z_\theta)}^2
					\;\textrm{ s.t. } \sum_{\theta\in\cT}\lambda_\theta = 1,\;\lambda_\theta\ge 0\;\forall\theta\in\cT,
					\eeq
					\State{where $z_\theta$ is fetched from $\cZ$ for the branch $\theta$.
					Let $\lambda^*$ be the optimal solution of the above problem, and let 
					$d^*=\sum_{\theta\in\cT}\lambda^*_\theta\nabla f_\theta(z_\theta)$.}
				\State{\textbf{return} $d^*$.}
		 \EndFunction
	\end{algorithmic}
	\vspace{-5pt}\noindent\makebox[\linewidth]{\rule{\linewidth}{0.4pt}}
	\begin{algorithmic}[1]
		\Function{LineSearch}{$x,d,r,\eta,k$}
			\State{Parameters: $r_0>0$, $\alphamax>0$, $\rho_0\in(0,1)$ and $\mudec\in(0,1)$.}
			\State{Set $t\gets 1$, $\alpha_{t}\gets\min\{\alphamax,r_0/\norm{d}\}$, $flag\gets 0$ and $signal\gets 0$.}
					\While{$flag=0$}\label{lin:while-loop-flag}
						\State{Compute the following ratio: $\rho = [f(x) - f(x-\alpha_{t}d)]/(\alpha_{t}\norm{d}^2)$.}
						\If{$\rho\ge\rho_0$} \label{lin:rho^n_k>=rho_0}
							\If{$f(x) - f(x-\alpha_{t}d)>\eta$}
								\State{Set $\eta\gets{f}(x) - f(x-\alpha_{t}d)$.} \label{lin:x(n+1)=x(n)-ad}
								\State{Set $signal\gets1$.} \label{lin:flag_1}
							\EndIf
							\State{Set $flag\gets 1$.}
							\State{\Call{BranchPointUpdate}{$x-\alpha_td$, $x-\alpha_td$}.}\label{lin:update1}
						\Else
							\State{\Call{BranchPointUpdate}{$x$, $x-\alpha_t d$}.}\label{lin:update2}
							\If{$k=1$ or ($k\ge 2$ and $\norm{\alpha_t d}>r$)} \label{lin:k=1_or_k>=2}
								\State{Set $\alpha_{t+1}\gets\mudec\alpha_t$ and $t\gets t+1$.}\label{lin:alpha-dec}
							\Else
								\State{Set $flag\gets 1$.} \label{lin:flag_2}
							\EndIf
						\EndIf
					\EndWhile	
			\State{\textbf{return} $\alpha_t,\eta,signal$.}
		\EndFunction
	\end{algorithmic}
	\vspace{-5pt}\noindent\makebox[\linewidth]{\rule{\linewidth}{0.4pt}}
	\begin{algorithmic}[1]
		\Procedure{TerminationCheck}{$x$}
			\State{Let $d=\Call{GradientComputation}{\cM(x)}$.}
			\If{$\norm{d}=0$}
				\State{Stop the program and claim that $x$ is Clarke stationary.}
			\Else
				\State{\Call{BranchPointUpdate}{$x, x$}.}
			\EndIf
		\EndProcedure
	\end{algorithmic}
	\vspace{-5pt}\noindent\makebox[\linewidth]{\rule{\linewidth}{0.4pt}}
	\begin{algorithmic}[1]
		\Procedure{BranchPointUpdate}{$x$, $\xtr$}
			\For{$\theta\in\cM(\xtr)$}
				\If{there does not exist $\theta$ in $\Thetadisc$}
					\State{Set $\Thetadisc\gets\Thetadisc\cup\{\theta\}$ and $z_\theta\gets\xtr$.}
				\ElsIf{$\norm{\xtr-x}<\norm{z_\theta-x}$}
					\State{Set $z_\theta\gets\xtr$.}
				\EndIf
			\EndFor
		\EndProcedure
	\end{algorithmic}
}
\end{algorithm}

\thmref{thm:convg-stationary-pt} is the main result of the section. 
It shows that the sequence $\{x_n\}$ generated by the algorithm converges to a Clarke stationary point. 
We first present a few technical results that are used in proving this theorem.
The following proposition is a standard result of Taylor expansion for a same branch function.
The result is the foundation for the step size analysis.
\begin{proposition}\label{prop:joint-taylor}
For any point $x$, any subset of branch functions $\cC\subseteq\tM(x)$,
let $d=\vc{\QPt{x}{\cC}}$. Then, for all $\theta\in\cC$ and step size $\alpha$ satisfying $x-\alpha d\in\tD_\theta$,
the following inequality holds:
\beq
f_\theta(x) - f_\theta(x-\alpha d) 
\ge \alpha\norm{d}^2 - \frac{1}{2}L_\Theta\alpha^2\norm{d}^2,
\eeq   
where $L_\Theta$ is the Lipschitz constant associated with the branch functions.
\end{proposition}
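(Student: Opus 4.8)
The plan is to combine two ingredients: the variational characterization of the joint gradient $d$ as a minimum-norm element, and the descent lemma coming from $L_\Theta$-smoothness. Both are elementary once set up correctly, so the work is mostly in assembling them in the right order.

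First I would observe that, by the definition of $(\QPt{x}{\cC})$, as $\lambda$ ranges over the probability simplex the vector $\sum_{\theta\in\cC}\lambda_\theta\nabla f_\theta(x)$ traces out exactly the convex hull $S := \conv(\{\nabla f_\theta(x):\theta\in\cC\})$. Hence $d=\sum_{\theta\in\cC}\lambda^*_\theta\nabla f_\theta(x)$ is the minimum-norm element of $S$, equivalently the Euclidean projection of the origin onto $S$. The first-order optimality (variational) inequality for this projection, $\inner{d}{s-d}\ge 0$ for all $s\in S$, specialized to $s=\nabla f_\theta(x)\in S$ for each $\theta\in\cC$, yields the key inequality
\[
\inner{\nabla f_\theta(x)}{d}\ge\norm{d}^2\qquad\text{for all }\theta\in\cC.
\]

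Second, since $f_\theta$ is $L_\Theta$-smooth, the descent lemma applied along the segment from $x$ to $x-\alpha d$ gives $f_\theta(x-\alpha d)\le f_\theta(x)-\alpha\inner{\nabla f_\theta(x)}{d}+\tfrac{1}{2}L_\Theta\alpha^2\norm{d}^2$. Rearranging and substituting the key inequality from the first step (using $\alpha\ge 0$, so multiplying preserves the inequality direction) produces
\[
f_\theta(x)-f_\theta(x-\alpha d)\ge\alpha\inner{\nabla f_\theta(x)}{d}-\tfrac{1}{2}L_\Theta\alpha^2\norm{d}^2\ge\alpha\norm{d}^2-\tfrac{1}{2}L_\Theta\alpha^2\norm{d}^2,
\]
which is precisely the claimed bound.

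The one point requiring care -- and the main obstacle -- is justifying the descent lemma, since it requires $f_\theta$ to be differentiable along the \emph{entire} segment $[x,x-\alpha d]$, whereas the hypotheses only supply $x\in\tD_\theta$ (from $\cC\subseteq\tM(x)$) together with the endpoint $x-\alpha d\in\tD_\theta$. If $\tD_\theta$ fails to be convex the segment could momentarily exit the domain. I would resolve this either by invoking convexity of the branch domains, or by noting that $f_\theta$ extends as an $L_\Theta$-smooth function to a neighborhood of the segment, or by restricting to step sizes $\alpha$ small enough that the segment remains in $\tD_\theta$; in each case the integral identity $f_\theta(x-\alpha d)-f_\theta(x)=\int_0^1\inner{\nabla f_\theta(x-t\alpha d)}{-\alpha d}\,dt$ combined with the $L_\Theta$-Lipschitz continuity of $\nabla f_\theta$ delivers the descent lemma. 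Everything else is routine algebra.
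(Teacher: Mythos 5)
Your proof is correct and follows essentially the same route as the paper: the key inequality $\inner{\nabla f_\theta(x)}{d}\ge\norm{d}^2$ from the projection characterization of the joint gradient (Proposition~\ref{prop:<u,vi>}) combined with the descent lemma from $L_\Theta$-smoothness. Your additional observation that the segment $[x, x-\alpha d]$ must remain in $\tD_\theta$ for the descent lemma to apply is a legitimate point of care that the paper's one-line proof silently passes over, and your proposed fixes are sound.
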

\begin{proof}
This is a direct application of the fact $\inner{\nabla f_\theta(x)}{d}\ge\norm{d}^2$ for any $\theta\in\cC$
from \propref{prop:<u,vi>}, and the Taylor expansion and $L_\Theta$-smoothness.
\end{proof}

The following proposition concerns the ratio between $f(x) - f(x-\alpha{d})$ and $\alpha\norm{d}^2$.
Note that the active branch at $x-\alpha{d}$ may be different from the one at $x$. This subtlety
needs to be considered in the analysis.
\begin{proposition}\label{prop:(f-f)/(ad^2)}
Let $\cC$ be a non-empty subset of $\tM(x)$ at any $x\in\mR^n$
and let $d=\qpvc{x}{\cC}$ satisfying $\norm{d}>0$. 
Suppose $\theta_1\in\cM(x)\cap\cC$ and $\theta_2\in\cM(x-\alpha{d})$
for a step size $\alpha$.
If $\theta_2\in\cC$ and $f_{\theta_1}(x)\ge{f}_{\theta_2}(x)$, 
the following inequality holds:
\beq
\frac{f(x) - f(x-\alpha{d})}{\alpha\norm{d}^2} \ge 1-\frac{1}{2}\alpha{L_\Theta}.
\eeq
\end{proposition}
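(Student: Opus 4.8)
The plan is to rewrite the numerator $f(x)-f(x-\alpha d)$ entirely in terms of branch functions and then reduce it to the single-branch Taylor estimate supplied by \propref{prop:joint-taylor}. First I would invoke the active-branch identities: since $\theta_1\in\cM(x)$ we have $f(x)=f_{\theta_1}(x)$, and since $\theta_2\in\cM(x-\alpha d)$ we have $f(x-\alpha d)=f_{\theta_2}(x-\alpha d)$. Hence $f(x)-f(x-\alpha d)=f_{\theta_1}(x)-f_{\theta_2}(x-\alpha d)$.

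The next step is where the hypotheses do their work, and it is the only genuinely nontrivial point. The obstacle flagged in the statement is that the branch $\theta_2$ active at $x-\alpha d$ need not coincide with the branch $\theta_1$ active at $x$, so the two endpoints of the difference are governed by different smooth functions and \propref{prop:joint-taylor} cannot be applied directly. I would bridge this by using the ordering hypothesis $f_{\theta_1}(x)\ge f_{\theta_2}(x)$ to pass from $\theta_1$ to $\theta_2$ at the base point $x$:
\[
f_{\theta_1}(x)-f_{\theta_2}(x-\alpha d)\ge f_{\theta_2}(x)-f_{\theta_2}(x-\alpha d).
\]
This converts the mixed-branch difference into a difference of the single branch function $f_{\theta_2}$ evaluated at $x$ and at $x-\alpha d$, which is exactly the form handled by the joint-Taylor estimate.

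To apply \propref{prop:joint-taylor} to $\theta_2$ I must check its hypotheses. The set $\cC\subseteq\tM(x)$ and the joint gradient $d=\qpvc{x}{\cC}$ are as in the statement, and $\theta_2\in\cC$ is assumed; the remaining domain condition $x-\alpha d\in\tD_{\theta_2}$ follows because $\theta_2\in\cM(x-\alpha d)\subseteq\tM(x-\alpha d)$, which by definition of $\tM$ means precisely $x-\alpha d\in\tD_{\theta_2}$. With these verified, \propref{prop:joint-taylor} gives $f_{\theta_2}(x)-f_{\theta_2}(x-\alpha d)\ge\alpha\norm{d}^2-\tfrac{1}{2}L_\Theta\alpha^2\norm{d}^2$.

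Chaining the two lower bounds yields $f(x)-f(x-\alpha d)\ge\alpha\norm{d}^2-\tfrac{1}{2}L_\Theta\alpha^2\norm{d}^2$, and dividing by $\alpha\norm{d}^2>0$ (positive because $\alpha>0$ and $\norm{d}>0$) produces the claimed bound $\tfrac{f(x)-f(x-\alpha d)}{\alpha\norm{d}^2}\ge 1-\tfrac{1}{2}\alpha L_\Theta$. Thus the entire argument reduces to the branch-switching step, whose correctness rests on the value comparison $f_{\theta_1}(x)\ge f_{\theta_2}(x)$ together with the domain membership $x-\alpha d\in\tD_{\theta_2}$; everything else is the standard single-branch descent estimate.
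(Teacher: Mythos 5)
Your proposal is correct and follows essentially the same route as the paper's proof: identify $f(x)=f_{\theta_1}(x)$ and $f(x-\alpha d)=f_{\theta_2}(x-\alpha d)$, use the ordering $f_{\theta_1}(x)\ge f_{\theta_2}(x)$ to pass to the single-branch difference $f_{\theta_2}(x)-f_{\theta_2}(x-\alpha d)$, and then invoke \propref{prop:joint-taylor}. The only difference is that you spell out the hypothesis checks for \propref{prop:joint-taylor} (in particular that $\theta_2\in\cM(x-\alpha d)$ gives the domain membership $x-\alpha d\in\tD_{\theta_2}$), which the paper leaves implicit.
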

\begin{proof}
By definition we have
\bdm
\ba
&\frac{f(x) - f(x-\alpha{d})}{\alpha\norm{d}^2} = \frac{f_{\theta_1}(x) - f_{\theta_2}(x-\alpha{d})}{\alpha\norm{d}^2} \\
&\ge\frac{f_{\theta_2}(x) - f_{\theta_2}(x-\alpha{d})}{\alpha\norm{d}^2}
\ge1-\frac{1}{2}\alpha{L_\Theta},
\ea
\edm
where \propref{prop:joint-taylor} is applied to derive the last inequality.
\end{proof}

Our convergence analysis points out that just using the current active branch functions to compute
a gradient ascent direction at each $x^n$ can not guarantee a lower-bounded step size in the line search. 
As a consequence, this approach cannot make $\{x^n\}$ converge to a Clarke stationary point. 
Instead, we should take into account branches with $z^n_\theta$ that are very close to $x^n$ but are not
active at $x^n$, i.e., the branches with the representative point of visit in some small neighborhood of $x^n$.
This motivates us to consider the quadratic program \eqref{opt:min_lambda_z}. The following lemma
quantifies the quality of its solution when $z_\theta$'s are very close to the point $x$. 
The result of this lemma is directly used in analyzing the step size.  
\begin{lemma}\label{lem:nonconvex-big-step}
Let $(f,\cM)$ be an encodable function, $x$ be a point with $\norm{\qpvc{x}{\cM(x)}}>0$. 
Let $\cC$ be a nonempty subset of $\cM(x)$ and $z_\theta\in\cD_\theta$ for all $\theta\in\cC$.
Let $\lambda$ be the optimal solution of the following quadratic program
\beq\label{opt:min_lambda_z}
\min_\lambda \norm{\sum_{\theta\in\cC}\lambda_\theta\nabla f_\theta(z_\theta)}^2\;
\mathrm{ s.t. } \sum_{\theta\in\cC}\lambda_\theta=1,\;\lambda_\theta\ge 0\;\forall\theta\in\cC,
\eeq
and $d=\sum_{\theta\in\cC}\lambda_\theta\nabla f_\theta(z_\theta)$. 
Then for any constant $0<\rho<1$, there exist constants $\alpha_0,\epsilon_0>0$ such that
the following inequality holds if $\alpha\le\alpha_0$ and $\max_{\theta\in\cC}\norm{z_\theta-x}<\epsilon_0$:
\beq
\frac{f(x)-f_\theta(x-\alpha d)}{\alpha\norm{d}^2}\ge \rho \quad \forall \theta\in\cC.
\eeq
\end{lemma}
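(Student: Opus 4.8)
The plan is to reduce the statement to a perturbed version of the descent estimate in \propref{prop:joint-taylor}: the only new feature is that $d$ is built from the gradients $\nabla f_\theta(z_\theta)$ at the nearby representative points rather than at $x$ itself. Write $g_\theta:=\nabla f_\theta(x)$ and $\tilde g_\theta:=\nabla f_\theta(z_\theta)$ for $\theta\in\cC$, and let $d_0:=\qpvc{x}{\cC}$ be the joint gradient that would be obtained from the gradients at $x$. First I would record two facts that do not involve $\alpha$. Since $\theta\in\cC\subseteq\cM(x)$, we have $x\in\tD_\theta$ and $f_\theta(x)=f(x)$, so $f(x)-f_\theta(x-\alpha d)=f_\theta(x)-f_\theta(x-\alpha d)$. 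Moreover, because $d=\sum_{\theta\in\cC}\lambda_\theta\tilde g_\theta$ solves \eqref{opt:min_lambda_z}, it is the minimum-norm point of $\conv\{\tilde g_\theta:\theta\in\cC\}$, so the same min-norm optimality property invoked in \propref{prop:joint-taylor} (namely \propref{prop:<u,vi>}, which is purely a statement about convex hulls of vectors) applied to $\{\tilde g_\theta\}$ yields $\inner{\tilde g_\theta}{d}\ge\norm{d}^2$ for all $\theta\in\cC$.

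Next I would transfer this variational inequality from $\tilde g_\theta$ to $g_\theta$. By $L_\Theta$-smoothness, $\norm{g_\theta-\tilde g_\theta}\le L_\Theta\norm{x-z_\theta}\le L_\Theta\epsilon_0$, so $\inner{g_\theta}{d}\ge\norm{d}^2-L_\Theta\epsilon_0\norm{d}$ and hence $\inner{g_\theta}{d}/\norm{d}^2\ge 1-L_\Theta\epsilon_0/\norm{d}$. To make this a bound independent of $d$ I need $\norm{d}$ bounded away from $0$, and this is where the hypothesis $\norm{\qpvc{x}{\cM(x)}}>0$ is used: since $\cC\subseteq\cM(x)$ we have $\conv\{g_\theta:\theta\in\cC\}\subseteq\conv\{g_\theta:\theta\in\cM(x)\}$, so the origin lies outside the smaller hull as well and $\norm{d_0}>0$. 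The hulls $\conv\{g_\theta\}$ and $\conv\{\tilde g_\theta\}$ lie within Hausdorff distance $L_\Theta\epsilon_0$, and the distance of the origin to a convex set is $1$-Lipschitz in Hausdorff distance, giving $\norm{d}\ge\norm{d_0}-L_\Theta\epsilon_0\ge\norm{d_0}/2$ once $L_\Theta\epsilon_0\le\norm{d_0}/2$. Combining, $\inner{g_\theta}{d}/\norm{d}^2\ge 1-2L_\Theta\epsilon_0/\norm{d_0}$ for every $\theta\in\cC$.

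Finally I would introduce the step size exactly as in \propref{prop:joint-taylor}. For $\alpha$ small enough that the segment from $x$ to $x-\alpha d$ stays inside the open domain $\tD_\theta$ — uniformly over the finitely many $\theta\in\cC$, using $\norm{d}\le G_f$ and a common inradius of the $\tD_\theta$ around $x$ — the $L_\Theta$-smoothness descent inequality gives $f_\theta(x)-f_\theta(x-\alpha d)\ge\alpha\inner{g_\theta}{d}-\tfrac12 L_\Theta\alpha^2\norm{d}^2$, so
\[
\frac{f(x)-f_\theta(x-\alpha d)}{\alpha\norm{d}^2}\ge\frac{\inner{g_\theta}{d}}{\norm{d}^2}-\tfrac12 L_\Theta\alpha\ge 1-\frac{2L_\Theta\epsilon_0}{\norm{d_0}}-\tfrac12 L_\Theta\alpha.
\]
Given $\rho<1$, I would then pick $\epsilon_0$ so that $2L_\Theta\epsilon_0/\norm{d_0}\le(1-\rho)/2$ (and $L_\Theta\epsilon_0\le\norm{d_0}/2$), and $\alpha_0$ so that $\tfrac12 L_\Theta\alpha_0\le(1-\rho)/2$ (and small enough for the segment condition); the right-hand side is then $\ge\rho$ uniformly over $\theta\in\cC$.

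I expect the main obstacle to be the uniform lower bound on $\norm{d}$ in the second step: one must argue that the minimum-norm value of the perturbed hull stays away from $0$ as the $z_\theta$ vary, and be careful that the controlling constant $\norm{d_0}$ is the min-norm value \emph{for the subset $\cC$} (inherited from the nondegeneracy of the full active set $\cM(x)$), not for $\cM(x)$ itself. A secondary technical point is guaranteeing that the entire segment $[x,x-\alpha d]$ lies in $\tD_\theta$ so the descent inequality applies; this is handled by openness of $\tD_\theta$ together with the crude uniform bound $\norm{d}\le G_f$.
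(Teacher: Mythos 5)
Your proof is correct and takes essentially the same route as the paper's: both reduce to $f_\theta(x)-f_\theta(x-\alpha d)$, apply \propref{prop:<u,vi>} to the perturbed gradients $\nabla f_\theta(z_\theta)$ to get $\inner{\nabla f_\theta(z_\theta)}{d}\ge\norm{d}^2$, transfer to $\nabla f_\theta(x)$ by $L_\Theta$-Lipschitzness, and use $\norm{\qpvc{x}{\cM(x)}}>0$ with $\cC\subseteq\cM(x)$ to keep $\norm{d_0}$ (hence $\norm{d}$) bounded away from zero. The only differences are cosmetic: where the paper invokes \propref{prop:d_limit} and a soft limiting argument with a slack $\delta$, you make the norm-stability step quantitative via the Hausdorff bound $\norm{d}\ge\norm{d_0}-L_\Theta\epsilon_0$ --- which is exactly the inequality inside the paper's proof of \propref{prop:d_limit} --- and you additionally spell out the segment condition $x-\alpha d\in\tD_\theta$ that the paper leaves implicit.
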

\begin{proof}
Let $d_0=\qpvc{x}{\cC}$. Note that $\norm{d_0}\ge\norm{\qpvc{x}{\cM(x)}}>0$.  
Consider any $\theta\in\cC$. Using the Taylor expansion and Lipschitz condition 
we can derive 
\beq\label{eqn:f-f>a*d^2}
\ba
&f(x)-f_\theta(x-\alpha d)\ge\alpha\inner{\nabla f_\theta(x)}{d} - \frac{1}{2}\alpha^2L_\Theta\norm{d}^2 \\
&\ge\alpha\inner{\nabla f_\theta(z_\theta)}{d} - \alpha\norm{\nabla f_\theta(x) - \nabla f_\theta(z_\theta)}\cdot\norm{d} - \frac{1} {2}\alpha^2L_\Theta\norm{d}^2 \\
&\ge\alpha\inner{\nabla f_\theta(z_\theta)}{d} - \alpha{L_\Theta}\norm{x-z_\theta}\cdot\norm{d} - \frac{1} {2}\alpha^2L_\Theta\norm{d}^2 \\
&\ge\alpha\norm{d}^2 - \alpha{L_\Theta}\norm{x-z_\theta}\cdot\norm{d} - \frac{1} {2}\alpha^2L_\Theta\norm{d}^2,
\ea
\eeq
where the last inequality follows from \propref{prop:<u,vi>}.
By \propref{prop:d_limit}, we have 
\beq\label{eqn:lim_d_to_d0}
\lim_{\substack{z_{\theta^\prime}\to x\\\forall\theta^\prime\in\cC}} \norm{d}= \norm{d_0}.
\eeq
Using \eqref{eqn:lim_d_to_d0} in \eqref{eqn:f-f>a*d^2}, we conclude that
\beq
\lim_{\substack{z_{\theta^\prime}\to x\\\forall\theta^\prime\in\cC}} f(x)-f_\theta(x-\alpha d)\ge
\alpha\norm{d_0}^2 - \frac{1} {2}\alpha^2L_\Theta\norm{d_0}^2.
\eeq
Therefore, we have
\beq
\ba
&\lim_{\substack{z_{\theta^\prime}\to x\\ \forall\theta^\prime\in\cC}}\frac{f(x)-f_\theta(x-\alpha d)}{\alpha\norm{d}^2}
\ge\frac{\alpha\norm{d_0}^2 - \frac{1} {2}\alpha^2L_\Theta\norm{d_0}^2}{\alpha\norm{d_0}^2} 
=1-\frac{1}{2}\alpha L_\Theta. 
\ea
\eeq
Therefore, the following inequality holds for some small enough $\delta>0$
if $\alpha$ and $\epsilon_0$ are sufficiently small
and $\norm{z_{\theta^\prime}-x}<\epsilon_0\;\forall\theta^\prime\in\cC$:
\beq
\ba
\frac{f(x)-f_\theta(x-\alpha d)}{\alpha\norm{d}^2}\ge 1-\frac{1}{2}\alpha{L_\Theta}-\delta>\rho,
\ea
\eeq
which concludes the proof.
\end{proof}

\begin{theorem}\label{thm:convg-stationary-pt}
For an encodable function $(f,\cM)$, suppose the \algoref{alg:trust-region-nonconvex} outputs a sequence 
$\{x^n\}^\infty_{n=1}$ that is bounded. Then the sequence converges to a Clarke stationary point of $f$. 
\end{theorem}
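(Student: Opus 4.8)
The plan is to argue by contradiction through a convergent subsequence, using the monotone decrease of the objective to squeeze the per-iteration decrease to zero. First I would record that $\{f(x^n)\}$ is nonincreasing and bounded below: every accepted move passes the sufficient-decrease test $\rho\ge\rho_0$ in the line search, and the retained direction realizes the largest inner decrease $\eta_n=f(x^n)-f(x^{n+1})\ge\rho_0\,\alpha_n\norm{d^n}^2\ge0$, while boundedness of $\{x^n\}$ and continuity of $f$ bound $\{f(x^n)\}$ from below. Hence $\{f(x^n)\}$ converges, so $\eta_n\to0$ and therefore $\alpha_n\norm{d^n}^2\to0$. Since $\alpha_n\le\alphamax$ and each $\norm{d^n}\le G_f$ (a convex combination of bounded gradients), this also gives $\norm{x^{n+1}-x^n}=\alpha_n\norm{d^n}\le\sqrt{\alphamax}\,\sqrt{\alpha_n\norm{d^n}^2}\to0$.

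By boundedness the sequence has an accumulation point $x^*$, say $x^{n_j}\to x^*$, and I would show $x^*$ is Clarke stationary. Suppose not; then by the corollary characterizing Clarke stationarity, $\delta:=\norm{\qpvc{x^*}{\cM(x^*)}}>0$. The structural fact I would exploit is that the \textsc{TerminationCheck} closing the previous outer iteration invokes \textsc{BranchPointUpdate}$(x^{n},x^{n})$, which forces $z_\theta=x^{n}$ for every $\theta\in\cM(x^{n})$; thus at the start of iteration $n$ the active branches at $x^{n}$ all lie in $\cT(x^{n},r_0)$ with representative point exactly $x^{n}$. Consequently the candidate produced for $k=1$ is $\cT^{n}_1=\cM(x^{n})$ with $d^{n}_1=\qpvc{x^{n}}{\cM(x^{n})}$, evaluated with every $z_\theta=x^{n}$.

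I would then bound both $\norm{d^{n_j}_1}$ and the accepted step from below, uniformly in $j$. For the norm: Condition~4 of \defref{def:piecewise-diff} supplies a neighborhood $U$ of $x^*$ with $\cM(y)\subseteq\cM(x^*)$ for $y\in U$, and for $x^{n_j}$ near $x^*$ every branch of $\cM(x^*)$ has $x^{n_j}$ in its open domain; since enlarging the index set can only lower the optimal value of \eqref{opt:qp-sum-lambda-grad-f}, $\norm{d^{n_j}_1}=\norm{\qpvc{x^{n_j}}{\cM(x^{n_j})}}\ge\norm{\qpvc{x^{n_j}}{\cM(x^*)}}\to\delta$ by continuity (\propref{prop:d_limit}), so $\norm{d^{n_j}_1}\ge\delta/2$ for $j$ large. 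For the step: because all $z_\theta=x^{n_j}$, the hypotheses of \lemref{lem:nonconvex-big-step} (equivalently \propref{prop:(f-f)/(ad^2)} with $\cC=\cM(x^{n_j})$, using $\cM(x^{n_j}-\alpha d)\subseteq\cM(x^{n_j})$ for small $\alpha$ and that the active value dominates) hold with the uniform threshold $\alpha_0=2(1-\rho_0)/L_\Theta$; as the $k=1$ backtracking shrinks $\alpha$ by $\mudec$ from the positive initial value $\min\{\alphamax,r_0/\norm{d^{n_j}_1}\}\ge\min\{\alphamax,r_0/G_f\}$, the retained step is bounded below by a uniform $\alpha_*>0$. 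The $k=1$ candidate alone therefore achieves a decrease $\rho_0\,\alpha_*\,(\delta/2)^2=:c>0$, and since $\eta$ is the maximal decrease over all inner candidates, $\eta_{n_j}\ge c>0$, contradicting $\eta_n\to0$. Hence $x^*$ is Clarke stationary.

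The main obstacle I anticipate is precisely this uniformity: I must keep $\alpha_0$, the initial-step lower bound, and the direction-norm lower bound from degrading along the subsequence as $\cM(x^{n_j})$ fluctuates among subsets of $\cM(x^*)$. The device that neutralizes it is the \textsc{TerminationCheck}/\textsc{BranchPointUpdate} mechanism pinning each active branch's representative point to the current iterate, which makes the $\epsilon_0$-closeness requirement of \lemref{lem:nonconvex-big-step} vacuous (so $\alpha_0$ is index-independent) and forces the $k=1$ quadratic program to be taken over the full active set at the point itself, so that subset-monotonicity transfers the non-stationarity $\delta>0$ from $x^*$ to the iterates. Finally, to upgrade ``every accumulation point is Clarke stationary'' to convergence of the whole sequence, I would invoke $\norm{x^{n+1}-x^n}\to0$: the accumulation set is then a connected compact set of Clarke stationary points, collapsing to a single limit when the stationary points are isolated.
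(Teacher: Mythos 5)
The fatal gap is your claimed uniform step-size lower bound $\alpha_*$ for the $k=1$ candidate. Pinning $z_\theta=x^{n_j}$ via \textproc{TerminationCheck} does make the $\epsilon_0$-closeness requirement of \lemref{lem:nonconvex-big-step} vacuous, but neither that lemma nor \propref{prop:(f-f)/(ad^2)} controls the actual line-search quantity $f(x^{n_j})-f(x^{n_j}-\alpha d^{n_j}_1)$ unless the active branch at the \emph{trial point} lies in $\cC=\cM(x^{n_j})$: \propref{prop:(f-f)/(ad^2)} explicitly hypothesizes $\theta_2\in\cC$, and \lemref{lem:nonconvex-big-step} only bounds $f(x)-f_\theta(x-\alpha d)$ for $\theta\in\cC$. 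The containment $\cM(x^{n_j}-\alpha d^{n_j}_1)\subseteq\cM(x^{n_j})$ comes from Condition 4 of \defref{def:piecewise-diff}, whose neighborhood is point-dependent; it degenerates as $x^{n_j}\to x^*$ precisely in the interesting case where $\cM(x^*)\supsetneq\cM(x^{n_j})$, i.e.\ when the iterates approach a kink from one of its smooth sides. Once the trial point crosses into a branch outside $\cM(x^{n_j})$, the ratio test can fail and backtracking continues, so the accepted step decays like the distance from $x^{n_j}$ to the kink set and the resulting decrease $\rho_0\alpha_t\norm{d^{n_j}_1}^2$ tends to zero---no contradiction with $\eta_n\to0$. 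The paper itself warns against exactly this: the remark preceding \lemref{lem:nonconvex-big-step} states that using only the currently active branch functions ``can not guarantee a lower-bounded step size in the line search,'' and Wolfe-type zigzag examples realize the failure; your argument, if valid, would prove convergence of pure active-gradient descent with backtracking, which is false for piecewise-smooth functions.

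What fills this hole in the paper's proof---and is absent from yours---is the machinery of Claims 4--6: the set $\Theta^*$ of branches whose \emph{representative points} converge to $x^*$ (which may strictly contain every $\cM(x^n)$), the existence of an inner index $k_n$ with $\cT^n_{k_n}=\Theta^*$ so the QP direction incorporates gradients of branches not active at $x^n$ but clustering at $x^*$, and crucially Claim 6, which exploits the finiteness of $\Theta$ together with the \textproc{BranchPointUpdate} calls on \emph{rejected} trial points to obtain a uniform constant $c_0>0$ such that $\cM(x^n-\alpha_t d^n_{k_n})\subseteq\Theta^*$ whenever $\alpha_t\le c_0$ and $n$ is large; only then does the ratio bound hold down to a uniform threshold, yielding a termination step $\alpha_{t^\prime}\ge\mudec\beta$ and a decrease bounded below. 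Your subset-monotonicity bound $\norm{\qpvc{x^{n_j}}{\cM(x^{n_j})}}\ge\norm{\qpvc{x^{n_j}}{\cM(x^*)}}\to\delta$ and the derivation $\norm{x^{n+1}-x^n}\to0$ are both fine, but note a secondary gap at the end: the theorem asserts convergence of the whole sequence with no isolatedness hypothesis, whereas your connectedness argument only concludes under the extra assumption that stationary points are isolated. The paper's Claim 3 instead proves uniqueness of the limit point directly: two distinct limit points with equal $f$-values would force the accepted ratio $\rho_n<\rho_0$ on transition steps, contradicting the line-search acceptance rule.
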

\begin{proof}
Claim 1. The sequence generated by the algorithm satisfies $f(x^n)>f(x^{n+1})$ as long as 
$x^n$ is not a Clarke stationary point. 

Proof of Claim 1. We first observe that $z_1=x^n$ at \linref{lin:k-nearest} of the algorithm. 
This is because the procedure \textproc{BranchPointUpdate}($x^n, x^n$) has set $z_\theta=x^n$
for every $\theta\in\cM(x^n)$, and hence $\cM(x^n)\subseteq\cT(x^n,r_0)$ and $x^n\in\cZ$ by 
definition of these sets. Consider the while-loop at \linref{lin:update_k} for $k=1$ in the main algorithm.
One has $\cT^n_1=\cM(x^n)$. Since $x^n$ is not stationary, one has $\norm{d^n_1}>0$.  
By Condition 4 of \defref{def:piecewise-diff}, if $\alpha_t$ is sufficiently small, 
one has $\cM(x^n-\alpha_td^n_1)\subseteq\cM(x^n)$. Applying \propref{prop:(f-f)/(ad^2)}
with $\cC=\cM(x^n)$ and $\theta_1=\theta_2\in\cC$ in the proposition, the ratio $\rho$ should satisfy
\bdm
\rho = \frac{f(x^n) - f(x^n-\alpha_{t}d^n_1)}{\alpha_{t}\norm{d^n_1}^2}\ge1-\frac{1}{2}\alpha_tL_\Theta\ge\rho_0,
\edm
given that $\alpha_t$ is small enough. It shows that the condition at \linref{lin:rho^n_k>=rho_0}
of \Call{LineSearch}{} called at inner iteration $k=1$ should hold at some $t$, and also
 $f(x^n)-f(x^{n+1})\ge\rho_0{\alpha_t}\norm{d^n_1}^2>0$.
This proves the claim.

Claim 2. The while-loop in the \Call{LineSearch}{} function terminates in finitely many iterations. 

Proof of Claim 2. The while-loop terminates when $flag=1$. Consider \linref{lin:k=1_or_k>=2}
in \Call{LineSearch}{}. For $k\ge2$, the $\alpha_t$ needs
to satisfy $\alpha_t>r/\norm{d}$ to be qualified for further reduction. When $\alpha_t$ becomes
smaller than this threshold, $flag$ will be set to 1 and the while-loop terminates.
For $k=1$, the proof of Claim 1 has shown that $\rho\ge\rho_0$ will satisfy if $\alpha_t$
becomes small enough, and hence the while-loop also terminates in this case.   

Claim 3. The sequence generated by the algorithm can only have exactly one limit point. 

Proof of Claim 3. Suppose the sequence has more than one limit points.  
If the algorithm terminates, a Clarke stationary point has been identified and 
the claim obviously holds. We therefore assume that it does not terminate. 
Let $y$ and $y^\prime$ be two distinct limit points. Since $f$ is lower bounded
and the function value strictly decreases with the outer iteration $n$ by Claim 1,
we must have $f(y)=f(y^\prime)$.
Let $x^n$ and $x^{n+1}$ be two consecutive points generated by the outer iteration,
and suppose they are in sufficiently small neighborhoods of $y$ and $y^\prime$, respectively. 
Let $\alpha_n$ and $d^n$ be the step size and gradient given at \linref{lin:x^n_update}
of the main algorithm in the outer iteration $n$, i.e., $x^{n+1}=x^n-\alpha_nd^n$. 
We have $\alpha_n\norm{d^n}\ge\norm{y-y^\prime}-2\epsilon$
and hence $\alpha_n\norm{d^n}^2\ge(\norm{y-y^\prime}-2\epsilon)^2/\alpha_n$. Therefore, the ratio
\bdm
\rho_n = \frac{f(x^n)-f(x^n-\alpha_nd^n)}{\alpha_n\norm{d^n}^2}
\le\frac{\alpha_n[f(x^n)-f(x^n-\alpha_nd^n)]}{(\norm{y-y^\prime}-2\epsilon)^2}
\edm
can be sufficiently small (i.e., $\rho_n<\rho_0$) as because $\epsilon\to 0$, 
$f(x^n)\to f(y)$, $f(x^n-\alpha_nd^n)=f(x^{n+1})\to f(y^\prime)$, $f(y)=f(y^\prime)$ and $\norm{y-y^\prime}$
is a fixed value. This indicates that the condition at \linref{lin:rho^n_k>=rho_0} of \Call{LineSearch}{}
cannot be satisfied and hence $x^{n+1}$ cannot be in a sufficiently small neighborhood of $y^\prime$,
which leads to a contradiction. This proves the claim. 

Let $x^*$ be the unique limit point of $\{x^n\}^\infty_{n=1}$ by Claim 3. 
Let $\Thetadisc^n$ be the value of the dynamic set $\Thetadisc$ at the beginning of the outer iteration $n$, 
and $\Theta_0=\lim_{n\to\infty}\Thetadisc^n$ (note that $\Thetadisc^m\subseteq\Thetadisc^n$ for $m\le{n}$
and hence the limit exists).
Let $z^n_\theta$ be the (unique) representative point associated 
with branch $\theta$ for $\theta\in\Thetadisc^n$ at the beginning of the outer iteration $n$. 

Claim 4. For each $\theta\in\Theta_0$, either one of the following two cases should hold:
(1) $\lim_{n\to\infty} z^n_\theta = x^*$; (2) $\exists\epsilon>0$ such that
$\inf_{n\to\infty}\norm{z^n_\theta - x^*}\ge\epsilon$.

Proof of Claim 4. Note that $\Theta_0$ can be partitioned into two 
subsets $\Theta_1$ and $\Theta_2$. Branches in $\Theta_1$ (resp. $\Theta_2$) 
are visited by $\{x^n\}$ in finitely (resp. infinitely) many iterations.
By the rule of branch-point update in \Call{BranchPointUpdate}{},
$z^n_\theta$ is the closest point to $x^n$ from the set $\cD_\theta\cap\{x^i\}^n_{i=1}$.
Since $x^n\to{x^*}$, it follows that there exists $\epsilon>0$ such that $\inf_{n\to\infty}\norm{z^n_\theta - x^*}\ge\epsilon$
for $\theta\in\Theta_1$, and $\lim_{n\to\infty} z^n_\theta = x^*$ for $\theta\in\Theta_2$.
This proves the claim. 

Claim 5. Let $\Theta^*=\{\theta\in\Theta_0:\lim_{n\to\infty}z^n_\theta=x^*\}$, which
is well defined due to Claim 4.
Then $\Theta^*\subseteq\cM(x^*)$. Furthermore, for any outer iteration $n$ that is large enough,
there exists an inner iteration index $k=k_n$ such that $\cT^n_{k_n}=\Theta^*$. 

Proof of Claim 5. Condition 4 in \defref{def:piecewise-diff} implies $\Theta^*\subseteq\cM(x^*)$.
By the definition of $\Theta^*$, we conclude that $\Theta^*\subseteq\cT(x^n,r_0)$
for any $n$ that is large enough. Furthermore, there exists a constant $b_0>0$
such that $\max_{\theta\in\Theta^*}\norm{z^n_\theta-x^n}<b_0$ and  
$\max_{\theta\in\cT(x^n,r_0)\setminus\Theta^*}\norm{z^n_\theta-x^n}>b_0$
for large enough $n$. Therefore, by enumerating over elements in $\cZ^n$ based on
their distance from $x^n$, one can identify a number $k_n$ such that 
$\{z^n_\theta: \theta\in\Theta^*\}$ are the $k_n$ distinct nearest points from $x^n$ within the set $\cZ^n$,
and hence $\cT^n_{k_n}=\Theta^*$. This proves the claim.

Claim 6. There exists a constant $c_0>0$ and a large enough integer $N$ 
such that if $n>N$ and $\alpha_t\le{c_0}$ then $\cM(x^n-\alpha_td^n_k)\subseteq\Theta^*$,
where $n$ and $k$ are outer- and inner-iteration indices, and $\alpha_t$ is a step-size generated 
at \linref{lin:alpha-dec} in \Call{LineSearch}{}. 

Proof of Claim 6. We prove it by contradiction. Suppose the claim does not hold.
Then there exist subsequences 
$\{n_i\}$, $\{k_i\}$, $\{\alpha_{t_i}\}$ and $\{\theta_i\}$
such that $n_i\to\infty$, $\alpha_{t_i}\to0$
and $\theta_i\in\cM(x^{n_i}-\alpha_{t_i}d^{n_i}_{k_i})$ but $\theta_i\notin\Theta^*$.
Since the total number $|\Theta|$ of branch functions is finite. There exists a 
branch $\theta$ that appears infinitely many times in the subsequence $\{\theta_i\}$.
Due to the procedure \textproc{BranchPointUpdate} called at \linref{lin:update1} in \Call{LineSearch}{},
one has $\cM(x^{n_i}-\alpha_{t_i}d^{n_i}_{k_i})\subseteq\Theta_0$ by 
the definition of $\Theta_0$ and hence $\theta\in\Theta_0$.
Since $\lim_{i}x^{n_i}-\alpha_{t_i}d^{n_i}_{k_i}=x^*$, we conclude that $z^n_\theta\to{x^*}$,
which indicates that $\theta\in\Theta^*$ by the definition of $\Theta^*$ and it contradicts to the hypothesis $\theta\notin\Theta^*$.
This proves the claim. 

It is ready to prove the theorem by contradiction. Suppose the limit point $x^*$ is non-stationary.
Then we have $\norm{d^*}>0$, where $d^*=\qpvc{x^*}{\cM(x^*)}$. 
Let $k_n$ be the inner iteration index specified in Claim 5. 
For a large enough $n$, consider the outer- and inner-iteration $(n,k_n)$ for which $\cT^n_{k_n}=\Theta^*$. 
Consider the \Call{LineSearch}{} function called at the outer- and inner-iteration $(n,k_n)$
in the main algorithm \linref{lin:call-line-search}.
Note that $d^n_{k_n}$ is the optimal vector of \eqref{opt:qp-sum-lambda-grad-f}
with $\cT^n_{k_n}=\Theta^*$. By \propref{prop:d_limit}, 
we have $\norm{d^n_{k_n}}\to\norm{d^*}$ as $n\to\infty$
due to that $z^n_\theta\to{x^*}$ and the continuity of $\nabla{f}_\theta(x)$.
Note that by Claim 6, there exists a constant $c_0>0$ such that when $\alpha_t<{c_0}$, 
one has $\cM(x^n-\alpha_td^n_k)\subseteq\Theta^*$. Since $x^n\to{x^*}$,
one has $\cM(x^n)\subseteq\Theta^*\subseteq\cM(x^*)$ by Claim 5. 
Then the following inequalities hold for any $\theta\in\Theta^*$:
\beq
\ba
&\frac{f(x^n)-f_\theta(x^n-\alpha_t d^n_{k_n})}{\alpha_t\norm{d^n_{k_n}}^2} = \frac{f(x^n)-f(x^*)+f(x^*)-f_\theta(x^n-\alpha_t d^n_{k_n})}{\alpha_t\norm{d^n_{k_n}}^2} \\
&\ge\frac{f(x^n)-f(x^*)+f(x^*)-f_\theta(x^n)+\alpha_t\inner{\nabla f_\theta(x^n)}{d^n_{k_n}}-\frac{1}{2}\alpha^2_t\norm{d^n_{k_n}}^2}{\alpha_t\norm{d^n_{k_n}}^2} \\
&=\frac{f(x^n)-f(x^*)+f_\theta(x^*)-f_\theta(x^n)+\alpha_t\inner{\nabla f_\theta(x^n)}{d^n_{k_n}}-\frac{1}{2}\alpha^2_t\norm{d^n_{k_n}}^2}{\alpha_t\norm{d^n_{k_n}}^2} \\
&\ge\frac{f(x^n)-f(x^*)+f_\theta(x^*)-f_\theta(x^n)+\alpha_t\norm{d^n_{k_n}}^2-\frac{1}{2}\alpha^2_t\norm{d^n_{k_n}}^2}{\alpha_t\norm{d^n_{k_n}}^2}\quad \mathrm{\propref{prop:<u,vi>}}.
\ea
\eeq
The above inequality implies that if $\alpha_t\ge{c_1}$ for some constant $c_1>0$, one has
\beq
\ba
&\lim_n\frac{f(x^n)-f_\theta(x^n-\alpha_t d^n_{k_n})}{\alpha_t\norm{d^n_{k_n}}^2} \\
&\ge\lim_n\frac{f(x^n)-f(x^*)+f_\theta(x^*)-f_\theta(x^n)+\alpha_t\norm{d^n_{k_n}}^2-\frac{1}{2}\alpha^2_t\norm{d^n_{k_n}}^2}{\alpha_t\norm{d^n_{k_n}}^2} \\
&=\frac{\alpha_t\norm{d^*}^2-\frac{1}{2}\alpha^2_t\norm{d^*}^2}{\alpha_t\norm{d^*}^2} = 1-\frac{1}{2}\alpha_t.
\ea
\eeq 
Therefore, if $n$ is large enough, we have 
\beq\label{eqn:rhon>rho0}
\frac{f(x^n)-f_\theta(x^n-\alpha_t d^n_{k_n})}{\alpha_t\norm{d^n_{k_n}}^2}\ge\rho_0
\eeq
for any $\theta\in\Theta^*$ and any $\alpha_t$ that is small enough. 
Let $\theta^\prime\in\cM(x^n-\alpha_td^n_k)$. Then we have:
\beq\label{eqn:rhon>rho0_part2}
\frac{f(x^n)-f(x^n-\alpha_t d^n_{k_n})}{\alpha_t\norm{d^n_{k_n}}^2}=\frac{f(x^n)-f_{\theta^\prime}(x^n-\alpha_t d^n_{k_n})}{\alpha_t\norm{d^n_{k_n}}^2}
\overset{\eqref{eqn:rhon>rho0}}{\ge}\rho_0
\eeq 
for any small enough step size $\alpha_t$.
Since $\theta^\prime\in\Theta^*$, \eqref{eqn:rhon>rho0_part2} implies that 
once $\alpha_t$ decreases down below some constant $\beta>0$, 
the condition $\rho\ge\rho_0$ must hold and the while-loop should
terminate at the end of this iteration of $t$. So the termination step-size $\alpha_{t^\prime}$
must satisfy $\alpha_{t^\prime}\ge\mudec\beta$. 
Then the reduction of objective value is guaranteed 
to have a lower bound $\rho_0\mudec\beta\norm{d^n_{k_n}}^2\ge\rho_0\mudec\beta\norm{d^*}^2/2$.
This is a positive constant, which contradicts to $x^n\to{x^*}$. Therefore, $x^*$ is stationary.
\end{proof}

\section{An enhanced algorithm and analysis of the convergence rate}
\label{sec:CID-JGD}
In this section, we develop and analyze an enhanced branch-information-driven
gradient descent algorithm (EBIGD). See \algoref{alg:global_and_local_converg} for details.
This algorithm can achieve local linear convergence under certain conditions.
To analyze the rate of convergence, we consider ecodable piecewise-smooth functions that are locally-max representable
defined as follows: 
\begin{definition}[locally-max representability]\label{def:local-max}
An encodable function $(f,\cM)$ is \textit{locally-max-representable} in an open neighborhood $A$
of a local minimizer $x^*$ if for any $x\in{A}$, the objective $f$ has the representation $f(x)=\max\{f_\theta(x): \theta\in\tM(x)\}$.    
\end{definition}
A notion that is similar to \defref{def:local-max} has been used in \cite{lewis2023-survey-descent} (Definition~3) to analyze the convergence rate
of a survey descent method. We make the following two additional assumptions throughout the section:
\begin{assumption}\label{ass:twice-diff}
Every branch function is twice differentiable in its domain of definition, i.e., the Hessian matrix 
$\nabla^2{f}_\theta(x)$ exists for any $x\in\tD_\theta$ and $\theta\in\Theta$.
\end{assumption}
\begin{assumption}\label{ass:verify-theta-in-tM}
For any $x\in\mR^n$, it is possible for an algorithm to verify if a given branch $\theta$ is in $\tM(x)$.
\end{assumption}
The above assumption is used in \algoref{alg:global_and_local_converg}.
The motivation of it is to verify if the current point $x$ is in the definition domain
$\tD_\theta$ of a branch $\theta$ that is not active at $x$. 
If $x\notin\tD_\theta$, the function value $f_\theta(x)$ and gradient $\nabla{f}_\theta(x)$
are not well defined, and hence cannot be used in the algorithm. However, in a small enough
neighborhood $A$ of $x^*$, all the active branches in this region belong to $\tM(x)$ for any $x\in{A}$. 
   
It can be shown that in a small neighborhood of a local minimizer $x^*$,
the distance from a point $x$ to $x^*$ is bounded by $\norm{\vc{\QPt{x}{\cM(x^*)}}}+\gap(x,\cM(x^*))$ 
up to a constant factor, where $\gap(x,\cC)=\max_{\theta\in\cC}f_\theta(x)-\min_{\theta\in\cC}f_\theta(x)$
for any set $\cC\subseteq\tM(x)$ of branch functions. Therefore, it amounts to reduce the norm 
of the joint gradient and the gap. However, the set $\cM(x^*)$ is not fully known
at a point $x\neq{x^*}$, which is a challenge to the algorithm design.

Based on the above motivation, the EBIGD algorithm works as follows:  At each iteration, the algorithm collects 
a set $\cC$ of branch functions sequentially through the function \textproc{BranchSelection}. 
It first initializes $\cC$ to just include an active 
branch at the current point $x$, and computes a corresponding joint gradient $d=\vc{\QPt{x}{\cC}}$. 
Using a sufficiently small constant step size $\alpha$, it then checks if the function value $f(x-\alpha{d})$ 
has a sufficient decrease compared to $f(x)$. If sufficient decrease is not
met while $x-\alpha{d}$ is in an active branch $\theta^\prime\notin\cC$, the algorithm 
adds $\theta^\prime$ to $\cC$ and computes a new joint gradient. 
This procedure is repeated until the sufficient decrease is satisfied. 
The function \textproc{BranchSelection} returns an eventual set $\cC$ of branches
encountered in this process and the corresponding joint gradient $d$. 
It can be shown that the set $\cC$ contains a complete basis (\defref{def:opt-joint-domain,opt-set}) 
of $\cM(x^*)$ if $x$ is in a sufficiently small neighborhood of $x^*$.
Using this information, the algorithm then executes two functions $\textproc{JointGradientReduction}$
and $\textproc{GapReduction}$ separately to compare which one leads to larger function value reduction. 
The $\textproc{JointGradientReduction}$ returns a candidate point $x-\alpha{d}$, and it can be proved that
the norm $\norm{\vc{\QPt{x}{\cC}}}$ of the joint gradient will decrease if the solution $x$ moves to this candidate point. 
The function $\textproc{GapReduction}$ is designed to reduce the gap $\gap(x,\cC)$ by solving a 
convex program involving a simple quartic term to determine a descent direction. 

A local minimizer $x^*$ is Clarke stationary. In general, it is possible to find a proper subset $\cC$ of $\cM(x^*)$
such that $\bs{0}\in\conv(\{\nabla{f}_\theta(x^*):\;\theta\in\cC\})$. In this case, $\cM(x^*)$ is more than necessary
to characterize the Clarke stationarity of $x^*$. This motivates the definition of a complete basis as follows:
\begin{definition}\label{def:opt-joint-domain,opt-set}
For an encodable function $(f,\cM)$ and a local minimizer $x^*$, 
A subset $\cC\subset\cM(x^*)$ is a \textit{complete basis} of branch functions if 
it satisfies the following two conditions:
\begin{itemize}
	\item[(a)] $\norm{\vc{\QPt{x^*}{\cC}}}=0$; 
	\item[(b)] either $|\cC|=1$ or $\norm{\vc{\QPt{x^*}{\cC^\prime}}}>0$ for any $\cC^\prime\subsetneq\cC$.
\end{itemize}
\end{definition}
In words, a complete basis is a minimal subset $\cC$ of $\cM(x^*)$ satisfying $\bs{0}\in\conv(\{\nabla{f}_\theta(x^*):\;\theta\in\cC\})$.
\begin{observation}\label{obs:complete-basis}
If $x^*$ is a local minimizer of $f$, there exists at least one complete basis of branch functions at $x^*$.
Furthermore, in a sufficiently small neighborhood $A$ of $x^*$, there exists a constant $c>0$ such that
$\norm{\vc{\QPt{x}{\cC^\prime}}}>c$ for any $x\in{A}$ and any $\cC^\prime\subseteq\cM(x^*)$ that does not contain a complete basis of $\cM(x^*)$.
\end{observation}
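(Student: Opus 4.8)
The plan is to prove the two assertions separately, reducing both to the finiteness of the subset lattice of $\cM(x^*)$ together with the continuity of the joint-gradient map. For the existence claim, I would first recall that a local minimizer is Clarke stationary, so by the simplified representation of the Clarke differential one has $\bs{0}\in\conv(\{\nabla f_\theta(x^*):\theta\in\cM(x^*)\})$, i.e.\ $\norm{\vc{\QPt{x^*}{\cM(x^*)}}}=0$. Thus $\cM(x^*)$ itself satisfies condition (a) of \defref{def:opt-joint-domain,opt-set}. Since $\Theta$, and hence $\cM(x^*)$, is finite, I would then choose a nonempty subset $\cC\subseteq\cM(x^*)$ that is minimal with respect to inclusion among all nonempty subsets satisfying (a). By minimality, every proper nonempty $\cC^\prime\subsetneq\cC$ fails (a), i.e.\ $\norm{\vc{\QPt{x^*}{\cC^\prime}}}>0$, which is precisely condition (b) (the alternative $|\cC|=1$ being harmless). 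Hence $\cC$ is a complete basis, proving the first statement.

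For the uniform lower bound, the first step is to show that at $x^*$ every offending set already has strictly positive joint gradient: if $\cC^\prime\subseteq\cM(x^*)$ contains no complete basis yet $\norm{\vc{\QPt{x^*}{\cC^\prime}}}=0$, then the same minimal-subset argument applied inside $\cC^\prime$ would produce a complete basis contained in $\cC^\prime$, a contradiction. Because $\cM(x^*)$ is finite, there are only finitely many such $\cC^\prime$, so $c_0:=\min_{\cC^\prime}\norm{\vc{\QPt{x^*}{\cC^\prime}}}$ is a strictly positive constant. I would then upgrade this pointwise bound to a neighborhood bound by continuity: for each $\theta\in\cM(x^*)$ the domain $\tD_\theta$ is open and contains $x^*$, so $\nabla f_\theta$ is defined and continuous near $x^*$, and by \propref{prop:d_limit}, which asserts continuity of the joint-gradient norm in its evaluation points, the map $x\mapsto\norm{\vc{\QPt{x}{\cC^\prime}}}$ is continuous at $x^*$. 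Hence for each $\cC^\prime$ there is a neighborhood $A_{\cC^\prime}$ on which $\norm{\vc{\QPt{x}{\cC^\prime}}}>c_0/2$; taking $A=\bigcap_{\cC^\prime}A_{\cC^\prime}$ (a finite intersection, hence again a neighborhood of $x^*$) and $c=c_0/2$ yields the claim.

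The main obstacle, and the only place where care is genuinely required, is making the lower bound uniform over all admissible $\cC^\prime$ simultaneously. This is handled entirely by exploiting the finiteness of the subset lattice of $\cM(x^*)$: it lets me both take the minimum $c_0$ over the offending sets and intersect the finitely many neighborhoods $A_{\cC^\prime}$ without losing openness. The supporting continuity of the joint-gradient norm is routine (the optimal value of \eqref{opt:QP-x-C} is the squared distance from the origin to $\conv\{\nabla f_\theta(x):\theta\in\cC^\prime\}$, which varies continuously with the finitely many continuous gradients) and is already furnished by \propref{prop:d_limit}; the one technical point to verify is that every $\cC^\prime$ under consideration consists of branches active at $x^*$, so that all relevant gradients are well defined throughout a common neighborhood of $x^*$.
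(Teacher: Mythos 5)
Your proof is correct; note that the paper states this result as an \emph{Observation} and supplies no proof of its own, so there is no in-paper argument to compare against--your write-up fills that gap using exactly the ingredients the paper makes available. The existence half is right: a local minimizer is Clarke stationary, and the paper's simplified representation $\partial f(x^*)=\conv(\{\nabla f_\theta(x^*):\theta\in\cM(x^*)\})$ gives $\norm{\vc{\QPt{x^*}{\cM(x^*)}}}=0$, i.e.\ condition (a) for the (nonempty, finite) set $\cM(x^*)$; an inclusion-minimal nonempty subset satisfying (a) then satisfies (b) by construction (with the $|\cC|=1$ alternative subsumed, and with the tacit understanding that (b) quantifies over nonempty proper subsets). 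The same minimality argument correctly shows that any $\cC^\prime\subseteq\cM(x^*)$ not containing a complete basis must have $\norm{\vc{\QPt{x^*}{\cC^\prime}}}>0$, and finiteness of the subset lattice makes $c_0>0$ legitimate. Your continuity step is also sound, including the technical point you flag: since $\cC^\prime\subseteq\cM(x^*)\subseteq\tM(x^*)$ and each $\tD_\theta$ is open, all relevant gradients are defined and continuous on a common neighborhood of $x^*$, and sequential continuity from Proposition~\ref{prop:d_limit} suffices in $\mR^n$ to produce each $A_{\cC^\prime}$. One streamlining worth noting: the proof of Proposition~\ref{prop:d_limit} actually establishes the two-sided estimate $\abs{\norm{d^j}-\norm{d^*}}\le\max_i\norm{u^j_i-v_i}$, which combined with $L_\Theta$-smoothness gives $\abs{\norm{\vc{\QPt{x}{\cC^\prime}}}-\norm{\vc{\QPt{x^*}{\cC^\prime}}}}\le L_\Theta\norm{x-x^*}$ uniformly over all $\cC^\prime$ at once; hence the single ball $A=\mB(x^*,c_0/(2L_\Theta))$ with $c=c_0/2$ works directly, making the finite intersection of neighborhoods unnecessary--though your route is equally valid.
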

The following observation is useful in gap reduction analysis.
\begin{observation}\label{obs:bi-polar}
Given $\{\nabla f_i(x^*): i\in\cM(x^*)\}$ for a local minimizer $x^*$, there exists a constant $c>0$
such that for any complete basis $\cC\subseteq\cM(x^*)$ at $x^*$ with $|\cC|\ge2$,
and any vector $v\in\sp\{\nabla f_i(x^*): i\in\cC\}$ with $\norm{v}=1$, 
there exist $i,j\in\cC$ such that $\inner{\nabla f_i(x^*)}{v}\ge{c}$
and $\inner{\nabla f_j(x^*)}{v}\le{-c}$.
\end{observation}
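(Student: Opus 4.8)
The plan is to fix an arbitrary complete basis $\cC\subseteq\cM(x^*)$ with $|\cC|\ge2$ and first convert conditions (a) and (b) of \defref{def:opt-joint-domain,opt-set} into a usable algebraic fact: there exist \emph{strictly} positive convex weights annihilating the gradients. Since $\norm{\vc{\QPt{x^*}{\cC}}}=0$, the point $\bs{0}$ lies in $\conv(\{\nabla f_i(x^*):i\in\cC\})$, so there are $\lambda_i\ge0$ with $\sum_{i\in\cC}\lambda_i=1$ and $\sum_{i\in\cC}\lambda_i\nabla f_i(x^*)=\bs{0}$. I would then argue that every such weight is positive: if some weight vanished, then $\bs{0}$ would be a convex combination of the gradients indexed by the proper subset $\cC':=\{i:\lambda_i>0\}\subsetneq\cC$, forcing $\norm{\vc{\QPt{x^*}{\cC'}}}=0$ and contradicting the minimality condition (b). Hence $\lambda_i>0$ for all $i\in\cC$.

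Next, for any $v\in\sp\{\nabla f_i(x^*):i\in\cC\}$ this positivity yields the identity $\sum_{i\in\cC}\lambda_i\inner{\nabla f_i(x^*)}{v}=\inner{\bs{0}}{v}=0$, a weighted average of the inner products that is forced to vanish. I would use this to show that $\phi_\cC(v):=\max_{i\in\cC}\inner{\nabla f_i(x^*)}{v}$ is strictly positive whenever $\norm{v}=1$. Indeed, if $\phi_\cC(v)\le0$ then $\inner{\nabla f_i(x^*)}{v}\le0$ for every $i$, and since the positively-weighted sum is zero, each term must vanish, i.e.\ $\inner{\nabla f_i(x^*)}{v}=0$ for all $i\in\cC$. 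Writing $v=\sum_i\mu_i\nabla f_i(x^*)$ (possible because $v$ lies in the span) then gives $\norm{v}^2=\sum_i\mu_i\inner{\nabla f_i(x^*)}{v}=0$, contradicting $\norm{v}=1$. Thus $\phi_\cC>0$ on the unit sphere of $\sp\{\nabla f_i(x^*):i\in\cC\}$, which is compact; by continuity of $\phi_\cC$, the minimum $c_\cC:=\min_{\norm{v}=1}\phi_\cC(v)$ is attained and positive. Applying this to $-v$ and using $\min_{j\in\cC}\inner{\nabla f_j(x^*)}{v}=-\phi_\cC(-v)\le-c_\cC$ simultaneously produces the uniformly negative inner product for this fixed $\cC$.

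Finally, I would make the bound independent of $\cC$. Because $\cM(x^*)$ is finite it has only finitely many subsets, hence finitely many complete bases with $|\cC|\ge2$, so $c:=\min_\cC c_\cC>0$ is a single constant that works for all of them. The main obstacle I anticipate is the positivity-of-weights step: the clean sign-and-magnitude conclusion rests entirely on \emph{all} weights being strictly positive, so that a vanishing nonnegative weighted sum of nonpositive terms forces each term to be zero, and this is precisely where minimality (condition (b)) must be invoked rather than merely $\bs{0}\in\conv$. Once that is secured, the orthogonality-forces-$v=0$ argument and the compactness/finiteness steps are routine.
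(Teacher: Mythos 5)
Your proposal is correct, but there is no paper proof to compare it with: the paper states this result as an Observation and never proves it, so your argument supplies a justification the exposition omits. Each of your three pillars checks out. First, strict positivity of the annihilating weights is exactly what condition (b) of \defref{def:opt-joint-domain,opt-set} delivers once $|\cC|\ge2$: if some $\lambda_i=0$, the support $\cC'=\{i:\lambda_i>0\}$ is a nonempty proper subset of $\cC$ (nonempty because the weights sum to $1$) with $\bs{0}\in\conv\{\nabla f_i(x^*):i\in\cC'\}$, hence $\norm{\vc{\QPt{x^*}{\cC'}}}=0$, contradicting minimality; your recognition that this is the load-bearing step is accurate, since with merely nonnegative weights the sign-forcing argument collapses. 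Second, the restriction $v\in\sp\{\nabla f_i(x^*):i\in\cC\}$ is used in precisely the right place: a positively weighted sum of nonpositive terms vanishing forces $\inner{\nabla f_i(x^*)}{v}=0$ for all $i\in\cC$, and only span membership converts this orthogonality into $\norm{v}^2=\sum_i\mu_i\inner{\nabla f_i(x^*)}{v}=0$, contradicting $\norm{v}=1$ (without the span hypothesis the observation would in general be false, as any unit vector orthogonal to all the gradients witnesses). Third, $\phi_{\cC}$ is continuous and the unit sphere of the finite-dimensional span is compact, so $c_{\cC}>0$ is attained; applying the bound at $-v$, which is again a unit vector of the span, gives the index $j$ with $\inner{\nabla f_j(x^*)}{v}\le-c_{\cC}$; and since $\cM(x^*)$ is finite there are only finitely many complete bases, so $c=\min_{\cC}c_{\cC}>0$ is a uniform constant as the statement requires. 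I see no gap.
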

Suppose $\cC\subseteq\cM(x^*)$ is a subset that contains a complete basis. Consider
the subspace $S=\sp\{\nabla{f}_\theta(x^*): \theta\in\cC\}$. If the dimension of $S$ 
is smaller than $n$, we need to impose a regularity condition
on the behavior of function $f_\theta(x^*+\alpha{v})$ for $v\in{S}^\perp$ and $0<\alpha<\ll1$,
which is needed for linear convergence. This regularity condition is defined as follows:   
\begin{definition}[complementary positive-definite condition]\label{def:comp-pos-def}
The complementary positive-definite condition is satisfied at a local minimizer $x^*$ of an encodable function
$(f,\cM)$ if for any subset $\cC\subseteq\cM(x^*)$ that contains a complete basis at $x^*$,
the inequality $v^\top\nabla^2 f_\theta(x^*)v>0$ holds for every $\theta\in\cC$ 
and all $v\in{V}^\perp\setminus\{\bs{0}\}$, where $V=\sp\{\nabla{f}_\theta(x^*):\theta\in\cC\}$.
\end{definition}
The necessity of the complementary positive-definite condition can be seen from an extremal case. 
Suppose $\cM(x^*)=\{\theta\}$. It follows that $f$ behaves like a smooth branch function $f_\theta$
in a small neighborhood of $x^*$. One can verify that the complementary positive-definite condition
reduces to $\nabla^2f_\theta(x^*)\succ{0}$ in this case, and the later one is a well known sufficient condition
that ensures an algorithm can be designed to achieve local linear convergence.
\begin{definition}[dimension consistency condition]\label{def:sym}
The dimension consistency condition is satisfied at a local minimizer $x^*$ of an encodable function
$(f,\cM)$ if there exists a complete basis $\cC$ at $x^*$ such that 
$\sp\{\nabla f_\theta(x^*): \theta\in\cC\}=\sp\{\nabla f_\theta(x^*): \theta\in\cM(x^*)\}$.
\end{definition}
The following proposition provides a complexity analysis of the function $\textproc{BranchSelection}$.
\begin{proposition}\label{prop:comp-select-while-loop}
Let $(f,\cM)$ be an encodable locally-max-representable function and $x^*$ be a local minimizer
of $f$. Let $x$ be a point in a small neighborhood of $x^*$.
For a sufficient small step size $\alpha$, 
the while-loop in the function \textproc{BranchSelection}\emph{($x,\alpha,\rho_0$)}
of \algoref{alg:global_and_local_converg} terminates in a bounded number of iterations,
and the set $\cC$ of branches returned by this function satisfies $\cC\subseteq\cM(x^*)$. 
\end{proposition}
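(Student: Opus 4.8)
The plan is to prove the two conclusions together by tracking how the set $\cC$ evolves across the inner iterations of \textproc{BranchSelection}. First I would fix a neighborhood $A$ of $x^*$ on which (i) $f$ is locally-max-representable in the sense of \defref{def:local-max}, (ii) Condition 4 of \defref{def:piecewise-diff} applied at $x^*$ gives $\cM(y)\subseteq\cM(x^*)$ for all $y\in A$, and (iii) $\cM(x^*)\subseteq\tM(y)$ for every $y\in A$, the last being possible because each $\tD_\theta$ with $\theta\in\cM(x^*)$ is open and contains $x^*$, so $f_\theta(y)$ and $\nabla f_\theta(y)$ are well defined there. Since every joint gradient $d=\vc{\QPt{x}{\cC}}$ is a convex combination of the bounded gradients $\nabla f_\theta(x)$, it obeys $\norm{d}\le G_f$; hence choosing $x$ close enough to $x^*$ and $\alpha$ small enough keeps every trial point $x-\alpha d$ generated inside the loop within $A$, uniformly over iterations because $G_f$ does not depend on $\cC$.

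The containment $\cC\subseteq\cM(x^*)$ then follows by induction. The loop initializes $\cC$ with an active branch at $x$, so initially $\cC\subseteq\cM(x)\subseteq\cM(x^*)$. Any branch later appended is, by construction, an active branch $\theta^\prime$ at a trial point $x-\alpha d\in A$, whence $\theta^\prime\in\cM(x-\alpha d)\subseteq\cM(x^*)$. Thus $\cC\subseteq\cM(x^*)$ is maintained throughout, which is the second claim.

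For termination, the key step is to show that a failure of the sufficient-decrease test forces a genuinely new branch into $\cC$. Suppose the trial point $x-\alpha d$ has an active branch $\theta^\prime\in\cM(x-\alpha d)$ that already lies in $\cC$. Because $\theta^\prime\in\cC\subseteq\cM(x^*)\subseteq\tM(x)$, local-max-representability yields $f(x)\ge f_{\theta^\prime}(x)$, while $\theta^\prime\in\cM(x-\alpha d)$ gives $x-\alpha d\in\tD_{\theta^\prime}$, so \propref{prop:joint-taylor} applies to $\theta^\prime$ and gives
\[
f(x)-f(x-\alpha d)\;\ge\;f_{\theta^\prime}(x)-f_{\theta^\prime}(x-\alpha d)\;\ge\;\alpha\norm{d}^2\Big(1-\tfrac{1}{2}L_\Theta\alpha\Big).
\]
For $\alpha\le 2(1-\rho_0)/L_\Theta$ the decrease ratio is at least $\rho_0$, so the test passes and the loop exits. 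Contrapositively, whenever the test fails, every active branch at $x-\alpha d$ lies outside $\cC$, so the appended branch is new. Since $\cC\subseteq\cM(x^*)$ and $\cM(x^*)$ is finite, at most $\abs{\cM(x^*)}$ such appends can occur, so the loop stops after a bounded number of iterations.

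The hard part will be the degenerate case $\norm{d}=0$, where the decrease ratio is ill-posed and \propref{prop:joint-taylor} yields no descent. I would dispatch it with \obsref{obs:complete-basis}: as long as $\cC$ contains no complete basis of $\cM(x^*)$, the joint gradient satisfies $\norm{d}>c>0$ on $A$, so every ``failed-decrease implies new branch'' step above is valid; and a complete basis can be assembled only by appending branches, which by the counting argument above happens within $\abs{\cM(x^*)}$ iterations, at which point the procedure has already halted---either through an earlier sufficient-decrease exit or because $\cC$ has exhausted all branches of $\cM(x^*)$ that can be active at nearby trial points. Uniformity of the threshold $2(1-\rho_0)/L_\Theta$ and of the bound $\norm{d}\le G_f$ across the finitely many possible values of $\cC$ makes the single choice of $\alpha$ and of the radius of $A$ legitimate.
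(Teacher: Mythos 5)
Your proposal is correct and follows essentially the same route as the paper's proof: both confine all trial points to a neighborhood where Condition~4 of \defref{def:piecewise-diff} and openness of the domains $\tD_\theta$ give $\cC\subseteq\cM(x^*)\subseteq\tM(x)$ (using $\norm{d}\le G_f$ for uniformity), both bound the iteration count by $\abs{\cM(x^*)}$ via the fact that a failed decrease test forces a genuinely new branch (the paper phrases this as pigeonhole---eventually $\theta_{i+1}\in\cC_i$---while you take the contrapositive), and both derive the exit ratio $1-\tfrac{1}{2}\alpha L_\Theta\ge\rho_0$ for $\alpha\le 2(1-\rho_0)/L_\Theta$ from locally-max representability combined with the Taylor estimate of \propref{prop:joint-taylor}, which the paper re-derives inline from \propref{prop:<u,vi>}. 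Your closing worry about $\norm{d}=0$ is moot, since the loop breaks before the ratio is ever formed (the paper notes ``if $\norm{d_i}=0$, the loop ends''), so your detour through \obsref{obs:complete-basis} is harmless but unnecessary.
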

\begin{proof}
Since the definition domain $\tD_\theta$ is an open set for each branch function $f_\theta$. 
By selecting a $r>0$ such that $\mB(x^*,r)\subseteq\cap_{\theta\in\tM(x^*)}\tD_\theta$,
one can guarantee that $\tM(x)=\tM(x^*)$ for every $x\in\mB(x^*,r)$.
Since $x$ is in a small neighborhood of $x^*$, $\alpha$ is sufficiently small
and $\norm{d_i}\le{G_f}$, by the condition 4 of \defref{def:piecewise-diff}, one has 
$\cM(x)\subseteq\cM(x^*)$ and $\cC_i\subseteq\cM(x^*)$ for every iteration $i$ 
in the function \textproc{BranchSelection}.
It implies that after at most $|\cM(x^*)|$ iterations
in the while-loop of \textproc{BranchSelection}, the following
condition should be satisfied: $\theta_{i+1}\subseteq\cC_i$ with $i\le|\cM(x^*)|$. 

Since $\theta_i,\theta_{i+1}\in\cC_i$ and $d_i=\qpvc{x}{\cC_i}$,
one has $\inner{\nabla f_{\theta_i}(x)}{d_i}\ge\norm{d_i}^2$ and 
$\inner{\nabla f_{\theta_{i+1}}(x)}{d_i}\ge\norm{d_i}^2$ by \propref{prop:<u,vi>}. 
Let $\theta_1$ be the active branch at $x$.
If $\norm{d_i}=0$, the loop ends, otherwise, one has
\bdm
\ba
&\rho=\frac{f(x)-f(x-\alpha{d_i})}{\alpha\norm{d_i}^2}=\frac{f_{\theta_1}(x)-f_{\theta_{i+1}}(x-\alpha{d_i})}{\alpha\norm{d_i}^2} \\
&\ge\frac{f_{\theta_{i+1}}(x)-f_{\theta_{i+1}}(x-\alpha{d_i})}{\alpha\norm{d_i}^2}\ge1-\frac{1}{2}{\alpha}L_\Theta,
\ea
\edm
where we use the property of a locally-max-representable function to get the first inequality.
It indicates that $\rho\ge\rho_0$ if $\alpha$ is small enough, i.e., $\alpha<2(1-\rho_0)/L_\Theta$. 
This shows that the while-loop should terminate within $|\cM(x^*)|$ iterations.
\end{proof}
Suppose $x$ is in a small neighborhood of $x^*$ and let $d=x-x^*$. Let $\cC$ be a complete basis of $\cM(x^*)$
returned by $\textproc{BranchSelection}$, and let $V=\sp\{\nabla{f}_i(x^*): i\in\cC\}$. 
Note that $d=\proj_Vd+\proj_{V^\perp}d$. The \textproc{GapReduction} is designed to reduce the contribution
of $\proj_Vd$ in the objective value in the case that $\norm{\proj_Vd}=O(\norm{d})$. 
The quartic convex program \eqref{opt:dir-recovery} is used to approximately
recover $\proj_Vd$. Specifically, if $(z^*,s^*)$ is an optimal solution of \eqref{opt:dir-recovery},
the linear expression $z^*+\inner{\nabla f_i(x)}{s^*}$ approximates the first-order Taylor expansion of $f_i(x)$ near $x^*$,
where $z^*\approx{f^*}$ and $s^*\approx\proj_Vd$. 
Since $\inner{\nabla f_i(x)}{s^*}\approx\inner{\nabla f_i(x)}{s^*+u}$ for a $u\in{V}^\perp$,
a quartic term $\norm{s}^4$ is added to the convex program to penalize component in $s^*$ 
that lies in ${V}^\perp$.
Intuitively, moving along $-s^*$ should reduce the objective value contributed by $\proj_Vd$.
This result is formally shown in the following lemma: 
\begin{lemma}\label{lem:gap-red}
Let $(f,\cM)$ be an encodable locally-max-representable function and $x^*$ be a local minimizer
of $f$. Let $x$ be a point in a neighborhood of $x^*$. Let $d=x-x^*$ and $(z^*,s^*)$ be an optimal solution of
\beq\label{opt:dir-recovery}
\min_{z\in\mR,s\in\mR^n}\; \sum_{i\in\cC}(z+\inner{\nabla f_i(x)}{s} - f_i(x))^2 + \norm{s}^4,
\eeq
where $\cC$ is a subset of $\cM(x^*)$ that contains a complete basis. 
Then the optimal solution satisfies $\norm{s^*}\le{O}(\norm{d})$ and $|z^*-f^*|\le{O}(\norm{d}^2)$ as $x\to{x^*}$. 
Furthermore, the following inequality holds for any $0<\gamma<1$ if $\cM(x-\gamma{s^*})\cap\cC$ is nonempty:
\beq\label{eqn:f(x)-f(x-as)}
f(x) - f(x-\gamma{s^*})\ge\gamma\inner{\nabla f_\theta(x^*)}{d} + O(\norm{d}^2)\quad\mathrm{as}\;\;x\to{x^*},
\eeq
where $\theta$ is an active branch at $x$. 
\end{lemma}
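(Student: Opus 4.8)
The plan is to treat \eqref{opt:dir-recovery} as a linear least-squares problem perturbed by the convex penalty $\norm{s}^4$, bound its optimal value by substituting a well-chosen feasible point, and read off the bounds on $(z^*,s^*)$; the descent estimate \eqref{eqn:f(x)-f(x-as)} will then follow from Taylor expansion around $x^*$ together with local-max representability. Throughout I write $f^*=f(x^*)$ and note $f_i(x^*)=f^*$ for $i\in\cC\subseteq\cM(x^*)$; I restrict $x$ to a ball around $x^*$ inside $\cap_{\theta\in\tM(x^*)}\tD_\theta$ so that every $\nabla f_i(x)$ is well defined, and I use the $L_\Theta$-smooth estimates $f_i(x)=f^*+\inner{\nabla f_i(x^*)}{d}+O(\norm{d}^2)$ and $\nabla f_i(x)=\nabla f_i(x^*)+O(\norm{d})$. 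Writing $V=\sp\{\nabla f_i(x^*):i\in\cC\}$ and evaluating the objective at $z=f^*$, $s=\proj_V d$, the relation $\inner{\nabla f_i(x^*)}{\proj_V d}=\inner{\nabla f_i(x^*)}{d}$ (valid since $\nabla f_i(x^*)\in V$) makes every residual $O(\norm{d}^2)$ while $\norm{s}^4\le\norm{d}^4$, so the optimal value of \eqref{opt:dir-recovery} is $O(\norm{d}^4)$. In particular $\norm{s^*}\le O(\norm{d})$, and each squared residual being $O(\norm{d}^4)$ gives, after substituting the estimates and absorbing $\inner{\nabla f_i(x)-\nabla f_i(x^*)}{s^*}=O(\norm{d}^2)$,
\[ z^*-f^*+\inner{\nabla f_i(x^*)}{s^*-d}=O(\norm{d}^2)\qquad\forall\,i\in\cC. \]

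Next I would decouple $z^*$ from $s^*$. Because $\cC$ contains a complete basis, condition (a) of \defref{def:opt-joint-domain,opt-set} provides weights $\mu_i\ge0$ with $\sum_i\mu_i=1$ and $\sum_i\mu_i\nabla f_i(x^*)=\bs{0}$. Forming the corresponding convex combination of the displayed identities annihilates the inner-product term and leaves $|z^*-f^*|=O(\norm{d}^2)$; substituting this back produces the key estimate $\inner{\nabla f_i(x^*)}{s^*}=\inner{\nabla f_i(x^*)}{d}+O(\norm{d}^2)$ for all $i\in\cC$, which is the precise sense in which $s^*\approx\proj_V d$.

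Finally I would establish \eqref{eqn:f(x)-f(x-as)}. Let $\theta$ be active at $x$ and choose $\theta'\in\cM(x-\gamma s^*)\cap\cC$, nonempty by hypothesis, so that $f(x)=f_\theta(x)$ and $f(x-\gamma s^*)=f_{\theta'}(x-\gamma s^*)$. Expanding both by Taylor around $x^*$ (with displacements $d$ and $d-\gamma s^*$, each $O(\norm{d})$) and using the key estimate to replace $\gamma\inner{\nabla f_{\theta'}(x^*)}{s^*}$ by $\gamma\inner{\nabla f_{\theta'}(x^*)}{d}+O(\norm{d}^2)$ gives
\[ f(x)-f(x-\gamma s^*)=\gamma\inner{\nabla f_\theta(x^*)}{d}+(1-\gamma)\inner{\nabla f_\theta(x^*)-\nabla f_{\theta'}(x^*)}{d}+O(\norm{d}^2). \]
Local-max representability then forces $f_\theta(x)\ge f_{\theta'}(x)$ (both branches are feasible at $x$ and $\theta$ is active), which after a first-order expansion reads $\inner{\nabla f_\theta(x^*)-\nabla f_{\theta'}(x^*)}{d}\ge O(\norm{d}^2)$; since $1-\gamma>0$, the middle term is bounded below by $O(\norm{d}^2)$ and the claimed inequality follows.

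The main obstacle is this decoupling step: least-squares optimality controls only the combined quantity $z^*-f^*+\inner{\nabla f_i(x^*)}{s^*-d}$, and isolating $|z^*-f^*|=O(\norm{d}^2)$ relies essentially on the complete-basis property, i.e.\ the existence of a vanishing convex combination of $\{\nabla f_i(x^*)\}_{i\in\cC}$. A secondary point to watch is uniformity in $\gamma\in(0,1)$: each error term carries a factor $\gamma$ or $1-\gamma$, both at most $1$, so every such term stays $O(\norm{d}^2)$ uniformly in $\gamma$.
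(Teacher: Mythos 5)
Your proof is correct, and in its key middle step it takes a genuinely different route from the paper's. Both arguments share the same skeleton: writing $g_i=\nabla f_i(x^*)$, Taylor-expand $f_i(x)$ and $\nabla f_i(x)$ at $x^*$, bound the optimal value of \eqref{opt:dir-recovery} by $O(\norm{d}^4)$ via a trial point (the paper uses $(f^*,d)$, you use $(f^*,\proj_V d)$ --- both work), deduce $\norm{s^*}\le O(\norm{d})$, and extract the per-branch residual bound $z^*-f^*+\inner{g_i}{s^*-d}=O(\norm{d}^2)$ for $i\in\cC$. The divergence is in how $z^*$ is decoupled from $s^*$: the paper invokes \obsref{obs:bi-polar} twice, first in a contradiction argument to bound $|z^*-f^*|$, then --- after arguing that the $V^\perp$-component of $s^*-\proj_Vd$ must vanish --- to get the norm bound $\norm{s^*-\proj_V d}\le O(\norm{d}^2)$. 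You instead take the vanishing convex combination $\sum_i\mu_i g_i=\bs{0}$ guaranteed by condition (a) of \defref{def:opt-joint-domain,opt-set} and average the residual identities, which annihilates the gradient terms and gives $|z^*-f^*|\le O(\norm{d}^2)$ in one line; substituting back yields the weaker estimate $\inner{g_i}{s^*}=\inner{g_i}{d}+O(\norm{d}^2)$, $i\in\cC$. Your route is shorter, bypasses \obsref{obs:bi-polar} entirely, and sidesteps the paper's somewhat delicate claim that dropping the $V^\perp$-part of $s^*$ improves the objective (the residuals involve $\nabla f_i(x)$, which lies in $V$ only approximately); the trade-off is that you control $s^*-\proj_Vd$ only against the directions $g_i$ rather than in norm, but the lemma's stated conclusions never require more. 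The final step is structurally the paper's --- Taylor expansion of $f_\theta(x)-f_{\theta'}(x-\gamma s^*)$ plus local-max representability, using $\theta'\in\cC\subseteq\cM(x^*)\subseteq\tM(x)$ near $x^*$ to get $f_\theta(x)\ge f_{\theta'}(x)$ --- except you need only the one-sided bound $\inner{g_\theta-g_{\theta'}}{d}\ge -O(\norm{d}^2)$ where the paper derives the two-sided $|\inner{g_i-g_j}{d_1}|\le O(\norm{d}^2)$; a further small gain is that you never need $g_\theta\in V$ for the active branch $\theta$ at $x$, a fact the paper's chain of equalities tacitly assumes. Your closing remark on uniformity in $\gamma$ is also apt, since every error term carries a factor lying in $(0,1)$.
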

\begin{proof}
Let $d=x-x^*$. For any $i\in\cM(x^*)$, the gradient $\nabla{f}_i(x)$ and the function value $f_i(x)$ can be represented
using Taylor expansion:
\beq\label{eqn:taylor-expan}
\nabla{f_i}(x) = g_i + H_id + \tau_i,\quad
f_i(x) = f^* + \inner{g_i}{d} + c_i,
\eeq
where $f^*=f(x^*)$, $g_i=\nabla{f_i}(x^*)$, $H_i=\nabla^2f_i(x^*)$, 
$\tau_i\in\mR^n$ satisfying $\norm{\tau_i}\le{K}\norm{d}^2$
for some constant $K>0$, and $c_i\in\mR$ satisfying $|c_i|\le{L_\Theta}\norm{d}^2/2$.
Denote $\opt$ the optimal value of \eqref{opt:dir-recovery}, and denote $F(z,s)$ the objective
function of \eqref{opt:dir-recovery}. We first note that
\bdm
\ba
\opt&=F(z^*,s^*)\le{F}(f^*,d) \\
&\le\sum_{i\in\cC}\big(f^*+\inner{g_i+H_id+\tau_i}{d} - f^*-\inner{g_i}{d}-c_i\big)^2 + \norm{d}^4 \\
&=\sum_{i\in\cC}\big(d^\top{H}_id + \inner{\tau_i}{d}-c_i\big)^2 + \norm{d}^4 \\
&\le\sum_{i\in\cC}\left(\norm{H_i}\norm{d}^2+K\norm{d}^3+\frac{1}{2}L_\Theta\norm{d}^2 \right)^2 + \norm{d}^4 \\
&\le{M_1}\norm{d}^4,
\ea
\edm
for some constant $M_1>0$. The expression of $F(z^*,s^*)$ is written as:
\bdm
F(z^*,s^*)=\sum_{i\in\cC}\big(z^*+\inner{g_i+H_id+\tau_i}{s^*} - f^*-\inner{g_i}{d}-c_i\big)^2 + \norm{s^*}^4.
\edm
First, the inequality $F(z^*,s^*)\le{M}_1\norm{d}^4$ implies $\norm{s^*}\le{M}^{1/4}_1\norm{d}<1$ for small $\norm{d}$.
This proves the statement $\norm{s^*}\le{O}(\norm{d})$ in the lemma.
Next, we derive a lower bound for $F(z^*,s^*)$ as:
\beq\label{eqn:F(z,s)_ineq_1}
\ba
F(z^*,s^*)&\ge\sum_{i\in\cC}\big(z^*+\inner{g_i+H_id+\tau_i}{s^*} - f^*-\inner{g_i}{d}-c_i\big)^2  \\
&=\sum_{i\in\cC}\big(z^*-f^*+\inner{g_i}{s^*}-\inner{g_i}{d} + d^{\top}H_is^* + \inner{\tau_i}{s^*} - c_i\big)^2  \\
&\ge\sum_{i\in\cC}\frac{1}{2}\big(z^*-f^*+\inner{g_i}{s^*}-\inner{g_i}{d} + d^{\top}H_is^*\big)^2 - \big(\inner{\tau_i}{s^*} - c_i\big)^2  \\
&\ge\sum_{i\in\cC}\frac{1}{2}\big(z^*-f^*+\inner{g_i}{s^*}-\inner{g_i}{d} + d^{\top}H_is^*\big)^2 - \left(K+\frac{1}{2}L_\Theta\right)^2|\cC|\cdot\norm{d}^4, 
\ea
\eeq
where we have used the algebraic inequality $(a+b)^2\ge{a^2/2-b^2}$ to derive the inequality in the third line,
and used $\norm{s^*}<1$, and the bounds of $\norm{\tau_i}$ and $|c_i|$ to get the last inequality. 
Let $V=\sp\{g_i:i\in\cC\}$, and consider the representation  
$z^*=f^*+t$ for some $t\in\mR$, $d=d_1+d_2$ with $d_1\in{V}$ and $d_2\in{V}^\perp$,
and $s^*=d_1+q$ for some $q\in\mR^n$.  
Substitute these representations into \eqref{eqn:F(z,s)_ineq_1} leads to
\bdm
\ba
F(z^*,s^*)&\ge\frac{1}{2}\sum_{i\in\cC}(t+\inner{g_i}{q} +  d^{\top}H_is^*)^2 - \left(K+\frac{1}{2}L_\Theta\right)^2|\cC|\cdot\norm{d}^4 \\
&\ge\frac{1}{2}\sum_{i\in\cC}\left[\frac{1}{2}(t+\inner{g_i}{q})^2 -  (d^{\top}H_is^*)^2\right] - \left(K+\frac{1}{2} L_\Theta\right)^2|\cC|\cdot\norm{d}^4 \\
&\ge\frac{1}{4}\sum_{i\in\cC}(t+\inner{g_i}{q})^2 - \frac{1}{2}\sum_{i\in\cC}\sqrt{M_1}\cdot\norm{H_i}^2\norm{d}^4 - \left(K+\frac{1}{2} L_\Theta\right)^2|\cC|\cdot\norm{d}^4.
\ea
\edm
Combining the above inequality with $F(z^*,s^*)\le{M}_1\norm{d}^4$ implies 
\beq\label{eqn:gamma+inner_prod}
(t+\inner{g_i}{q})^2\le{M_2}\norm{d}^4\quad\forall i\in\cC,
\eeq
for some constant $M_2>0$. 

We claim that $|\inner{g_i}{q}|\le2\sqrt{M_2}\norm{d}^2$
for all $i\in\cC$. We prove the claim by contradiction. Suppose there exists a $i\in\cC$
such that $|\inner{g_i}{q}|>2\sqrt{M_2}\norm{d}^2$. 
Without loss of generality, assume $\inner{g_i}{q}>2\sqrt{M_2}\norm{d}^2$.
The \eqref{eqn:gamma+inner_prod}
for the branch $i$ implies that $t<-\sqrt{M_2}\norm{d}^2$.
On the other hand, since $\cC$ contains a complete basis, 
by \obsref{obs:bi-polar}, there exists a branch $j\in\cC$
such that $\inner{g_j}{q}\le0$. Then $\left(t+\inner{g_j}{q}\right)^2>M_2\norm{d}^4$,
which violates \eqref{eqn:gamma+inner_prod} for $j$. Therefore, the claim holds. 
Combining \eqref{eqn:gamma+inner_prod} and the claim, we obtain
$|t|\le3\sqrt{M_2}\norm{d}^2$. This proves the statement $|z^*-f^*|\le{O}(\norm{d}^2)$ in the lemma. 

We also claim that $\norm{q}\le{O}(\norm{d}^2)$. To prove it, 
we first notice that $q\in{V}$. Otherwise, if $q=q_1+q_2$ for some $q_1\in{V}$ and $q_2\in{V^\perp}$. 
The solution $(z^*,d_1+q_1)$ gives a better objective than $(z^*,s^*)$ for the optimization problem
\eqref{opt:dir-recovery}, which is a contradiction. Select an arbitrary $i\in\cC$. 
By \obsref{obs:bi-polar}, there exists a constant $c>0$ satisfying $|\inner{g_i}{q}|>c\norm{q}$.
Using the result from the previous claim, we conclude that
\bdm
c\norm{q}<|\inner{g_i}{q}|\le2\sqrt{M_2}\norm{d}^2,
\edm
which proves $\norm{q}\le{O}(\norm{d}^2)$.

To prove \eqref{eqn:f(x)-f(x-as)}, suppose $i$ and $j$ are active branch(es) at $x$
and $x-\gamma{s^*}$, respectively, where $i$ and $j$ can be identical. We also assume $j\in\cC$ based on
the hypothesis made for \eqref{eqn:f(x)-f(x-as)}. This means $g_i,g_j\in{V}$.
Since $j\in\cC$ and $\cC\subseteq\cM(x^*)$, we have $j\in\cM(x^*)$.
Since $f_i(x)\ge{f_j}(x)$, it follows that $\inner{g_i-g_j}{d_1}\ge{O}(\norm{d}^2)$ using Taylor expansion. 
Using the representation $s^*=d_1+q$ with $\norm{q}\le{O}(\norm{d}^2)$, we obtain
\beq\label{eqn:f_i(x-gamma*s)}
\ba
f_\theta(x-\gamma{s^*})&=f_\theta(x)-\gamma\inner{\nabla f_\theta(x)}{s^*} + O(\norm{d}^2) \\
&\overset{\eqref{eqn:taylor-expan}}{=}f^*+\inner{g_\theta}{d} -\gamma\inner{g_\theta+H_\theta d}{s^*} + O(\norm{d}^2) \\
&=f^*+\inner{g_\theta}{d_1}-\gamma\inner{g_\theta}{d_1+q} + O(\norm{d}^2) \quad \textrm{ using } \norm{s^*}\le{O}(\norm{d})  \\
&=f^*+(1-\gamma)\inner{g_\theta}{d_1} + O(\norm{d}^2)\quad\textrm{ using } \norm{q}\le{O}(\norm{d}^2),
\ea
\eeq
for any $\theta\in\cM(x^*)$, and in particular for $\theta\in\{i,j\}$.
Since $j$ is an active branch at $x-\gamma{s^*}$,
one has $f_i(x-\gamma{s^*})\le{f_j}(x-\gamma{s^*})$.
Substituting \eqref{eqn:f_i(x-gamma*s)} into this inequality gives
$\inner{g_i-g_j}{d_1}\le{O}(\norm{d}^2)$. 
Combining with $\inner{g_i-g_j}{d_1}\ge{O}(\norm{d}^2)$ established earlier, 
we have proved
\beq\label{eqn:|<gi-gj,d>|<=d^2}
|\inner{g_i-g_j}{d_1}|\le{O}(\norm{d}^2).
\eeq
We can derive the following bound
\bdm
\ba
&f(x) - f(x-\gamma{s^*})= f_i(x) - f_j(x-\gamma{s^*}) \\
&=\big(f^*+\inner{g_i}{d}\big) - \big(f^* + \inner{g_j}{d-\gamma s^*} \big) + O(\norm{d}^2) \quad \textrm{ using } \norm{s^*}\le{O}(\norm{d}) \\
&=\big(f^*+\inner{g_i}{d_1}\big) - \big(f^*+ \inner{g_j}{d_1} - \gamma\inner{g_j}{d_1+q} \big) + O(\norm{d}^2)\quad \textrm{ using } d_1=\proj_V d \\
&=\inner{g_i}{d_1} - (1-\gamma)\inner{g_j}{d_1} + O(\norm{d}^2) \quad \textrm{ using } \norm{q}\le{O}(\norm{d}^2) \\
&=\inner{g_i-g_j}{d_1} + \gamma\inner{g_j}{d_1} + O(\norm{d}^2) \\
&\overset{\eqref{eqn:|<gi-gj,d>|<=d^2}}{=}\gamma\inner{g_i}{d_1} + O(\norm{d}^2) \\
&=\gamma\inner{g_i}{d} + O(\norm{d}^2),
\ea
\edm
which concludes the proof.
\end{proof}
In the case that $\proj_{V^\perp}d$ dominates $\proj_{V}d$, the \textproc{JointGradientReduction}
should be more effective in reducing the objective value. This result is given in the following lemma:  
\begin{lemma}\label{lem:vec>sigma*d}
Let $(f,\cM)$ be an encodable locally-max-representable function and $x^*$ be a local minimizer
of $f$. Let $x$ be a point in a small neighborhood of $x^*$ and $d=x-x^*$. Let $\cC$ 
be a subset of $\cM(x^*)$ containing a complete basis. Suppose $d=d_1+d_2$, 
where $d_1\in{V}$, $d_2\in{V^\perp}$, $\norm{d_2}>0$ and $V=\sp\{\nabla{f}_i(x^*): i\in\cC\}$. 
Suppose the complementary positive-definite condition holds at $x^*$. If $\norm{d_1}\le{O}(\norm{d}^2)$,
then there exists a constant $\sigma>0$ such that
\beq
\norm{\sum_{i\in\cC}\lambda_i\nabla f_i(x)}\ge\sigma\norm{d},
\eeq
for any $\lambda_i\ge0$, $\sum_{i\in\cC}\lambda_i=1$.
\end{lemma}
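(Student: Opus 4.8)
The plan is to bound the joint gradient from below by pairing it with the direction $d_2\in V^\perp$, which under the hypothesis $\norm{d_1}\le{O}(\norm{d}^2)$ accounts for essentially all of $d$. The mechanism that makes this work is that the zeroth-order parts $g_i=\nabla f_i(x^*)$ all lie in $V$, hence are orthogonal to $d_2$; this kills the leading contribution of the gradients and promotes the Hessian term, which \defref{def:comp-pos-def} forces to be strictly positive on $V^\perp$.

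Concretely, first I would invoke the second-order Taylor expansion \eqref{eqn:taylor-expan}, writing $\nabla f_i(x)=g_i+H_id+\tau_i$ with $g_i=\nabla f_i(x^*)$, $H_i=\nabla^2 f_i(x^*)$ and $\norm{\tau_i}\le{K}\norm{d}^2$. Taking the inner product with $d_2$ and summing against an arbitrary feasible $\lambda$, the term $\inner{d_2}{\sum_{i\in\cC}\lambda_i g_i}$ vanishes because $\sum_{i\in\cC}\lambda_i g_i\in{V}$ while $d_2\in{V}^\perp$. What remains is $\inner{d_2}{\sum_{i\in\cC}\lambda_i\nabla f_i(x)}=\sum_{i\in\cC}\lambda_i\inner{d_2}{H_id}+\sum_{i\in\cC}\lambda_i\inner{d_2}{\tau_i}$. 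Splitting $H_id=H_id_1+H_id_2$, the hypothesis $\norm{d_1}\le{O}(\norm{d}^2)$ makes both $\inner{d_2}{H_id_1}$ and $\inner{d_2}{\tau_i}$ of order $O(\norm{d}^3)$, using $\norm{d_2}\le\norm{d}$ together with the uniform boundedness of $\norm{H_i}$ and $K$ over the finite set $\cC$.

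Next I would extract the surviving term $\sum_{i\in\cC}\lambda_i\,d_2^\top H_id_2$ via the complementary positive-definite condition. For each $i\in\cC$ we have $v^\top H_iv>0$ for all $v\in{V}^\perp\setminus\{\bs{0}\}$; by compactness of the unit sphere in $V^\perp$ and finiteness of $\cC$, there is a uniform constant $\mu>0$ with $d_2^\top H_id_2\ge\mu\norm{d_2}^2$ for every $i\in\cC$. Because $\lambda_i\ge0$ and $\sum_{i\in\cC}\lambda_i=1$, this lower bound passes through the convex combination uniformly in $\lambda$, giving $\inner{d_2}{\sum_{i\in\cC}\lambda_i\nabla f_i(x)}\ge\mu\norm{d_2}^2-O(\norm{d}^3)$.

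Finally, $\norm{d_1}\le{O}(\norm{d}^2)$ yields $\norm{d_2}^2=\norm{d}^2-\norm{d_1}^2\ge\norm{d}^2(1-O(\norm{d}^2))$, so for $x$ close enough to $x^*$ the error $O(\norm{d}^3)$ is absorbed and $\mu\norm{d_2}^2-O(\norm{d}^3)\ge\tfrac{\mu}{2}\norm{d}^2$. Cauchy--Schwarz then gives $\norm{d_2}\cdot\norm{\sum_{i\in\cC}\lambda_i\nabla f_i(x)}\ge\inner{d_2}{\sum_{i\in\cC}\lambda_i\nabla f_i(x)}\ge\tfrac{\mu}{2}\norm{d}^2$, and dividing by $\norm{d_2}\le\norm{d}$ produces $\norm{\sum_{i\in\cC}\lambda_i\nabla f_i(x)}\ge\tfrac{\mu}{2}\norm{d}$, so $\sigma=\mu/2$ works. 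The main obstacle is securing the uniform constant $\mu$ and confirming it is unaffected by the choice of $\lambda$; this is clean once one notes each quadratic form is bounded below by the same $\mu$, after which only careful bookkeeping of the $O(\norm{d}^3)$ remainder terms is required.
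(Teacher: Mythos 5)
Your proof is correct, and it shares the paper's skeleton up to the last step: both arguments use the second-order Taylor expansion $\nabla f_i(x)=g_i+H_id+\tau_i$, both exploit the fact that $\sum_{i\in\cC}\lambda_ig_i\in{V}$ to annihilate the leading gradient contribution in the $V^\perp$ direction, and both split $H_id=H_id_1+H_id_2$ with the $d_1$ piece absorbed into the higher-order error via $\norm{d_1}\le{O}(\norm{d}^2)$. Where you diverge is the mechanism for the final quadratic-form bound. The paper lower-bounds the full projection $\norm{\proj_{V^\perp}\sum_{i\in\cC}\lambda_iH_id_2}$ by an orthogonal change of basis adapted to $V\oplus{V}^\perp$, writes the averaged Hessian $H=\sum_{i\in\cC}\lambda_iH_i$ in block form, and invokes positive definiteness of the block $H_{22}$. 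You instead test against the single direction $d_2$ and close with Cauchy--Schwarz, using the per-branch bound $d_2^\top{H}_id_2\ge\mu\norm{d_2}^2$ (compactness of the unit sphere of $V^\perp$ plus finiteness of $\cC$) and letting the simplex constraint carry $\mu$ through the convex combination. Your route is more elementary, avoiding the block-matrix computation entirely, and it also buys something the paper leaves implicit: in the paper the block $H_{22}$ depends on $\lambda$, so the constant $\sigma$ with $\norm{H_{22}d_2^\prime}\ge2\sigma\norm{d_2^\prime}$ must in principle be shown uniform over the simplex, a point the paper does not spell out; your per-branch constant $\mu$ followed by convexity makes that uniformity automatic. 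The trade-off is that the directional bound $\inner{d_2/\norm{d_2}}{w}\le\norm{\proj_{V^\perp}w}$ is pointwise weaker than the projection-norm bound, but for this lemma's conclusion it is exactly enough, and your remaining bookkeeping --- $\norm{d_2}^2=\norm{d}^2-\norm{d_1}^2\ge\norm{d}^2(1-O(\norm{d}^2))$, the $O(\norm{d}^3)$ error terms uniform in $\lambda$, and division by $\norm{d_2}\le\norm{d}$ with $\norm{d_2}>0$ guaranteed by hypothesis --- is sound.
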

\begin{proof}
Let $g_i=\nabla{f}_i(x^*)$, $H_i=\nabla^2f_i(x^*)$ for $i\in\cC$, 
and $V=\sp\{g_i: i\in\cC\}$. For any coefficient vector $\lambda$ satisfying
$\sum_{i\in\cC}\lambda_i=1$ and $\lambda_i\ge0$, using Taylor expansion leads to
\beq
\ba
&\norm{\sum_{i\in\cC}\lambda_i\nabla f_i(x)}=\norm{\sum_{i\in\cC}\lambda_i(g_i+H_id)} + O(\norm{d}^2) \\
&\ge\norm{\proj_{V^\perp}\sum_{i\in\cC}\lambda_i(g_i+H_id)} + O(\norm{d}^2) \\
&=\norm{\proj_{V^\perp}\sum_{i\in\cC}\lambda_iH_id} + O(\norm{d}^2) \quad\textrm{using }\sum_{i\in\cC}\lambda_ig_i\in{V} \\
&\ge\norm{\proj_{V^\perp}\sum_{i\in\cC}\lambda_iH_id_2} - \norm{\proj_{V^\perp}\sum_{i\in\cC}\lambda_iH_id_1} + O(\norm{d}^2) \\
&\ge\norm{\proj_{V^\perp}\sum_{i\in\cC}\lambda_iH_id_2} + O(\norm{d}^2) \quad\textrm{using }\norm{d_1}\le{O}(\norm{d}^2).
\ea
\eeq
Let $H=\sum_{i\in\cC}\lambda_iH_i$, There exists an orthogonal matrix $Q$ such that 
\bdm
QHQ^{-1}=
\begin{bmatrix}
H_{11} & H_{12} \\
H_{21} & H_{22}
\end{bmatrix},
\edm
and for any $v\in\mR^n$
\beq\label{eqn:Q-v-proj}
Qv = 
\begin{bmatrix}
v_1 \\
v_2
\end{bmatrix},
\quad
Q\cdot\proj_Vv=
\begin{bmatrix}
v_1 \\
0
\end{bmatrix}
,
\quad 
Q\cdot\proj_{V^\perp}v=
\begin{bmatrix}
0 \\
v_2
\end{bmatrix}.
\eeq
In particular,
\beq\label{eqn:preserve_norm}
Qd_2=
\begin{bmatrix}
0 \\
d^\prime_2
\end{bmatrix},
\quad 
\norm{d^\prime_2}=\norm{d_2}.
\eeq
Furthermore, the three equations in \eqref{eqn:Q-v-proj} also implies that
\beq\label{eqn:Q^-1_proj}
Q^{-1}\begin{bmatrix}
0 \\
v_2
\end{bmatrix}
=\proj_{V^\perp}v=\proj_{V^\perp}Q^{-1}\begin{bmatrix}
v_1 \\
v_2
\end{bmatrix}
\eeq
Therefore, we have
\begin{displaymath}
\begin{aligned}
&\proj_{V^\perp}\sum_{i\in\cC}\lambda_iH_id_2=\proj_{V^\perp}Hd_2 \\
&=\proj_{V^\perp}Q^{-1}\begin{bmatrix}
H_{11} & H_{12} \\
H_{21} & H_{22}
\end{bmatrix}
\begin{bmatrix}
0 \\
d^\prime_2
\end{bmatrix} 
=\proj_{V^\perp}Q^{-1}
\begin{bmatrix}
H_{12}d^\prime_2 \\
H_{22}d^\prime_2
\end{bmatrix} \\
&\overset{\eqref{eqn:Q^-1_proj}}{=}Q^{-1}
\begin{bmatrix}
0 \\
H_{22}d^\prime_2
\end{bmatrix}.
\end{aligned}
\end{displaymath}
It then follows that
\bdm
\norm{\proj_{V^\perp}\sum_{i\in\cC}\lambda_iH_id_2}=\norm{Q^{-1}
\begin{bmatrix}
0 \\
H_{22}d^\prime_2
\end{bmatrix}}
\overset{\eqref{eqn:preserve_norm}}{=}\norm{H_{22}d^\prime_2},
\edm
where we use the fact that $Q$ is an orthogonal matrix. 
The complementary positive-definite condition implies that $H_{22}\succ0$,
and hence $\norm{H_{22}d^\prime_2}\ge2\sigma\norm{d^\prime_2}=2\sigma\norm{d_2}\ge\sigma\norm{d}$ 
for some constant $\sigma>0$, where we use the hypothesis that $d_1\le{O}(\norm{d}^2)$ in the last inequality.
This concludes the proof.
\end{proof}
\begin{theorem}[local linear convergence]\label{thm:linear_rate}
Let $(f,\cM)$ be an encodable locally-max-representable function and $x^*$ be a local minimizer
of $f$.
Suppose the complementary positive-definite condition and the dimension consistency condition
hold at $x^*$.
If the input step size $\alpha$ is sufficiently small, 
the \algoref{alg:global_and_local_converg} achieves a linear rate of convergence
in a small neighborhood of $x^*$.
\end{theorem}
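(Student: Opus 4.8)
The plan is to exhibit a single merit function, the optimality gap $\Phi(x):=f(x)-f(x^*)$, and to show that one outer iteration of EBIGD contracts it by a uniform factor $1-\delta\in(0,1)$ throughout a sufficiently small neighborhood $A$ of $x^*$. First I would fix the complete basis $\cC$ furnished by the dimension consistency condition (\defref{def:sym}), so that $V:=\sp\{\nabla f_\theta(x^*):\theta\in\cC\}=\sp\{\nabla f_\theta(x^*):\theta\in\cM(x^*)\}$; consequently every active gradient lies in $V$, every $\theta\in\cM(x^*)$ has $\proj_{V^\perp}\nabla f_\theta(x^*)=0$, and $V^\perp$ is exactly the subspace along which all active branches are first-order flat. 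By \propref{prop:comp-select-while-loop} the set returned by \textproc{BranchSelection} lies in $\cM(x^*)$ and contains a complete basis once $x\in A$, so I may assume the algorithm works with such a $\cC$ and with this $V$. Writing $d=x-x^*=d_1+d_2$ with $d_1=\proj_V d$, $d_2=\proj_{V^\perp}d$, and $a=\norm{d_1}$, $b=\norm{d_2}$, the next step is the two-sided growth estimate $\Phi(x)=\Theta(a+b^2)$ as $x\to x^*$. The lower bound $\Phi\ge c\,a-O(\norm{d}^2)$ comes from \obsref{obs:bi-polar} (sharpness of $\max_\theta\inner{\nabla f_\theta(x^*)}{d_1}$ along $V$), while $\Phi\ge \tfrac{\mu}{2}b^2$ follows from the complementary positive-definite condition (\defref{def:comp-pos-def}), since moving in $V^\perp$ changes each active branch only through the positive-definite Hessian block $H_{22}$; the matching upper bound is a direct Taylor estimate via \assref{ass:twice-diff}.

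With this estimate in hand I would split on the relative size of the two components. Define Case A as $a\ge C''\norm{d}^2$ and Case B as $a<C''\norm{d}^2$ (i.e.\ $\norm{d_1}\le O(\norm{d}^2)$), for a constant $C''$ fixed below. In Case A I invoke \lemref{lem:gap-red}: taking $\theta$ active at $x$, $f(x)-f(x-\gamma s^*)\ge\gamma\inner{\nabla f_\theta(x^*)}{d}+O(\norm{d}^2)$, and since $\nabla f_\theta(x^*)\in V$ this equals $\gamma\inner{\nabla f_\theta(x^*)}{d_1}+O(\norm{d}^2)$. Because the active branch nearly attains $\max_\theta\inner{\nabla f_\theta(x^*)}{d_1}\ge c\,a$ up to $O(\norm{d}^2)$, the \textproc{GapReduction} decrease is at least $\gamma c\,a-C'\norm{d}^2$; choosing $C''$ so that this exceeds $\tfrac{\gamma c}{2}a$, and noting $b^2\le\norm{d}^2\le a/C''$ forces $a$ to be a constant fraction of $\Phi=\Theta(a+b^2)$, I obtain $f(x)-f(x-\gamma s^*)\ge\delta_A\,\Phi(x)$.

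In Case B the component $d_2$ dominates and $\Phi=\Theta(b^2)=\Theta(\norm{d}^2)$. Here \lemref{lem:vec>sigma*d} gives $\norm{\vc{\QPt{x}{\cC}}}\ge\sigma\norm{d}$ for the joint gradient $d^{\,\mathrm{jg}}$, and the descent inequality $\inner{\nabla f_\theta(x)}{d^{\,\mathrm{jg}}}\ge\norm{d^{\,\mathrm{jg}}}^2$ from \propref{prop:<u,vi>} together with \propref{prop:joint-taylor} yields, for the constant step $\alpha\le 1/L_\Theta$, a \textproc{JointGradientReduction} decrease of at least $\tfrac{\alpha}{2}\norm{d^{\,\mathrm{jg}}}^2\ge\tfrac{\alpha\sigma^2}{2}\norm{d}^2=\Omega(\alpha\,\Phi(x))$. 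The point that requires care is that this step must keep the $V$-component at order $O(\norm{d}^2)$, so that the iterate remains in the Case-B regime and the decomposition does not re-inflate the sharp part: using that $\cC$ is a complete basis (so $\bs 0$ lies in the relative interior of $\conv\{\nabla f_\theta(x^*)\}$ by \obsref{obs:complete-basis}), the minimum-norm QP solution can cancel the $O(\norm{d})$ part of $\proj_V d^{\,\mathrm{jg}}$, leaving $\proj_V d^{\,\mathrm{jg}}=O(\norm{d}^2)$, hence $\norm{\proj_V(x-\alpha d^{\,\mathrm{jg}}-x^*)}=O(\norm{d}^2)$, while $\proj_{V^\perp}$ contracts by the factor $\norm{I-\alpha H_{22}}\le 1-\alpha\mu<1$.

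Finally, since EBIGD performs both \textproc{GapReduction} and \textproc{JointGradientReduction} and retains the larger objective decrease, in either case the next iterate satisfies $\Phi(x^{n+1})\le(1-\delta)\Phi(x^n)$ with $\delta:=\min\{\delta_A,\Omega(\alpha)\}$, giving geometric decay $\Phi(x^n)\le(1-\delta)^{n}\Phi(x^1)$. Combining this with the uniform lower bound $\Phi(x)\ge c''\norm{x-x^*}^2$ valid on $A$ (which follows from the growth estimate above, using $a\ge a^2$ for small $a$), I conclude $\norm{x^n-x^*}\le C(\sqrt{1-\delta})^{\,n}$, i.e.\ a local linear rate. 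I expect the main obstacle to be the Case-B bookkeeping in the third paragraph: rigorously certifying that the joint-gradient step preserves $\norm{\proj_V d}=O(\norm{d}^2)$ (so that a single merit function contracts every iteration rather than the two components merely alternating), which is precisely where the complete-basis geometry and the dimension consistency condition—guaranteeing $V^\perp$ is the genuine smooth subspace—are indispensable.
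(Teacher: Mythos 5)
Your plan is, at bottom, the paper's own proof in light disguise: the same ingredients appear in the same roles (\propref{prop:comp-select-while-loop} to force $\cC\subseteq\cM(x^*)$, \lemref{lem:gap-red} for the sharp regime, \lemref{lem:vec>sigma*d} for the flat regime, with \obsref{obs:bi-polar} and the dimension consistency condition tying the active-branch gradient to $\proj_{V}d$), and your Case A/B split on $\norm{\proj_V d}\gtrless C''\norm{d}^2$ is equivalent to the paper's split on $\inner{g_\theta}{d}\gtrless M\norm{d}^2$ — the paper's Claim~2 is precisely the bridge between the two formulations, and your Case-A estimate $\inner{g_\theta}{d}\ge c\,a-O(\norm{d}^2)$ reproduces its contrapositive. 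Your two-sided growth estimate $f(x)-f^*=\Theta(a+b^2)$ is correct but more than is needed: the paper gets by with the one-sided bound $f(x)-f^*\le\inner{g_\theta}{d}+\tfrac12 L_\Theta\norm{d}^2$ and case-specific lower bounds on the per-step reduction.

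There is, however, one concrete gap. You assert that \propref{prop:comp-select-while-loop} guarantees the set returned by \textproc{BranchSelection} \emph{contains a complete basis}; it guarantees only $\cC\subseteq\cM(x^*)$, and nothing in the algorithm forces a complete basis into $\cC$. The paper devotes its Case~1 to exactly this possibility: if $\cC$ contains no complete basis, \obsref{obs:complete-basis} gives $\norm{\vc{\QPt{x}{\cC}}}>c$ uniformly near $x^*$, and the exit test $\rho\ge\rho_0$ of \textproc{BranchSelection} then certifies a \emph{constant} decrease $\rho_0\alpha c^2$, which trivially dominates linear contraction; without this branch your argument does not cover all iterations. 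Conversely, the obstacle you flag as the main difficulty — certifying that the joint-gradient step keeps $\norm{\proj_V(x^{k+1}-x^*)}=O(\norm{d}^2)$ so the iterate stays in the Case-B regime — is a red herring: the dichotomy is re-evaluated afresh at every iterate, EBIGD keeps the larger of the two candidate reductions, and whichever case $x^{k+1}$ falls into supplies its own constant-factor decrease of $f-f^*$, so no regime invariance (and no control of the next iterate's decomposition) is needed; the paper never tracks it. Two small repairs along the way: in Case~B your Taylor derivation of the decrease tacitly assumes the active branch at $x-\alpha d$ lies in $\cC$, whereas the clean certificate is again the termination test of \textproc{BranchSelection}, which directly yields $f(x)-f(x-\alpha d)\ge\rho_0\alpha\norm{d}^2$; and in Case~A, \lemref{lem:gap-red} requires $\cM(x-\gamma s^*)\cap\cC\neq\emptyset$, which is secured by the augmentation in Step~2 of \textproc{GapReduction} (the paper notes this terminates by finiteness of $\cM(x^*)$ and preserves the complete basis).
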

\begin{proof}
Let $g_i=\nabla{f_i}(x^*)$ for $i\in\cM(x^*)$. We need to prove two claims. 
Claim 1. If $d=x-x^*$ is sufficiently small, then there exists
a constant $M$ such that for any subset $\cC\subseteq\cM(x^*)$ containing 
a complete basis at $x^*$, and for any coefficient vector $\lambda$ with $\sum_{i\in\cC}\lambda_i=1$ 
and $\lambda_i\ge0$ for all $i\in\cC$, the following inequality holds
\beq
\norm{\qpvc{x}{\cC}}\le{M}\norm{d}.
\eeq 
Proof of Claim 1. Suppose $\cB\subseteq\cC$ is a complete basis
and $\lambda^*\in\mR^{|\cB|}$ is a coefficient vector satisfying
$\lambda^*_i>0$, $\sum_{i\in\cB}\lambda^*_i=1$ and
$\sum_{i\in\cB}\lambda^*_ig_i=0$.
Define a coefficient vector $\lambda\in\mR^{|\cC|}$ such that 
$\lambda_i=\lambda^*_i$ for all $i\in\cB$ and $\lambda_i=0$
for all $i\in\cC\setminus\cB$. It follows that
\bdm
\ba
\norm{\qpvc{x}{\cC}}&\le\norm{\sum_{i\in\cC}\lambda_i\nabla f_i(x)}=\norm{\sum_{i\in\cB}\lambda^*_i\nabla f_i(x)} \\
&=\norm{\sum_{i\in\cB}\lambda^*_i(g_i+H_id)} + O(\norm{d}^2) \\
&=\norm{\sum_{i\in\cB}\lambda^*_iH_id} + O(\norm{d}^2) \\
&\le{M}\norm{d},
\ea
\edm
for some constant $M>0$, concluding the proof of the claim.

Claim 2. Suppose $d=x-x^*$ is sufficiently small and $\cC\subseteq\cM(x^*)$ contain a complete basis.  
If $\theta$ is an active branch at $x$ satisfying $\theta\in\cC$ and $\inner{g_\theta}{d}\le{O}(\norm{d}^2)$,
then $|\inner{g_i}{d}|\le{O}(\norm{d}^2)$ for every $i\in\cM(x^*)$. \newline
Proof of Claim 2. Suppose $\inner{g_\theta}{d}\le{a_1}\norm{d}^2$ for some constant $a_1>0$.
Let $\cB$ be any complete basis (not necessarily contained in $\cC$). 
We first need to show that $\norm{\proj_Vd}\le{O}(\norm{d}^2)$,
where $V=\sp\{g_i: i\in\cB\}$.
We prove it by contradiction. Suppose $\norm{\proj_Vd}\ge{a_2}\norm{d}^2$ for some 
constant $a_2>0$ that is large enough but independent of $\norm{d}$. 
Since $\cB$ is a complete basis, by \obsref{obs:bi-polar} there exists
a branch $j\in\cB$ such that $\inner{g_j}{\proj_Vd}\ge{a_3}\norm{\proj_Vd}$
for some constant $a_3>0$ that is independent of $\cB$. It then follows that
\beq\label{eqn:f_j-f_theta}
\ba
&f_j(x^*+d) - f_\theta(x^*+d) \ge \inner{g_j}{d} - \inner{g_\theta}{d} - L_\Theta\norm{d}^2 \\
&=\inner{g_j}{\proj_Vd} - \inner{g_\theta}{d} - L_\Theta\norm{d}^2\ge(a_2a_3-a_1-L_\Theta)\norm{d}^2. 
\ea
\eeq
Since $a_2$ is large enough by the hypothesis, one has $a_2a_3-a_1-L_\Theta>0$. It implies that 
$\theta$ cannot be an active branch at $x$ following the locally-max-representability of $f$ at $x^*$. 
Therefore, we have shown $\norm{\proj_Vd}\le{O}(\norm{d}^2)$ and hence
\bdm
|\inner{g_i}{d}|=|\inner{g_i}{\proj_Vd}| \le{O}(\norm{d}^2)
\edm
for every $i\in\cB$. 
We are ready to prove the claim.
Let $V_0=\sp\{g_i: i\in\cM(x^*)\}$. 
Due to the dimension consistency condition, there exists a basis $\cB^\prime$ at $x^*$ such that
$V_0=\sp\{g_i: i\in\cB^\prime\}$.
Since $\norm{\proj_{V_0}d}\le{O}(\norm{d}^2)$ proved earlier, one has $\norm{\proj_{V^\perp_0}d}\ge\norm{d}/2$.
Again, by \obsref{obs:bi-polar} there exist a $\theta_3\in\cB^\prime$ and 
a constant $a_3>0$ such that $\inner{g_{\theta_3}}{\proj_{V_0}d}\ge{a_3}\norm{\proj_{V_0}d}$ 
for some constant $a_3>0$. Note that
\bdm
f_\theta(x^*+d) - f_{\theta_3}(x^*+d)\ge0,
\edm
which is due to $\theta\in\cM(x)$ and $\theta_3\in\cB^\prime\subseteq\cM(x^*)\subseteq\tM(x)$ as $x$ is very close to $x^*$.
Using Taylor expansion, this inequality implies that
\bdm
\inner{g_\theta}{d} - \inner{g_{\theta_3}}{d}+\frac{1}{2}L_\Theta\norm{d}^2\ge0.
\edm
Since $\inner{g_\theta}{d}\le{O}(\norm{d}^2)$ by the hypothesis of Claim 2, the above inequality
implies $\inner{g_{\theta_3}}{d}\le{O}(\norm{d}^2)$.
Since $\inner{g_{\theta_3}}{d}$ can be decomposed as
\bdm
\ba
\inner{g_{\theta_3}}{d} &= \inner{g_{\theta_3}}{\proj_{V_0}d} + \inner{g_{\theta_3}}{\proj_{V^\perp_0}d} \\
&\ge a_3\norm{\proj_{V_0}d},
\ea
\edm
it follows that $\norm{\proj_{V_0}d}\le{O}(\norm{d}^2)$, and hence
$|\inner{g_i}{d}|\le\norm{g_i}\cdot\norm{\proj_{V_0}d}\le{O}(\norm{d}^2)$ for every $i\in\cM(x^*)$.
This concludes the proof of the claim.

It is ready to prove the theorem. Since the step size $\alpha$
used in \textproc{BranchSelection} is small enough, by \propref{prop:comp-select-while-loop}, 
the set $\cC$ returned by this function satisfies $\cC\subseteq\cM(x^*)$.
Then there are two possible cases: 
Case 1. $\cC$ does not contain a complete basis of $\cM(x^*)$; Case 2. It contains a complete basis of $\cM(x^*)$.
In Case 1, using \obsref{obs:complete-basis}, we conclude that $\norm{\vc{\QPt{x}{\cC}}}>c$ for a constant $c>0$.
Then by the termination criterion of \textproc{BranchSelection}, the reduction of objective value for this iteration
must be greater than $\rho_0\alpha{c^2}$, which is a constant. In this case, the convergence is better than linear rate. 

We now analyze Case 2. First, using Taylor expansion the function value satisfies 
\beq
f(x) - f^*\le\inner{g_\theta}{d} + \frac{1}{2}L_\Theta\norm{d}^2,
\eeq
where $\theta$ is an active branch at the current point $x$ and $d=x-x^*$.
We consider two sub-cases.
 
Case 2.1. $\inner{g_\theta}{d}\ge{M}\norm{d}^2$
for some large enough constant $M>0$. Let $\cC^\prime$ be the set of branches 
after completing Step 2 in \textproc{GapReduction}. 
(Note that Step 2 can be finished in a bounded number of iterations due to the finiteness of $\cM(x^*)$.)
It follows that $\cC\subseteq\cC^\prime$ and $\cC^\prime\cap\cM(x-\gamma{s^*})$ is nonempty.  
Since $\cC$ contains a complete basis due to the hypothesis of Case 2, so does $\cC^\prime$. 
We can then apply \lemref{lem:gap-red} to the set $\cC^\prime$ and conclude that
the function-value reduction obtained from the \textproc{GapReduction} has the following lower bound: 
\bdm
f(x) - f(x-\gamma{s^*})\ge\gamma\inner{g_\theta}{d} - c_0\norm{d}^2,
\edm
for some constant $c_0\ge0$.
Then it follows that
\bdm
\ba
&\frac{f(x) - f(x-\gamma{s^*})}{f(x)-f^*}\ge\frac{\gamma\inner{g_\theta}{d} - c_0\norm{d}^2}{\inner{g_\theta}{d} + L_\Theta\norm{d}^2/2} 
\ge\frac{\gamma\inner{g_\theta}{d}/2}{\inner{g_\theta}{d} + L_\Theta\norm{d}^2/2}\ge\frac{\gamma}{4},
\ea
\edm
where we use the hypothesis that $M$ is large enough.

Case 2.2 $\inner{g_\theta}{d}\le{M}\norm{d}^2$. Using Claim 2, we conclude
that $|\inner{g_i}{d}|\le{O}(\norm{d}^2)$ for every $i\in\cM(x^*)$.
In particular, we have $\norm{\proj_Vd}\le{O}(\norm{d}^2)$ where $V=\sp\{g_i: i\in\cC\}$ by Claim 2. 
Let $d^\prime=\qpvc{x}{\cC}$.
Applying \lemref{lem:vec>sigma*d} gives $\norm{d^\prime}\ge\sigma\norm{d}$
for some constant $\sigma>0$. The function-value reduction obtained from the 
\textproc{JointGradientReduction} has the following lower bound:
\bdm
f(x) - f(x-\alpha{d^\prime}) \ge\rho_0\alpha\norm{d^\prime}^2\ge\rho_0\alpha\sigma^2\norm{d}^2.
\edm
Therefore, the reduction ratio is lower bounded by:
\bdm
\frac{f(x) - f(x-\alpha{d^\prime}) }{f(x)-f^*}\ge\frac{\rho_0\alpha\sigma^2\norm{d}^2}{\inner{g_\theta}{d} + L_\Theta\norm{d}^2/2}
\ge\frac{\rho_0\alpha\sigma^2\norm{d}^2}{M\norm{d}^2 + L_\Theta\norm{d}^2/2}\ge\frac{\rho_0\alpha\sigma^2}{M+L_\Theta/2}.
\edm
We therefore have proved that the reduction of function value in Case 2 is in linear rate.  
Combining Case 1 and Case 2 concludes the proof. 
\end{proof}

\begin{algorithm}
{\scriptsize 
\caption{\scriptsize An enhanced branch-information-driven gradient descent algorithm (EBIGD)}\label{alg:global_and_local_converg}
\begin{algorithmic}[1]
	\State{\textbf{Input}: an initial point $x_{\textrm{init}}$, and parameters $\rho_0,\alpha,\gamma\in(0,1)$.}
	\State{\textbf{Initialization}: Set $k\gets{0}$, $x_k{\gets}x_{\textrm{init}}$.}
	\While{\textbf{true}}\label{lin:outer-while}
		\State{Step 1. Let $[\cC,d]=\Call{BranchSelection}{x_k,\alpha,\rho_0}$.} \label{lin:compute_d}
		\State{Step 2. Let $[y, reduction]=\Call{JointGradientReduction}{x_k,\alpha,d,\rho_0}$.}  \label{lin:call_GapRed}
		\State{Step 3. Let $[y^\prime, reduction^\prime]=\Call{GapReduction}{x_k,\gamma,\cC}$}
		\State{\textbf{If} $reduction\ge{reduction}^\prime$, set $x_{k+1}\gets{y}$; \textbf{else} set $x_{k+1}\gets{y^\prime}$.}
		\State{Let $k\gets{k+1}$.}
	\EndWhile
\end{algorithmic}
\vspace{-3pt}\noindent\makebox[\linewidth]{\rule{\linewidth}{0.4pt}}
\begin{algorithmic}[1]
\Function{BranchSelection}{$x,\alpha,\rho_0$}
	\State{Set $i\gets1$, let $\theta_1$ be an active branch at $x$ and set $\cC_1=\{\theta_1\}$.}
	\While{$\rho<\rho_0$}
		\State{Compute $d_i=\vc{\QPt{x}{\cC_i}}$.}
		\State{\textbf{If} $\norm{d_i}=0$, \textbf{break}.}
		\State{Compute $\rho\gets[f(x) - f(x-\alpha{d}_i)]/(\alpha\norm{d_i}^2)$}
		\State{Let $\theta_{i+1}$ be an active branch at $x-\alpha{d}_i$.} \label{lin:comp_select_i+1}
		\If{$\theta_{i+1}\in\tM(x)$ (given \assref{ass:verify-theta-in-tM})}
			\State{Set $\cC_{i+1}=\cC_i\cup\{\theta_{i+1}\}$ and $i\gets{i+1}$.}
		\EndIf
	\EndWhile
	\State{\textbf{return} [$\cC_i$, $d_i$]} \label{lin:comp_select_final_return}
\EndFunction
\end{algorithmic}
\vspace{-3pt}\noindent\makebox[\linewidth]{\rule{\linewidth}{0.4pt}}
\begin{algorithmic}[1]
\Function{JointGradientReduction}{$x,\alpha,d,\rho_0,$}
	\If{$\norm{d}>0$}
		\State{Set $y=x-\alpha{d}$ and $reduction=f(x)-f(x-\alpha{d})$.}
	\Else
		\State{Set $y=x$ and $reduction=0$.}
	\EndIf
	\State{\textbf{return} $[y, reduction]$.}
\EndFunction
\end{algorithmic}
\vspace{-3pt}\noindent\makebox[\linewidth]{\rule{\linewidth}{0.4pt}}
\begin{algorithmic}[1]
\Function{GapReduction}{$x,\gamma,\cC$}
	\State{Step 1. Solve the following direction-recovery convex program and let $(z^*, s^*)$ be an optimal solution:}
	\bdm
	\ba
	&\min_{z\in\mR,s\in\mR^n}\; \sum_{i\in\cC}(z+\inner{\nabla f_i(x)}{s} - f_i(x))^2 + \norm{s}^4.
	\ea
	\edm 
	\State{Step 2. If $\cM(x-\gamma{s^*})\cap\cC=\emptyset$, then select an arbitrary $\theta\in\cM(x-\gamma{s^*})$,}
	\State{set $\cC\gets\cC\cup\{\theta\}$ and go to Step 1.}
	\State{Step 3.}
	\If{$f(x)-f(x-\gamma{s^*})\ge0$}
		\State{Set $y=x-\gamma{s^*}$ and $reduction=f(x)-f(x-\gamma{s^*})$.}
	\Else
		\State{Set $y=x$ and $reduction=0$.}
	\EndIf
	\State{\textbf{return} $[y, reduction]$.}
\EndFunction
\end{algorithmic}
}
\end{algorithm}

\section{Numerical investigation}\label{sec:num-invest}
A practical implementation of the branch-information-driven gradient descent method is   
is given in \algoref{alg:practical-JGD}, which has been used in the numerical study.
This implementation mostly aligns with the gradient sampling algorithm developed in 
\cite{lewis2005-grad-sampling-nonsmooth-nonconvex-opt}. The difference is on the 
bundle points collection and line search. Specifically, in each iteration, 
the bundle points used in \algoref{alg:practical-JGD} for gradient computation are 
representative points of visited branches within a ball, 
whereas those in \cite{lewis2005-grad-sampling-nonsmooth-nonconvex-opt} are randomly sampled within a ball.
The line search in \algoref{alg:practical-JGD} employs a custom analysis tailored for this work, 
whereas that in \cite{lewis2005-grad-sampling-nonsmooth-nonconvex-opt} 
relies on the standard Armijo condition, which is less effective for piecewise-differentiable objectives.  
We consider 9 standard test problems from \cite{haarala2004-mem-bundle-nonsmooth-opt}
with 6 different dimensions: $n=25, 50, 75, 100, 125, 150, 175, 200$ in our experiments.
Their definition and properties are summarized in \tabref{tab:test_prob}.

We compare the performance of our branch-information-driven gradient descent method  
with other three methods for non-smooth optimization developed in literature. 
The methods and their notations are listed as follows:
\begin{itemize}
	\item[-] BIGD: the branch-information-driven gradient descent method (\algoref{alg:practical-JGD})
	 with parameters $\epsilon_0=0.1$, $\nu_0=1.0\times10^{-3}$, $\gamma=0.5$, 
	 $\epsilonopt=1.0\times10^{-5}$, $\nuopt=1.0\times10^{-4}$, $\theta_\epsilon=0.1$, $\theta_\nu=0.9$
	 and $\rho=1.0\times10^{-2}$
	\item[-] GS: the gradient sampling method  \cite{lewis2005-grad-sampling-nonsmooth-nonconvex-opt}
	\item[-] TRB: a trust-region bundle method \cite{monjezi2023-bundle-trust-region-locally-liptz-func}
	\item[-] QNS: a quasi-Newton method with neighorhood sampling \cite{curtis2015-quasi-newton-nonconvex-nonsmooth}.
\end{itemize}
For the methods GS, TRB and QNS, the parameters are selected following the corresponding references.   
\algoref{alg:practical-JGD} and all algorithms used in the comparison 
are implemented in Julia, gradient calculation is via the automatic differentiation package ForwardDiff \cite{forward-diff},
and the solver for quadratic programs called in each algorithm is OSQP \cite{osqp}. 
The code can be accessed via the following link:
\centerline{\scriptsize \url{https://github.com/luofqfrank/CID-JGD.git}}
for reproduction. All computations were performed on a system equipped with a 2.3 GHz Quad-Core Intel Core i7 CPU and 32GB of memory.
The computational time limit per problem instance is 5 minutes. 
Overall, the experiments results reveal that the BIGD method has the following three advantages compared to other methods:
\begin{itemize}
	\item[-] Reduced number of function and gradient evaluations:
		    This results from the efficient representation of visited branches, particularly compared to sampling-based methods.
	\item[-] Faster quadratic program (QP) solving per iteration (especially for high-dimensional problems): 
		    Each bundle point in the QP uniquely represents a visited branch near the current iterate.
		    The QP size depends only on the number of local branches—not on dimension or sample size.
	\item[-] More effective and informative line-search: This property is implied by \propref{prop:(f-f)/(ad^2)}. If a trial point belongs to a 
		    branch $\theta$ that is included in the set $\cT$ bundle point the search direction computed based on $\cT$ has incorporated the 
		    information of $\nabla{f_\theta}(x^k)$ which can lead to a quality decrease of the objective. If $\theta\notin\cT$ 
		    $\cT$ can be updated to include $\theta$, refining the QP and search direction until the condition $\theta\in\cT$ is satisfied.
\end{itemize}
It is worth to remark that utilization of branch information can be integrated into any existing nonsmooth optimization method that is
based on neighborhood sampling and bundle points to benefit from the above three advantages when applied to encodable piecewise-smooth
functions. The BIGD method is just an example demonstrating the power of using the branch information.   

\tabref{tab:comp-perf-preset} gives the comparison of the four methods
on the computational time and final objective values upon termination. 
The initial points used in the comparison are specified in \cite{haarala2004-mem-bundle-nonsmooth-opt}.
For each problem instance, if the method terminates within 5 mins, the computational time is reported.
Similarly, \tabref{tab:comp-perf-rand} gives the comparison under randomly generated initial points (but consistent used for all methods).
For both cases, the BIGD are not terminated by the time limit in only 2$\sim$3 instances of Chained\_LQ, while there are many more non-terminated
instances for the other three methods. Within terminated instances, the computational time of BIGD is systematically lower than the other three methods. 
When the termination criterion is met, the BIGD method consistently obtains an optimality gap $f-f^*$ at the scale of $10^{-5}\sim10^{-9}$,
which is superior to the GS and TRB methods. For the setting of random initial points (\tabref{tab:comp-perf-rand}), 
while the QNGS obtains better optimality gap than the BIGD for several problem instances, it often takes longer time and more iterations. 
For the setting of recommended initial points (\tabref{tab:comp-perf-preset}), the BIGD is systematically superior to the QNGS
in computational time, number of iterations and optimality gap. In particular, the QNGS does not perform well 
on some higher dimensional problem instances, i.e., $n\ge75$. 
Given that BIGD is a first-order method while QNGS is a second-order method, 
these results further demonstrate the remarkable advantage of incorporating branch information in optimization. 
\figref{fig:obj_dec} is the visualization of the optimality gap versus the computational time corresponding
to the four methods, plotted for 8 selected problem instances of dimension $n=200$. The figure shows that the BIGD
leads to the fastest decrease of objective values in the last 6 problem instances, while the BIGD is on par with the QNGS in
the first 2 problem instances. 

\tabref{tab:time-breakdown} provides a breakdown analysis of average computational time spending on two key steps of 
an iteration: function value and gradient evaluation, and solving of quadratic program, 
compared among the BIGD, GS and QNGS methods. The comparison between the BIGD and GS methods
clearly shows the three advantages of using the branch information: The BIGD spends 1$\sim$2 magnitude 
less computational time on function evaluation, solving the quadratic program and execution of each iteration.
The BIGD also shows systematically advantage over the QNGS on the iteration time, function evaluation time
and QP-solving time with a few exceptions, but the advantage is not as big as when compared with the GS method.
This similarity arises because BIGD and GS share identical procedural structures and both are first-order methods, 
implying that BIGD's advantage stems solely from its incorporation of branch information. 
In contrast, QNGS employs a second-order approach with BFGS updates, 
which inherently provides more sophisticated optimization capabilities than first-order methods. 
Moreover, QNGS selectively performs gradient sampling only when the objective function fails to achieve sufficient decrease. 
Consequently, the benefits of branch information utilization in BIGD are counterbalanced by QNGS's advanced algorithmic framework. 

We can get more insights on the mechanism of the BIGD method by investigating the dynamics of the bundle size $|\cT|$ 
which is the number of effective branches used in the QP \eqref{eqn:imp-alg-QP} to generate a search direction,  
and the number $|\Thetadisc|$ of visited branches in the algorithm. 
\figref{fig:act_branch} shows the plots of $|\cT|$ and $|\Thetadisc|$ versus the iteration index for 7 problem instances 
of dimension $n=200$. In each plot, the blue line and red line respectively represent the number of
effective branches used in the QP at each iteration and number of visited branches by the end of each iteration,
with the scale of the two lines displayed on the left and right axis respectively.
We observed that the number of effective branches fluctuates periodically within a certain range which is
much smaller than and does not increase with the accumulated number of visited branches, indicating that
the number of effective branches within the neighborhood of the current point is relatively stable. 
The periodic pattern of fluctuation reveals a cycle of neighborhood exploration and progressing. 
During the increasing phase of $|\cT|$, the algorithm explored the current neighborhood with small step sizes 
to encounter and collect all the `blocking' branches 
(branches that restrict the direction of making a large enough step) in several iterations. 
At a moment, all blocking branches were collected and included in $\cT$,
an efficient search-direction was identified, and a substantial decrease of objective 
was achieved, which corresponds to the sudden drop of $|\cT|$, concluding the progressing phase of a cycle. 

\section{Conclusion}
This paper demonstrates the effectiveness of functional encoding and branch information 
in designing algorithms for piecewise-smooth optimization. 
Numerical results provide strong evidence that incorporating branch information reduces 
the computational complexity of solving quadratic programs for search direction generation 
while enhancing the efficiency and informativeness of line-search procedures. 
Future research could explore the integration of branch information into existing 
nonsmooth optimization methods to further improve their performance.
Additionally, methodological extensions to different problem settings warrant further investigation. 
For instance, if certain branch functions take a stochastic form, such as $f_\theta(x)=\E_{\xi}[\psi(x,\xi)]$,
the evaluation of function values and gradients becomes stochastic. 
This stochasticity introduces new challenges and opportunities, 
potentially enriching both algorithmic design and convergence analysis.

\section*{Declarations}
\textbf{Data availability} The data used in this work is from an open-source dataset. \newline 
\textbf{Code availability} The code for this work is open access. \newline
\textbf{Conflict of interest} The author has no conflict of interest to declare that are relevant to the content of
this article. \newline
\textbf{Funding} No funding was received for this work. \newline

\begin{algorithm}
{\scriptsize 
\caption{\scriptsize A practical implementation for the branch-information-driven gradient descent method}\label{alg:practical-JGD}
\begin{algorithmic}
	\State{\textbf{Input}: initial point $x^0$, initial exploration radius $\epsilon_0\in(0,\infty)$, 
	initial stationarity target $\nu_0\in(0,\infty)$, 
	linear search parameters $\gamma\in(0,1)$, 
	termination tolerances $(\epsilonopt,\nuopt)\in(0,\infty)\times(0,\infty)$,
	reduction factors $(\theta_\epsilon,\theta_\nu)\in(0,1)\times(0,1)$,
	and line-search threshold $\rho_0$.}
	\State{\textbf{Initialization}: Set $\Thetadisc\gets\{\theta\}$ for an arbitrary $\theta\in\cM(x^0)$ and $z_\theta\gets{x^0}$}
	\For{$k\in\mathbb{N}$}
		\State{Let $\cT=\Thetadisc\cap\mB(x^k,\epsilon_k)$.}
		\State{Let $g^k=\Call{GradientComputation}{\cT}$.}
		\State{\textbf{If} $\|g^k\|\le\nuopt$ and $\epsilon_k\le\epsilonopt$ \textbf{then} terminate.}
		\If{$\|g^k\|\le\nu_k$}
			\State{Set $\nu_{k+1}\gets\theta_\nu\nu_k$, $\epsilon_{k+1}\gets\theta_\epsilon\epsilon_k$.}
		\Else
			\State{Set $\nu_{k+1}\gets\nu_k$ and $\epsilon_{k+1}\gets\epsilon_k$.}
			\State{Let $\alpha=\Call{LineSearch}{x^k,g^k,\gamma,\rho_0}$.}
			\State{Let $x^{k+1}\gets{x^k}-\alpha{g^k}/\|g^k\|$.}
		\EndIf
	\EndFor
\end{algorithmic}
\vspace{-5pt}\noindent\makebox[\linewidth]{\rule{\linewidth}{0.4pt}}
	\begin{algorithmic}[1]
		\Function{GradientComputation}{$\cT$}
					\State{Solve the following quadratic program:}
					\beq\label{eqn:imp-alg-QP}
					\min_\lambda \norm{\sum_{\theta\in\cT}\lambda_\theta\nabla f_\theta(z_\theta)}^2
					\;\textrm{ s.t. } \sum_{\theta\in\cT}\lambda_\theta = 1,\;\lambda_\theta\ge 0\;\forall\theta\in\cT.
					\eeq
					\State{Let $\lambda^*$ be the optimal solution of the above problem, and let 
					$g^*=\sum_{\theta\in\cT}\lambda^*_\theta\nabla f_\theta(z_\theta)$.}
				\State{\textbf{return} $g^*$.}
		 \EndFunction
	\end{algorithmic}
}
\vspace{-5pt}\noindent\makebox[\linewidth]{\rule{\linewidth}{0.4pt}}
	\begin{algorithmic}[1]
		\Function{LineSearch}{$x,g,\gamma,\rho_0$}
			\State{Set $\alpha\gets1$ and $flag\gets 0$. Let $d=g/\norm{g}$.}
					\While{$flag=0$}
						\State{Compute the following ratio: $\rho = [f(x) - f(x-\alpha{d})]/(\alpha\norm{g})$.}
						\If{$\rho\ge\rho_0$}
							\State{\Call{BranchPointUpdate}{$x-\alpha{d}$, $x-\alpha{d}$}.}
							\State{Set $flag\gets 1$.}
						\Else
							\State{\Call{BranchPointUpdate}{$x$, $x-\alpha{d}$}.}
							\State{Set $\alpha\gets\gamma\alpha$.}
						\EndIf
					\EndWhile	
			\State{\textbf{return} $\alpha$.}
		\EndFunction
	\end{algorithmic}
\vspace{-5pt}\noindent\makebox[\linewidth]{\rule{\linewidth}{0.4pt}}
	\begin{algorithmic}[1]
		\Procedure{BranchPointUpdate}{$x$, $\xtr$}
			\For{$\theta\in\cM(\xtr)$}
				\If{there does not exist $\theta$ in $\Thetadisc$}
					\State{Set $\Thetadisc\gets\Thetadisc\cup\{\theta\}$ and $z_\theta\gets\xtr$.}
				\ElsIf{$\norm{\xtr-x}<\norm{z_\theta-x}$}
					\State{Set $z_\theta\gets\xtr$.}
				\EndIf
			\EndFor
		\EndProcedure
	\end{algorithmic}
\end{algorithm}

\begin{table}
\rotatebox{90}{
\begin{minipage}{\textheight}
	\caption{Test problems from \cite{haarala2004-mem-bundle-nonsmooth-opt} and their properties}\label{tab:test_prob}
	{\scriptsize
	\renewcommand{\arraystretch}{3}
	\begin{tabular}{cccc}
	\toprule
	function name & analytical form & optimal objective & convexity  \\
	\midrule
	 gen\_MAXQ & $\max_{1\le{i}\le{n}}\;x^2_i$ & 0.0 & + \\
	gen\_MXHILB & $\max_{1\le{i}\le{n}}\left|\frac{x_i}{i+j-1}\right|$ & 0.0 & +  \\
	Chained\_LQ & $\sum^{n-1}_{i=1}\max\{-x_i-x_{i+1}, -x_i-x_{i+1}+(x^2_i+x^2_{i+1}-1)\}$ & $-\sqrt{2}(n-1)$ & +  \\
	Chained\_CB3\_I & $\sum^{n-1}_{i=1}\{x^4_i+x^2_{i+1}, (2-x_i)^2+(2-x_{i+1})^2, 2e^{-x_i+x_{i+1}} \}$ & $2(n-1)$ & + \\
	Chained\_CB3\_II & $\sum^{n-1}_{i=1}\max\{x^2_i+(x_{i+1}-1)^2+x_{i+1}-1, -x^2_i-(x_{i+1}-1)^2+x_{i+1}+1\}$ & $2(n-1)$ & + \\
	num\_active\_faces & $\max_{1\le{i}\le{n}}\left\{\log\left|\sum^n_{i=1}x_i\right|, \log|x_i|  \right\}$ & 0.0 & -  \\
	brown\_func2 & $\sum^{n-1}_{i=1}\big(|x_i|^{x^2_{i+1}+1} + |x_{i+1}|^{x^2_i+1} \big)$  & 0.0 & - \\
	Chained\_Crescent\_I & $\max\left\{\sum^{n-1}_{i=1}(x^2_i+(x_{i+1}-1)^2 + x_{i+1}-1), \sum^{n-1}_{i=1}(-x^2_i-(x_{i+1}-1)^2 + x_{i+1}+1) \right\}$ & 0.0 & - \\
	Chained\_Crescent\_II & $\sum^{n-1}_{i=1}\max\{x^2_i+(x_{i+1}-1)^2+x_{i+1}-1, -x^2_i-(x_{i+1}-1)^2 + x_{i+1}+1\}$ & 0.0 & - \\
	\bottomrule
	\end{tabular}
	}
\end{minipage}
}
\end{table}

{\newpage
\begin{table}[p]
\floatpagestyle{empty}
\centering
\rotatebox{90}{
\begin{minipage}{\textheight}
\vspace*{-3.25cm}
\caption{Computational performance with initial points specified in \cite{haarala2004-mem-bundle-nonsmooth-opt} }
\label{tab:comp-perf-preset}
\begin{tabular}{cr|rrc|rrc|rrc|rrc}
\hline\hline
& & \multicolumn{3}{c|}{BIGD} & \multicolumn{3}{c|}{GS} & \multicolumn{3}{c|}{TRB} & \multicolumn{3}{c}{QNGS} \\
\hline
problem	&	dim	&	time(s)	&	iter	&	obj	&	time(s)	&	iter	&	obj	&	time(s) &	iter	&	obj	&	time(s)	&	iter	&	obj	\\
\hline
gen\_MAXQ	&	25	&	1.33	&	307	&	2.74e-9	&	1.00	&	145	&	6.06e-9	&	107.00	&	60	&	8.88e-1	&	4.50	&	265	&	2.44e-9	\\
	&	50	&	2.89	&	670	&	4.43e-9	&	17.22	&	2240	&	7.98e-9	&	80.85	&	425	&	1.17e-6	&	43.17	&	4571	&	2.27e-9	\\
	&	75	&	4.77	&	1046	&	4.93e-9	&	-	&	30596	&	1.00e+0	&	-	&	68	&	6.73e-1	&	-	&	29186	&	1.00e+0	\\
	&	100	&	7.03	&	1511	&	4.62e-9	&	-	&	23176	&	1.00e+0	&	294.61	&	247	&	2.75e-1	&	-	&	19238	&	1.00e+0	\\
	&	150	&	16.65	&	2786	&	6.57e-9	&	-	&	14134	&	1.00e+0	&	0.00	&	1	&	1.00e+0	&	-	&	18941	&	1.00e+0	\\
	&	200	&	24.74	&	4384	&	1.49e-8	&	-	&	10020	&	1.00e+0	&	0.00	&	1	&	1.00e+0	&	-	&	18712	&	1.00e+0	\\
\hline
gen\_MXHILB	&	25	&	0.46	&	110	&	3.38e-6	&	1.45	&	104	&	1.96e-6	&	22.18	&	592	&	1.23e-4	&	-	&	12380	&	9.19e-10	\\
	&	50	&	0.84	&	176	&	2.45e-6	&	14.57	&	175	&	1.17e-6	&	57.71	&	960	&	8.05e-4	&	3.56	&	532	&	2.25e-8	\\
	&	75	&	0.90	&	172	&	1.99e-6	&	70.46	&	230	&	3.15e-6	&	59.37	&	920	&	8.49e-4	&	-	&	4839	&	4.49e-8	\\
	&	100	&	1.76	&	273	&	1.15e-6	&	135.50	&	205	&	7.44e-6	&	222.16	&	1902	&	2.86e-4	&	-	&	2427	&	1.44e-8	\\
	&	150	&	2.09	&	265	&	2.59e-6	&	-	&	120	&	8.76e-3	&	-	&	2002	&	1.20e-3	&	-	&	2575	&	1.82e-8	\\
	&	200	&	4.17	&	421	&	4.30e-6	&	-	&	56	&	1.67e-1	&	-	&	1841	&	3.33e-3	&	-	&	2724	&	2.93e-8	\\
\hline
Chained\_LQ	&	25	&	0.98	&	116	&	2.35e-6	&	11.01	&	752	&	2.13e-6	&	-	&	7117	&	7.88e-1	&	5.08	&	352	&	4.17e-7	\\
	&	50	&	5.67	&	234	&	3.70e-6	&	163.84	&	2090	&	3.18e-6	&	-	&	3289	&	2.78e-1	&	-	&	7878	&	1.64e-5	\\
	&	75	&	25.36	&	341	&	8.89e-6	&	-	&	1143	&	2.32e-1	&	52.13	&	572	&	7.63e-1	&	-	&	6039	&	1.07e-4	\\
	&	100	&	98.87	&	476	&	4.88e-6	&	-	&	499	&	8.76e-1	&	14.97	&	203	&	6.04e-1	&	-	&	2954	&	3.66e-2	\\
	&	150	&	-	&	104	&	7.51e-2	&	-	&	127	&	1.04e+0	&	13.50	&	193	&	2.77e-1	&	-	&	2062	&	2.18e-1	\\
	&	200	&	-	&	322	&	1.93e-3	&	-	&	58	&	1.19e+0	&	13.13	&	212	&	1.91e-1	&	-	&	1494	&	5.92e-2	\\
\hline
Chained\_CB3\_I	&	25	&	1.25	&	109	&	2.81e-5	&	16.73	&	1167	&	1.89e-5	&	-	&	3191	&	1.36e+0	&	1.19	&	211	&	1.99e-5	\\
	&	50	&	7.00	&	185	&	4.00e-5	&	268.99	&	3413	&	2.37e-5	&	30.01	&	272	&	1.24e+0	&	164.16	&	4562	&	4.20e-7	\\
	&	75	&	8.45	&	177	&	7.09e-5	&	-	&	1134	&	9.27e-1	&	6.29	&	131	&	6.07e-1	&	-	&	4902	&	1.70e-5	\\
	&	100	&	10.36	&	171	&	5.95e-5	&	-	&	508	&	1.76e+0	&	15.29	&	139	&	1.56e+0	&	-	&	3220	&	2.21e-5	\\
	&	150	&	22.39	&	195	&	7.87e-5	&	-	&	111	&	1.90e+0	&	30.51	&	156	&	1.22e+0	&	-	&	1678	&	1.58e+1	\\
	&	200	&	274.03	&	375	&	7.46e-5	&	-	&	52	&	2.40e+0	&	10.46	&	171	&	1.09e+0	&	-	&	1623	&	1.24e+1	\\
\hline
Chained\_CB3\_II	&	25	&	0.40	&	65	&	1.34e-7	&	2.14	&	85	&	7.11e-6	&	2.40	&	71	&	2.10e-7	&	-	&	5030	&	3.17e-4	\\
	&	50	&	0.86	&	116	&	1.86e-7	&	54.11	&	311	&	1.44e-6	&	3.23	&	98	&	3.25e-7	&	-	&	3741	&	1.34e-4	\\
	&	75	&	0.67	&	90	&	2.21e-7	&	219.43	&	461	&	9.93e-6	&	4.28	&	111	&	4.64e-7	&	-	&	3556	&	9.80e-5	\\
	&	100	&	0.98	&	110	&	3.90e-7	&	274.83	&	291	&	2.49e-5	&	7.09	&	122	&	8.23e-7	&	-	&	4160	&	3.54e-4	\\
	&	150	&	0.36	&	59	&	3.38e-6	&	-	&	109	&	1.53e-4	&	10.30	&	149	&	7.00e-7	&	-	&	5594	&	2.69e-4	\\
	&	200	&	0.48	&	60	&	4.75e-7	&	-	&	47	&	4.88e-3	&	8.75	&	168	&	4.50e-7	&	-	&	1784	&	2.29e-5	\\
\hline
num\_active\_faces	&	25	&	0.06	&	13	&	1.12e-7	&	0.18	&	9	&	4.88e-15	&	0.64	&	56	&	2.43e-6	&	-	&	6787	&	5.05e-5	\\
	&	50	&	0.12	&	22	&	1.18e-7	&	2.81	&	24	&	1.32e-5	&	1.39	&	103	&	1.68e-7	&	-	&	2167	&	8.62e-5	\\
	&	75	&	0.13	&	26	&	4.38e-6	&	9.83	&	23	&	1.21e-5	&	1.70	&	116	&	2.07e-6	&	-	&	79	&	4.47e-3	\\
	&	100	&	0.09	&	18	&	1.49e-7	&	9.96	&	14	&	1.38e-14	&	3.36	&	160	&	1.58e-6	&	-	&	1631	&	1.31e-4	\\
	&	150	&	0.14	&	24	&	9.42e-6	&	70.20	&	24	&	9.14e-6	&	6.13	&	215	&	4.09e-6	&	-	&	1702	&	1.83e-4	\\
	&	200	&	0.16	&	30	&	1.60e-7	&	172.81	&	26	&	5.11e-8	&	6.75	&	305	&	3.28e-6	&	11.57	&	177	&	1.94e-5	\\
\hline
brown\_func2	&	25	&	0.25	&	44	&	2.43e-5	&	3.06	&	203	&	4.74e-6	&	1.73	&	83	&	7.14e-7	&	-	&	11963	&	4.38e-4	\\
	&	50	&	0.33	&	56	&	5.63e-6	&	102.17	&	1256	&	4.30e-6	&	3.17	&	122	&	9.75e-7	&	-	&	6802	&	9.25e-4	\\
	&	75	&	0.33	&	56	&	2.64e-5	&	-	&	1094	&	5.55e-1	&	2.89	&	116	&	7.35e-7	&	-	&	5624	&	4.28e-3	\\
	&	100	&	0.49	&	71	&	4.95e-5	&	-	&	502	&	8.36e-1	&	4.97	&	132	&	2.35e-7	&	-	&	4404	&	9.39e-3	\\
	&	150	&	0.59	&	67	&	1.43e-5	&	-	&	123	&	9.42e-1	&	6.94	&	149	&	4.03e-7	&	-	&	3269	&	2.14e-2	\\
	&	200	&	0.74	&	69	&	2.29e-5	&	-	&	52	&	9.67e-1	&	6.87	&	177	&	4.39e-7	&	-	&	2449	&	3.37e-2	\\
\hline
Chained\_Crescent\_I	&	25	&	0.21	&	47	&	5.49e-7	&	1.39	&	64	&	9.77e-6	&	1.98	&	104	&	9.56e-7	&	-	&	2426	&	7.50e-4	\\
	&	50	&	0.25	&	53	&	1.57e-8	&	12.85	&	90	&	5.47e-6	&	4.88	&	181	&	9.29e-7	&	276.66	&	26006	&	1.22e-5	\\
	&	75	&	0.29	&	55	&	1.06e-7	&	31.69	&	73	&	1.21e-5	&	4.83	&	169	&	1.40e-6	&	-	&	1906	&	2.40e-2	\\
	&	100	&	0.29	&	54	&	7.82e-8	&	96.55	&	106	&	3.90e-6	&	12.15	&	248	&	9.59e-7	&	-	&	1718	&	1.66e-3	\\
	&	150	&	0.35	&	67	&	1.04e-6	&	-	&	94	&	1.04e-2	&	14.34	&	235	&	1.14e-6	&	-	&	1544	&	2.79e-2	\\
	&	200	&	0.52	&	91	&	1.28e-7	&	-	&	45	&	1.16e+1	&	13.14	&	269	&	4.54e-7	&	-	&	1238	&	2.30e-3	\\
\hline
Chained\_Crescent\_II	&	25	&	0.24	&	52	&	1.28e-6	&	8.03	&	459	&	5.19e-6	&	-	&	10221	&	6.22e+0	&	-	&	11222	&	1.34e-3	\\
	&	50	&	0.56	&	96	&	1.12e-5	&	262.32	&	2985	&	4.06e-6	&	-	&	8130	&	1.86e+1	&	-	&	6872	&	1.23e-1	\\
	&	75	&	0.47	&	72	&	6.04e-6	&	-	&	946	&	1.24e-1	&	-	&	6285	&	2.70e+1	&	-	&	4758	&	3.59e-1	\\
	&	100	&	0.65	&	105	&	1.92e-5	&	-	&	478	&	1.45e-1	&	-	&	4478	&	3.79e+1	&	-	&	3835	&	3.59e-1	\\
	&	150	&	0.77	&	112	&	5.94e-6	&	-	&	126	&	2.91e-1	&	-	&	3085	&	5.47e+1	&	-	&	2703	&	3.59e-1	\\
	&	200	&	0.80	&	90	&	8.67e-6	&	-	&	48	&	2.90e-1	&	-	&	5526	&	7.81e+1	&	-	&	2161	&	3.59e-1	\\
\hline
\end{tabular}
\end{minipage}
}
\end{table}
}

{\newpage
\begin{table}[p]
\floatpagestyle{empty}
\centering
\rotatebox{90}{
\begin{minipage}{\textheight}
\vspace*{-3.75cm}
\caption{Computational performance with random initial points}
\label{tab:comp-perf-rand}
\begin{tabular}{cr|rrc|rrc|rrc|rrc}
\hline\hline
& & \multicolumn{3}{c|}{BIGD} & \multicolumn{3}{c|}{GS} & \multicolumn{3}{c|}{TRB} & \multicolumn{3}{c}{QNGS} \\
\hline
problem	&	dim	&	time(s)	&	iter	&	obj	&	time(s)	&	iter	&	obj	&	time(s) &	iter	&	obj	&	time(s)	&	iter	&	obj	\\
\hline
gen\_MAXQ	&	25	&	0.79	&	221	&	2.24e-9	&	1.96	&	156	&	5.84e-9	&	16.90	&	396	&	7.75e-1	&	1.87	&	241	&	3.77e-9	\\
	&	50	&	1.41	&	385	&	3.33e-9	&	22.21	&	255	&	5.26e-9	&	64.62	&	1462	&	7.54e-1	&	2.03	&	423	&	3.75e-9	\\
	&	75	&	2.21	&	568	&	7.18e-9	&	53.94	&	329	&	7.09e-9	&	-	&	1884	&	8.63e-1	&	3.31	&	566	&	2.46e-9	\\
	&	100	&	3.53	&	913	&	5.18e-9	&	159.69	&	446	&	1.10e-8	&	26.06	&	181	&	8.64e-1	&	5.60	&	721	&	2.44e-9	\\
	&	150	&	5.34	&	1311	&	7.82e-9	&	-	&	255	&	8.28e-3	&	14.85	&	94	&	8.69e-1	&	9.31	&	1057	&	2.48e-9	\\
	&	200	&	15.29	&	3540	&	1.29e-8	&	-	&	145	&	1.52e-1	&	176.33	&	737	&	8.00e-1	&	11.46	&	1369	&	2.47e-9	\\
\hline
gen\_MXHILB	&	25	&	0.75	&	180	&	4.56e-6	&	3.67	&	142	&	3.81e-5	&	26.30	&	157	&	2.51e-4	&	-	&	10511	&	1.01e-6	\\
	&	50	&	0.52	&	110	&	1.36e-5	&	14.29	&	99	&	1.44e-5	&	40.87	&	269	&	2.84e-4	&	-	&	8529	&	4.11e-7	\\
	&	75	&	1.19	&	227	&	1.43e-5	&	49.95	&	160	&	1.36e-5	&	24.06	&	167	&	2.24e-4	&	-	&	6757	&	1.17e-6	\\
	&	100	&	1.25	&	250	&	2.04e-5	&	127.85	&	190	&	2.38e-5	&	55.34	&	252	&	1.55e-4	&	-	&	3276	&	1.66e-6	\\
	&	150	&	0.96	&	167	&	4.79e-5	&	-	&	116	&	1.39e-4	&	-	&	490	&	8.19e-4	&	-	&	5589	&	2.91e-7	\\
	&	200	&	2.22	&	327	&	6.02e-6	&	-	&	52	&	9.40e-4	&	-	&	608	&	7.26e-4	&	-	&	2765	&	3.41e-6	\\
\hline
Chained\_LQ	&	25	&	2.19	&	205	&	4.00e-6	&	7.02	&	452	&	1.93e-6	&	-	&	3436	&	1.40e+0	&	-	&	15253	&	1.71e-13	\\
	&	50	&	27.04	&	417	&	5.27e-6	&	111.67	&	1218	&	2.42e-6	&	-	&	3661	&	2.48e+0	&	-	&	7584	&	1.58e-12	\\
	&	75	&	205.40	&	642	&	4.77e-6	&	-	&	1101	&	4.26e-2	&	-	&	4076	&	3.45e+0	&	-	&	5843	&	1.02e-12	\\
	&	100	&	-	&	497	&	1.72e-4	&	-	&	524	&	1.15e-1	&	-	&	2867	&	4.20e+0	&	-	&	2952	&	1.07e-4	\\
	&	150	&	-	&	291	&	1.23e-2	&	-	&	125	&	4.64e-1	&	-	&	2453	&	6.14e+0	&	-	&	2455	&	1.58e-3	\\
	&	200	&	-	&	263	&	2.09e-2	&	-	&	48	&	5.82e-1	&	-	&	2423	&	7.41e+0	&	-	&	2219	&	9.88e-4	\\
\hline
Chained\_CB3\_I	&	25	&	2.13	&	136	&	3.74e-5	&	8.92	&	566	&	2.11e-5	&	-	&	3727	&	3.50e+0	&	2.06	&	304	&	9.15e-6	\\
	&	50	&	10.76	&	230	&	3.55e-5	&	179.31	&	2312	&	2.10e-5	&	-	&	3506	&	6.21e+0	&	105.90	&	3118	&	6.47e-6	\\
	&	75	&	18.70	&	269	&	5.57e-5	&	-	&	1168	&	5.31e-1	&	-	&	3706	&	1.36e+1	&	-	&	5080	&	1.18e-3	\\
	&	100	&	38.26	&	298	&	9.51e-5	&	-	&	540	&	1.15e+0	&	-	&	2704	&	1.60e+1	&	-	&	2872	&	2.42e-3	\\
	&	150	&	124.69	&	437	&	7.34e-5	&	-	&	126	&	1.94e+0	&	-	&	2139	&	2.47e+1	&	-	&	1779	&	1.00e-1	\\
	&	200	&	269.20	&	422	&	1.17e-4	&	-	&	55	&	1.50e+0	&	-	&	2477	&	2.98e+1	&	-	&	1592	&	1.11e-1	\\
\hline
Chained\_CB3\_II	&	25	&	0.28	&	80	&	1.33e-7	&	1.81	&	71	&	9.78e-6	&	-	&	3480	&	2.20e+0	&	0.57	&	93	&	6.70e-7	\\
	&	50	&	0.26	&	65	&	1.21e-7	&	19.06	&	111	&	4.61e-6	&	7.40	&	112	&	6.49e-7	&	-	&	5440	&	1.54e-9	\\
	&	75	&	0.36	&	80	&	1.78e-7	&	36.98	&	79	&	7.08e-6	&	9.25	&	135	&	2.87e-7	&	2.28	&	93	&	4.74e-10	\\
	&	100	&	0.35	&	89	&	1.45e-7	&	83.32	&	89	&	8.59e-7	&	12.71	&	146	&	5.36e-7	&	1.78	&	109	&	2.28e-10	\\
	&	150	&	0.44	&	91	&	3.75e-7	&	-	&	94	&	1.51e-6	&	19.93	&	188	&	5.03e-7	&	-	&	1599	&	1.02e-9	\\
	&	200	&	0.48	&	102	&	2.07e-7	&	-	&	46	&	4.79e-1	&	19.23	&	204	&	4.83e-7	&	3.83	&	113	&	9.66e-10	\\
\hline
num\_active\_faces	&	25	&	1.03	&	263	&	5.54e-6	&	3.53	&	178	&	6.12e-6	&	2.21	&	65	&	5.63e-1	&	256.40	&	68421	&	5.59e-1	\\
	&	50	&	2.46	&	550	&	5.25e-6	&	25.60	&	282	&	1.44e-5	&	2.24	&	50	&	5.70e-1	&	-	&	43564	&	3.05e-3	\\
	&	75	&	5.13	&	980	&	8.36e-6	&	91.99	&	421	&	1.17e-5	&	2.92	&	58	&	6.14e-1	&	-	&	54910	&	1.16e-3	\\
	&	100	&	8.95	&	1497	&	6.89e-6	&	220.15	&	553	&	1.41e-5	&	2.44	&	39	&	6.29e-1	&	10.95	&	1084	&	6.33e-7	\\
	&	150	&	27.18	&	3013	&	7.43e-6	&	-	&	268	&	1.54e-1	&	3.20	&	46	&	6.51e-1	&	18.60	&	1751	&	2.33e-7	\\
	&	200	&	95.65	&	6112	&	8.56e-6	&	-	&	141	&	4.17e-1	&	200.63	&	976	&	7.11e-7	&	-	&	35412	&	5.69e-2	\\
\hline
brown\_func2	&	25	&	0.17	&	54	&	3.27e-5	&	4.83	&	350	&	7.37e-6	&	4.90	&	102	&	7.20e-7	&	2.85	&	222	&	1.09e-5	\\
	&	50	&	0.39	&	75	&	4.26e-5	&	47.34	&	601	&	1.72e-6	&	-	&	5344	&	6.31e+0	&	11.26	&	652	&	1.39e-6	\\
	&	75	&	3.27	&	138	&	1.97e-5	&	-	&	1102	&	8.55e-2	&	-	&	2512	&	1.08e+1	&	-	&	5729	&	4.55e-10	\\
	&	100	&	5.91	&	137	&	1.45e-5	&	-	&	518	&	3.15e-1	&	-	&	1907	&	1.37e+1	&	-	&	4286	&	2.30e-9	\\
	&	150	&	96.21	&	343	&	3.07e-5	&	-	&	124	&	3.76e-1	&	-	&	1304	&	1.92e+1	&	-	&	2174	&	7.17e-4	\\
	&	200	&	9.93	&	135	&	6.05e-5	&	-	&	54	&	3.82e-1	&	-	&	920	&	4.07e-1	&	-	&	1006	&	2.32e-2	\\
\hline
Chained\_Crescent\_I	&	25	&	0.11	&	38	&	1.85e-7	&	1.05	&	57	&	7.93e-6	&	4.61	&	47	&	1.34e-6	&	-	&	1631	&	9.97e-5	\\
	&	50	&	0.17	&	50	&	9.75e-9	&	7.97	&	64	&	1.07e-5	&	5.23	&	56	&	1.40e-6	&	0.68	&	69	&	3.86e-9	\\
	&	75	&	0.13	&	41	&	1.53e-7	&	24.85	&	65	&	1.06e-5	&	9.28	&	94	&	1.12e-6	&	1.29	&	93	&	2.91e-9	\\
	&	100	&	0.18	&	53	&	2.44e-10	&	50.22	&	60	&	1.33e-6	&	8.12	&	71	&	4.11e-7	&	-	&	1092	&	3.41e-4	\\
	&	150	&	0.18	&	49	&	2.89e-8	&	226.13	&	75	&	1.02e-5	&	16.67	&	85	&	1.14e-6	&	2.67	&	88	&	2.31e-7	\\
	&	200	&	0.16	&	43	&	6.20e-7	&	-	&	46	&	7.53e-4	&	12.80	&	100	&	8.22e-7	&	-	&	1244	&	1.56e-11	\\
\hline
Chained\_Crescent\_II	&	25	&	0.26	&	75	&	1.06e-5	&	5.76	&	359	&	3.75e-6	&	-	&	2730	&	7.06e+0	&	-	&	13792	&	2.11e-15	\\
	&	50	&	0.49	&	92	&	1.16e-5	&	136.85	&	1599	&	3.86e-6	&	-	&	2320	&	1.20e+1	&	-	&	8668	&	8.23e-14	\\
	&	75	&	0.72	&	133	&	6.12e-6	&	-	&	1147	&	2.17e-2	&	-	&	2191	&	1.28e+1	&	-	&	5251	&	3.15e-3	\\
	&	100	&	0.45	&	101	&	3.32e-6	&	-	&	522	&	1.03e+0	&	-	&	1714	&	1.52e+1	&	-	&	2939	&	3.07e-3	\\
	&	150	&	99.89	&	347	&	3.50e-5	&	-	&	121	&	2.92e-1	&	-	&	1765	&	3.13e+1	&	-	&	2056	&	2.31e-2	\\
	&	200	&	11.83	&	215	&	1.88e-5	&	-	&	46	&	1.32e+0	&	-	&	1656	&	4.49e+1	&	-	&	1399	&	8.91e-2	\\
\hline
\end{tabular}
\end{minipage}
}
\end{table}
}

{\newpage
\begin{table}[p]
\floatpagestyle{empty}
\centering
\rotatebox{90}{
\begin{minipage}{\textheight}
\vspace*{-3.25cm}
\caption{Computational time per iteration}
\label{tab:time-breakdown}
\begin{tabular}{cr|ccc|ccc|ccc}
\hline\hline
& & \multicolumn{3}{c|}{BIGD} & \multicolumn{3}{c|}{GS}  & \multicolumn{3}{c}{QNGS} \\
\hline
problem	&	dim	&	avg. iter. time	&	avg. eval. time	&	avg. QP. time	&	avg. iter. time	&	avg. eval. time	&	avg. QP. time	&    avg. iter. time	&  avg. eval. time	&  avg. QP. time	\\
\hline
gen\_MAXQ	&	25	&	3.58e-3	&	2.01e-5	&	3.36e-3	&	1.27e-2	&	4.80e-5	&	1.25e-2	&	7.79e-3	&	2.30e-3	&	4.97e-3	\\
	&	50	&	3.68e-3	&	2.12e-5	&	3.53e-3	&	8.74e-2	&	1.66e-4	&	8.72e-2	&	4.81e-3	&	1.41e-4	&	4.54e-3	\\
	&	75	&	3.90e-3	&	2.76e-5	&	3.70e-3	&	1.64e-1	&	2.49e-4	&	1.64e-1	&	5.86e-3	&	3.22e-4	&	5.36e-3	\\
	&	100	&	3.87e-3	&	3.88e-5	&	3.62e-3	&	3.59e-1	&	4.12e-4	&	3.58e-1	&	7.78e-3	&	8.32e-4	&	6.05e-3	\\
	&	150	&	4.08e-3	&	4.67e-5	&	3.70e-3	&	1.26e+0	&	1.73e-3	&	1.25e+0	&	8.82e-3	&	1.18e-3	&	7.21e-3	\\
	&	200	&	4.32e-3	&	6.64e-5	&	3.82e-3	&	2.22e+0	&	1.78e-3	&	2.22e+0	&	8.38e-3	&	1.72e-3	&	6.28e-3	\\
\hline
gen\_MXHILB	&	25	&	4.18e-3	&	3.20e-5	&	3.99e-3	&	2.61e-2	&	7.48e-5	&	2.58e-2	&	3.03e-2	&	1.35e-3	&	2.87e-2	\\
	&	50	&	4.78e-3	&	4.20e-5	&	4.50e-3	&	1.46e-1	&	4.92e-4	&	1.45e-1	&	3.73e-2	&	1.56e-3	&	3.53e-2	\\
	&	75	&	5.26e-3	&	6.49e-5	&	4.70e-3	&	3.14e-1	&	2.05e-4	&	3.14e-1	&	4.72e-2	&	1.78e-3	&	4.45e-2	\\
	&	100	&	5.03e-3	&	7.62e-5	&	4.36e-3	&	6.76e-1	&	2.83e-4	&	6.76e-1	&	9.75e-2	&	4.20e-3	&	9.17e-2	\\
	&	150	&	5.76e-3	&	1.42e-4	&	4.61e-3	&	2.76e+0	&	5.03e-4	&	2.76e+0	&	5.72e-2	&	2.83e-3	&	5.17e-2	\\
	&	200	&	6.80e-3	&	2.27e-4	&	4.92e-3	&	6.22e+0	&	9.37e-4	&	6.22e+0	&	1.16e-1	&	6.49e-3	&	1.05e-1	\\
\hline
Chained\_LQ	&	25	&	1.08e-2	&	1.81e-4	&	1.00e-2	&	1.56e-2	&	4.99e-5	&	1.55e-2	&	2.08e-2	&	7.48e-4	&	1.99e-2	\\
	&	50	&	6.50e-2	&	8.13e-4	&	6.28e-2	&	9.18e-2	&	6.84e-5	&	9.16e-2	&	4.20e-2	&	1.22e-3	&	4.05e-2	\\
	&	75	&	3.20e-1	&	1.81e-3	&	3.16e-1	&	2.90e-1	&	1.12e-4	&	2.90e-1	&	5.46e-2	&	1.43e-3	&	5.28e-2	\\
	&	100	&	6.45e-1	&	3.12e-3	&	6.39e-1	&	6.10e-1	&	3.09e-4	&	6.10e-1	&	1.08e-1	&	2.62e-3	&	1.05e-1	\\
	&	150	&	1.10e+0	&	5.04e-3	&	1.10e+0	&	2.55e+0	&	6.64e-4	&	2.55e+0	&	1.30e-1	&	3.12e-3	&	1.27e-1	\\
	&	200	&	1.23e+0	&	6.06e-3	&	1.22e+0	&	6.66e+0	&	9.87e-4	&	6.66e+0	&	1.44e-1	&	3.72e-3	&	1.40e-1	\\
\hline
Chained\_CB3\_I	&	25	&	1.58e-2	&	2.82e-4	&	1.37e-2	&	1.58e-2	&	4.66e-5	&	1.56e-2	&	6.81e-3	&	2.53e-4	&	6.15e-3	\\
	&	50	&	4.70e-2	&	8.05e-4	&	4.45e-2	&	7.76e-2	&	7.37e-5	&	7.74e-2	&	3.40e-2	&	1.71e-3	&	3.16e-2	\\
	&	75	&	6.98e-2	&	1.13e-3	&	6.71e-2	&	2.73e-1	&	1.38e-4	&	2.73e-1	&	6.28e-2	&	3.51e-3	&	5.83e-2	\\
	&	100	&	1.29e-1	&	1.54e-3	&	1.26e-1	&	5.91e-1	&	3.03e-4	&	5.90e-1	&	1.11e-1	&	6.02e-3	&	1.03e-1	\\
	&	150	&	2.86e-1	&	3.04e-3	&	2.80e-1	&	2.53e+0	&	1.36e-3	&	2.53e+0	&	1.80e-1	&	9.81e-3	&	1.67e-1	\\
	&	200	&	6.39e-1	&	5.86e-3	&	6.29e-1	&	5.86e+0	&	9.43e-4	&	5.86e+0	&	2.01e-1	&	1.08e-2	&	1.86e-1	\\
\hline
Chained\_CB3\_II	&	25	&	3.53e-3	&	2.54e-5	&	3.39e-3	&	2.59e-2	&	6.43e-5	&	2.58e-2	&	6.20e-3	&	1.43e-4	&	5.96e-3	\\
	&	50	&	4.11e-3	&	3.78e-5	&	3.93e-3	&	1.73e-1	&	1.01e-4	&	1.73e-1	&	5.87e-2	&	1.30e-3	&	5.72e-2	\\
	&	75	&	4.59e-3	&	5.02e-5	&	4.35e-3	&	4.74e-1	&	1.18e-4	&	4.74e-1	&	2.48e-2	&	1.02e-2	&	1.44e-2	\\
	&	100	&	3.99e-3	&	6.38e-5	&	3.73e-3	&	9.47e-1	&	3.28e-4	&	9.46e-1	&	1.65e-2	&	7.85e-4	&	1.54e-2	\\
	&	150	&	4.89e-3	&	1.04e-4	&	4.23e-3	&	3.26e+0	&	7.10e-4	&	3.26e+0	&	2.00e-1	&	4.99e-3	&	1.94e-1	\\
	&	200	&	4.73e-3	&	1.25e-4	&	4.05e-3	&	6.98e+0	&	1.04e-3	&	6.98e+0	&	3.42e-2	&	2.69e-3	&	3.11e-2	\\
\hline
num\_active\_faces	&	25	&	3.93e-3	&	5.07e-7	&	3.49e-3	&	1.99e-2	&	2.14e-4	&	1.97e-2	&	3.75e-3	&	3.19e-5	&	3.42e-3	\\
	&	50	&	4.49e-3	&	5.56e-7	&	3.61e-3	&	9.11e-2	&	1.19e-4	&	9.09e-2	&	7.20e-3	&	7.90e-4	&	6.16e-3	\\
	&	75	&	5.24e-3	&	5.73e-7	&	3.65e-3	&	2.19e-1	&	1.59e-4	&	2.19e-1	&	5.57e-3	&	7.32e-5	&	4.74e-3	\\
	&	100	&	5.98e-3	&	6.50e-7	&	3.66e-3	&	3.99e-1	&	2.59e-4	&	3.98e-1	&	1.01e-2	&	1.02e-3	&	8.09e-3	\\
	&	150	&	9.02e-3	&	9.14e-7	&	3.76e-3	&	1.18e+0	&	5.21e-4	&	1.18e+0	&	1.06e-2	&	1.54e-3	&	8.36e-3	\\
	&	200	&	1.57e-2	&	1.40e-6	&	4.20e-3	&	2.24e+0	&	7.97e-4	&	2.24e+0	&	8.86e-3	&	2.73e-4	&	7.98e-3	\\
\hline
brown\_func2	&	25	&	3.27e-3	&	2.38e-7	&	3.04e-3	&	1.38e-2	&	3.45e-5	&	1.37e-2	&	1.29e-2	&	7.54e-4	&	1.19e-2	\\
	&	50	&	5.21e-3	&	5.51e-7	&	4.70e-3	&	7.89e-2	&	8.49e-5	&	7.88e-2	&	1.73e-2	&	1.57e-3	&	1.52e-2	\\
	&	75	&	2.39e-2	&	1.45e-6	&	2.28e-2	&	2.90e-1	&	1.15e-4	&	2.90e-1	&	5.57e-2	&	5.89e-3	&	4.93e-2	\\
	&	100	&	4.35e-2	&	1.94e-6	&	4.14e-2	&	6.18e-1	&	2.53e-4	&	6.17e-1	&	7.45e-2	&	9.16e-3	&	6.43e-2	\\
	&	150	&	2.81e-1	&	4.38e-6	&	2.77e-1	&	2.58e+0	&	5.32e-4	&	2.58e+0	&	1.47e-1	&	2.21e-2	&	1.23e-1	\\
	&	200	&	7.41e-2	&	1.55e-6	&	6.99e-2	&	5.92e+0	&	9.82e-4	&	5.92e+0	&	3.18e-1	&	5.17e-2	&	2.64e-1	\\
\hline
Chained\_Crescent\_I	&	25	&	2.93e-3	&	1.50e-5	&	2.84e-3	&	1.88e-2	&	7.46e-5	&	1.87e-2	&	1.96e-1	&	1.21e-3	&	1.95e-1	\\
	&	50	&	3.52e-3	&	2.62e-5	&	3.39e-3	&	1.26e-1	&	8.03e-5	&	1.26e-1	&	9.97e-3	&	4.86e-4	&	9.33e-3	\\
	&	75	&	3.17e-3	&	2.58e-5	&	3.04e-3	&	3.88e-1	&	1.03e-4	&	3.88e-1	&	1.40e-2	&	6.76e-4	&	1.31e-2	\\
	&	100	&	3.47e-3	&	3.09e-5	&	3.31e-3	&	8.51e-1	&	2.91e-4	&	8.51e-1	&	2.93e-1	&	3.12e-3	&	2.89e-1	\\
	&	150	&	3.82e-3	&	4.34e-5	&	3.62e-3	&	3.06e+0	&	7.17e-4	&	3.05e+0	&	3.07e-2	&	1.70e-3	&	2.86e-2	\\
	&	200	&	3.84e-3	&	6.20e-5	&	3.59e-3	&	6.92e+0	&	1.67e-3	&	6.91e+0	&	2.57e-1	&	5.71e-3	&	2.51e-1	\\
\hline
Chained\_Crescent\_II	&	25	&	3.48e-3	&	3.00e-5	&	3.29e-3	&	1.61e-2	&	3.65e-5	&	1.60e-2	&	2.30e-2	&	8.77e-4	&	2.20e-2	\\
	&	50	&	5.41e-3	&	6.36e-5	&	5.10e-3	&	8.56e-2	&	7.67e-5	&	8.55e-2	&	3.67e-2	&	1.21e-3	&	3.53e-2	\\
	&	75	&	5.44e-3	&	8.62e-5	&	4.91e-3	&	2.79e-1	&	1.13e-4	&	2.79e-1	&	6.08e-2	&	1.77e-3	&	5.87e-2	\\
	&	100	&	4.46e-3	&	6.92e-5	&	4.08e-3	&	6.12e-1	&	2.80e-4	&	6.12e-1	&	1.09e-1	&	2.65e-3	&	1.06e-1	\\
	&	150	&	2.89e-1	&	1.71e-3	&	2.85e-1	&	2.66e+0	&	6.28e-4	&	2.66e+0	&	1.56e-1	&	3.75e-3	&	1.51e-1	\\
	&	200	&	5.53e-2	&	8.04e-4	&	5.30e-2	&	6.98e+0	&	1.05e-3	&	6.98e+0	&	2.29e-1	&	5.25e-3	&	2.23e-1	\\
\hline
\end{tabular}
\end{minipage}
}
\end{table}
}

\begin{figure}[htbp]
    \centering
    \begin{subfigure}[b]{0.48\textwidth}
        \includegraphics[width=\textwidth]{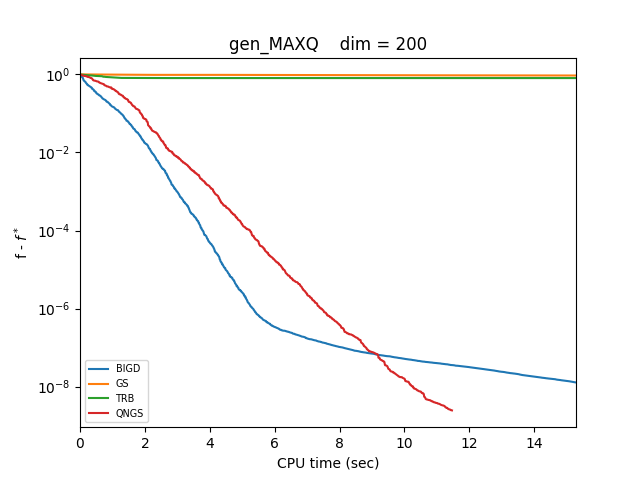}
        \caption{}
    \end{subfigure}
    \hfill
    \begin{subfigure}[b]{0.48\textwidth}
        \includegraphics[width=\textwidth]{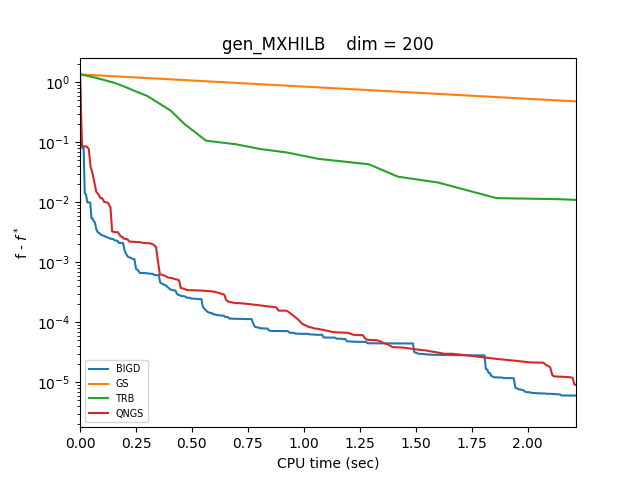}
        \caption{}
    \end{subfigure}
    
    \begin{subfigure}[b]{0.49\textwidth}
        \includegraphics[width=\textwidth]{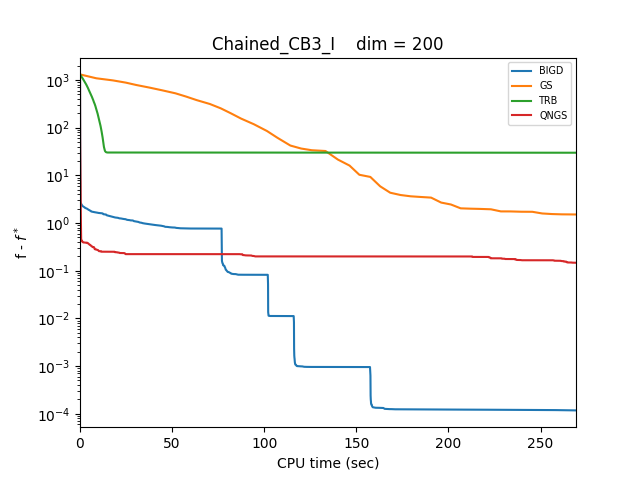}
        \caption{}
    \end{subfigure}
    \hfill
    \begin{subfigure}[b]{0.49\textwidth}
        \includegraphics[width=\textwidth]{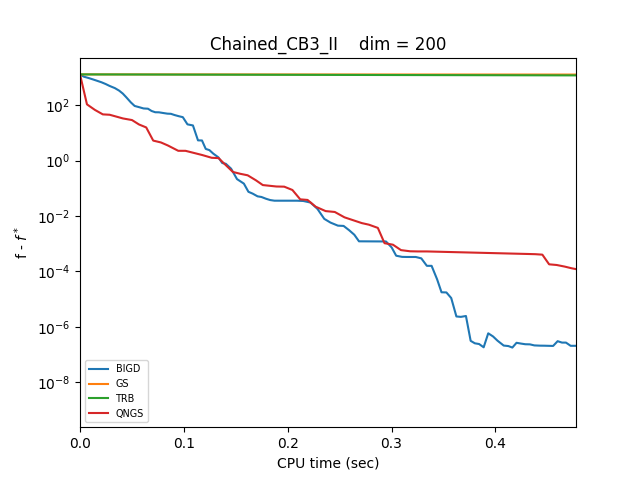}
        \caption{}
    \end{subfigure}
    
    \begin{subfigure}[b]{0.49\textwidth}
        \includegraphics[width=\textwidth]{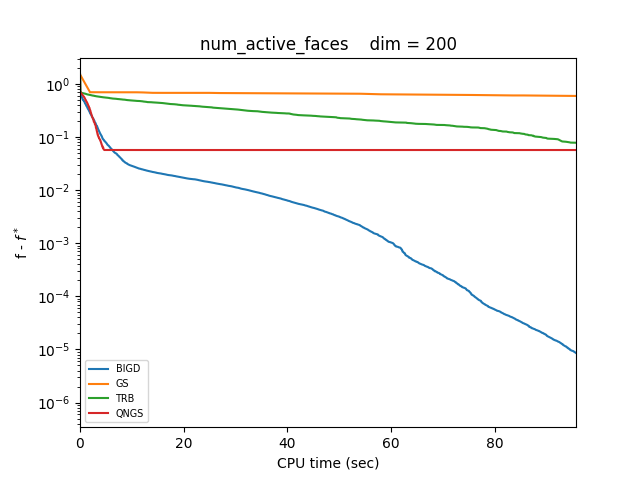}
        \caption{}
    \end{subfigure}
    \hfill
    \begin{subfigure}[b]{0.49\textwidth}
        \includegraphics[width=\textwidth]{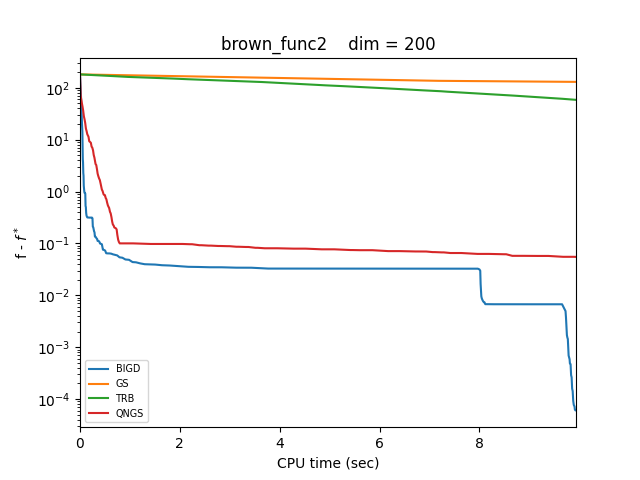}
        \caption{}
    \end{subfigure}
    
    \begin{subfigure}[b]{0.49\textwidth}
        \includegraphics[width=\textwidth]{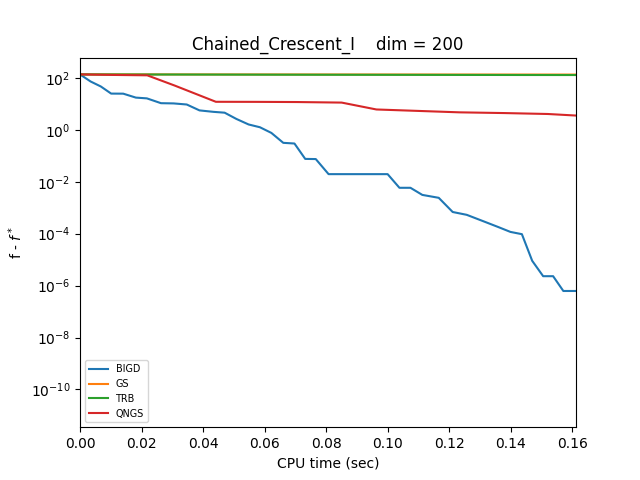}
        \caption{}
    \end{subfigure}
    \hfill
    \begin{subfigure}[b]{0.49\textwidth}
        \includegraphics[width=\textwidth]{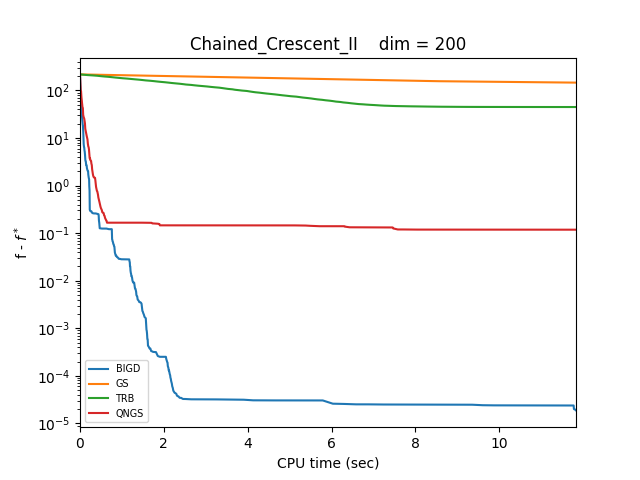}
        \caption{}
    \end{subfigure}
    \caption{Objective value versus computational time for 8 problems with $n=200$ and random initial points}
    \label{fig:obj_dec}
\end{figure}

\begin{figure}[htbp]
    \centering
    \begin{subfigure}[b]{0.48\textwidth}
        \includegraphics[width=\textwidth]{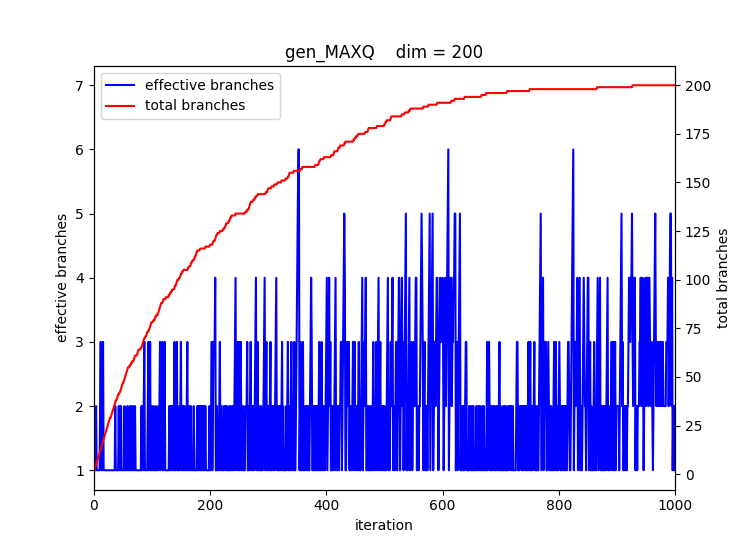}
        \caption{}
    \end{subfigure}
    \hfill
    \begin{subfigure}[b]{0.48\textwidth}
        \includegraphics[width=\textwidth]{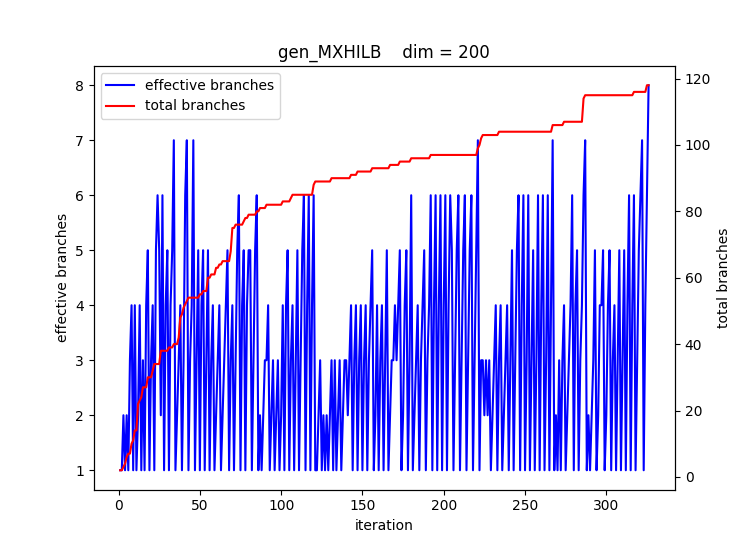}
        \caption{}
    \end{subfigure}
    
    \begin{subfigure}[b]{0.49\textwidth}
        \includegraphics[width=\textwidth]{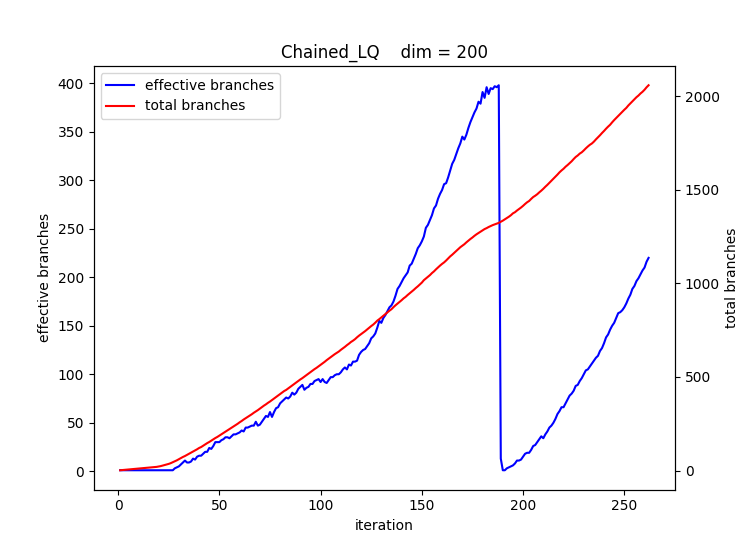}
        \caption{}
    \end{subfigure}
    \hfill
    \begin{subfigure}[b]{0.49\textwidth}
        \includegraphics[width=\textwidth]{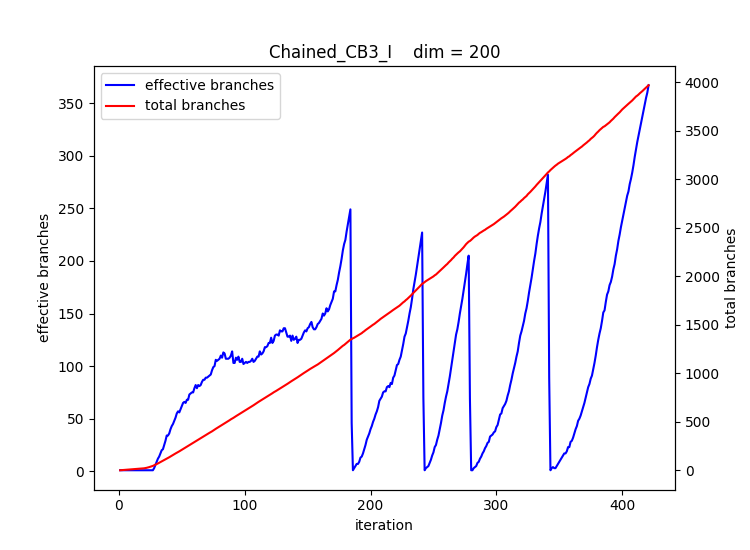}
        \caption{}
    \end{subfigure}
    
    \begin{subfigure}[b]{0.49\textwidth}
        \includegraphics[width=\textwidth]{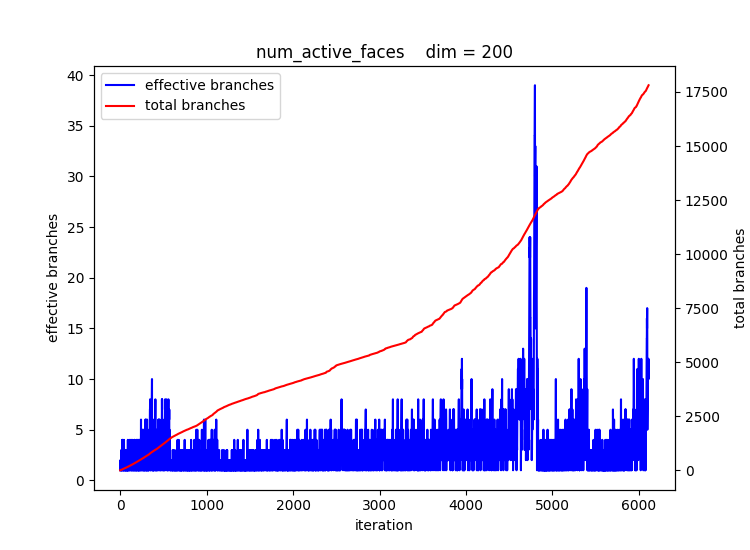}
        \caption{}
    \end{subfigure}
    \hfill
    \begin{subfigure}[b]{0.49\textwidth}
        \includegraphics[width=\textwidth]{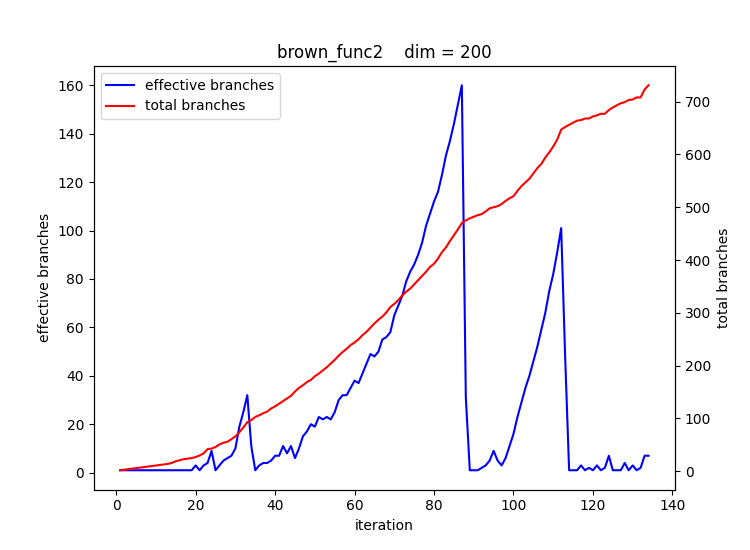}
        \caption{}
    \end{subfigure}
    
    \begin{subfigure}[b]{0.6\textwidth}
        \centering
        \includegraphics[width=\textwidth]{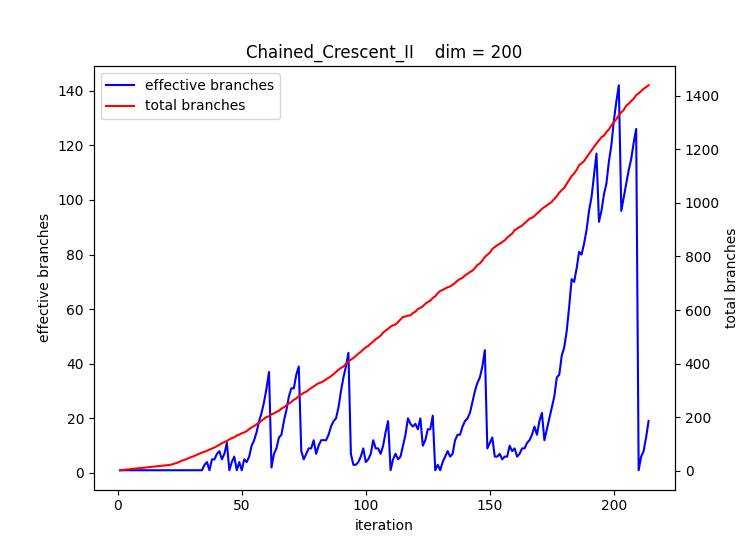}
        \caption{}
    \end{subfigure}
    \caption{Number of effective branches and visited branches for 7 problems with $n=200$ and random initial points}
    \label{fig:act_branch}
\end{figure}

\bibliography{references}
\appendix
\section{Auxiliary results}\label{app:proof} 
\begin{proposition}\label{prop:<u,vi>}
For the set $\{v_i\}^k_{i=1}$ of $k$ vectors in $\mR^m$, consider the quadratic programming problem 
\beq\label{opt:min_lambda_v}
\min_\lambda \norm{\sum^k_{i=1}\lambda_iv_i}^2\;
\mathrm{ s.t. } \sum^k_{i=1}\lambda_i=1,\;\lambda_i\ge 0\;\forall i\in[k].
\eeq
Let $\lambda^*_i$ be an optimal solution of \eqref{opt:min_lambda_v}
and $d^*=\sum^k_{i=1}\lambda^*_iv_i$.
Then the inequality $\inner{v_i}{d^*}\ge\norm{d^*}^2$ holds for each $i\in[k]$.
\end{proposition}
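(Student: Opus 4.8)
The plan is to recognize that $d^*$ is the minimum-norm point in the convex hull $C=\conv\{v_i\}^k_{i=1}$, and then to exploit the first-order optimality condition via a feasible perturbation toward each vertex of the probability simplex. First I would fix an index $i\in[k]$ and consider the one-parameter family $\lambda(\epsilon)=\lambda^*+\epsilon(e_i-\lambda^*)$ for $\epsilon\in[0,1]$, where $e_i$ is the $i$-th standard unit vector. Since both $\lambda^*$ and $e_i$ are feasible for \eqref{opt:min_lambda_v}, and the feasible set (the simplex $\{\lambda:\sum_i\lambda_i=1,\;\lambda_i\ge0\}$) is convex, the perturbed point $\lambda(\epsilon)$ stays feasible for every $\epsilon\in[0,1]$.

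The corresponding vector is $d(\epsilon)=\sum_j\lambda_j(\epsilon)v_j=d^*+\epsilon(v_i-d^*)$, so that
\[
\norm{d(\epsilon)}^2=\norm{d^*}^2+2\epsilon\inner{d^*}{v_i-d^*}+\epsilon^2\norm{v_i-d^*}^2.
\]
Because $\lambda^*$ is optimal we have $\norm{d(\epsilon)}^2\ge\norm{d^*}^2$ for all $\epsilon\in[0,1]$, which forces the coefficient of the linear term to be nonnegative: dividing by $\epsilon>0$ and letting $\epsilon\to0^+$ yields $\inner{d^*}{v_i-d^*}\ge0$, i.e. $\inner{v_i}{d^*}\ge\norm{d^*}^2$. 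Since $i$ was arbitrary, the inequality holds for all $i\in[k]$, completing the argument.

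I do not expect a serious obstacle here; the only points requiring care are verifying that $\lambda(\epsilon)$ remains feasible (immediate from convexity of the simplex) and that the first-order inequality is necessary for the constrained minimizer (standard, since we minimize a smooth convex objective over a convex set, so no constraint qualification issues arise). An equivalent and slightly slicker route would be to invoke the variational-inequality characterization of the projection of the origin onto $C$, namely $\inner{d^*}{v-d^*}\ge0$ for every $v\in C$, and then specialize to $v=v_i$; the perturbation argument above is simply a self-contained derivation of that same inequality, which I prefer since it keeps the proof elementary and avoids citing external projection theory.
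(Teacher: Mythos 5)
Your proof is correct and takes essentially the same approach as the paper: the paper identifies $d^*$ as the projection of $\bs{0}$ onto $C=\conv\{v_1,\ldots,v_k\}$ and cites the projection theorem to obtain $\inner{x}{d^*}\ge\norm{d^*}^2$ for all $x\in C$, which is exactly the variational inequality your perturbation along $\lambda(\epsilon)=\lambda^*+\epsilon(e_i-\lambda^*)$ derives from scratch. The only difference is that you inline the standard proof of that inequality rather than invoking projection theory --- a self-contained variant you yourself flag as equivalent --- and your feasibility and first-order reasoning are sound throughout.
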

\begin{proof}
Let $C=\conv\{v_1,\ldots,v_k\}$ and then $d^*$ can is the projecting point of $\bs{0}$
on the convex set $C$. Using the projection theorem on a convex set, any point 
$x\in{C}$ can be represented as $x=d^*+u$ with $\inner{u}{d^*}\ge0$.
Therefore, one has $\inner{x}{d^*}\ge\norm{d^*}^2$, and in particular, 
$\inner{v_i}{d^*}\ge\norm{d^*}^2$ for all $i\in[k]$.
\end{proof}
\begin{proposition}\label{prop:d_limit}
Let $\{v_i\}^k_{i=1}$, $\lambda^*$ and $d^*$ be defined the same as in \propref{prop:<u,vi>}.
Suppose for each $i\in[k]$, the is a sequence $\{u^j_i\}^\infty_{j=1}$ of vectors satisfying 
$\lim_ju^j_i=v_i$. Let $\lambda^j$ be an optimal solution of the quadratic program \eqref{opt:min_lambda_v}
after replacing $v_i$ with $u^j_i$ for $i\in[k]$, and let $d^j=\sum^k_{i=1}\lambda^ju^j_i$. Then
the following limit holds: $\lim_j\norm{d^j}=\norm{d^*}$.
\end{proposition}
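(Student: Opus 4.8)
The plan is to recognize that $\norm{d^*}^2$ and $\norm{d^j}^2$ are precisely the optimal values of the quadratic program \eqref{opt:min_lambda_v} for the data $\{v_i\}$ and $\{u^j_i\}$ respectively; equivalently, $\norm{d^*}$ and $\norm{d^j}$ are the distances from the origin to the convex hulls $\conv\{v_1,\ldots,v_k\}$ and $\conv\{u^j_1,\ldots,u^j_k\}$. The claim is therefore a continuity statement for the optimal value of the QP under perturbation of its generators, and I would prove it directly by sandwiching $\norm{d^j}$ between two bounds, establishing $\limsup_j\norm{d^j}\le\norm{d^*}$ and $\liminf_j\norm{d^j}\ge\norm{d^*}$.

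For the upper bound I would exploit that the simplex constraint set is identical across all perturbed problems. Since $\lambda^*$ is feasible for the $j$-th problem, optimality of $\lambda^j$ gives $\norm{d^j}=\norm{\sum_i\lambda^*_iu^j_i}$ as an upper bound, i.e. $\norm{\sum_i\lambda^j_iu^j_i}\le\norm{\sum_i\lambda^*_iu^j_i}$; letting $j\to\infty$ and using $u^j_i\to v_i$ yields $\sum_i\lambda^*_iu^j_i\to\sum_i\lambda^*_iv_i=d^*$, hence $\limsup_j\norm{d^j}\le\norm{d^*}$. For the lower bound I would run the comparison in the reverse direction: $\lambda^j$ is feasible for the unperturbed problem, so $\norm{d^*}\le\norm{\sum_i\lambda^j_iv_i}$. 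Writing $\sum_i\lambda^j_iv_i=\sum_i\lambda^j_iu^j_i+\sum_i\lambda^j_i(v_i-u^j_i)$ and using $\lambda^j_i\ge0$ together with $\sum_i\lambda^j_i=1$, the triangle inequality bounds the error term by $\max_i\norm{v_i-u^j_i}$, which tends to $0$. This gives $\norm{d^*}\le\norm{d^j}+\max_i\norm{v_i-u^j_i}$ and therefore $\liminf_j\norm{d^j}\ge\norm{d^*}$. Combining the two bounds yields $\lim_j\norm{d^j}=\norm{d^*}$.

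The one point that requires care, and what I regard as the conceptual crux, is that I deliberately avoid claiming $\lambda^j\to\lambda^*$. The minimizer of \eqref{opt:min_lambda_v} need not be unique (for instance when several $v_i$ coincide with the projection point), so the minimizing coefficients may fail to converge even though the optimal values do. The sandwich argument sidesteps this entirely: it uses only the feasibility of $\lambda^*$ in the perturbed problem and of $\lambda^j$ in the original problem, together with the uniform control $\max_i\norm{u^j_i-v_i}\to0$, and never appeals to convergence of the minimizers themselves. One could alternatively invoke Berge's maximum theorem to obtain continuity of the value function directly, but the elementary two-inequality estimate is self-contained and makes the role of the common simplex constraint transparent.
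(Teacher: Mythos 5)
Your proposal is correct and follows essentially the same route as the paper's own proof: both establish the two-sided bound $\norm{d^*}-\max_i\norm{u^j_i-v_i}\le\norm{d^j}\le\norm{d^*}+\max_i\norm{u^j_i-v_i}$ by swapping feasible points between the perturbed and unperturbed problems ($\lambda^*$ feasible for the $j$-th problem, $\lambda^j$ feasible for the original) and applying the triangle inequality with the convexity of the weights. Your explicit observation that the argument never requires $\lambda^j\to\lambda^*$ is a sound point the paper leaves implicit, but it does not change the substance of the argument.
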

\begin{proof}
We can derive an upper bound of $\|d^j\|$ as follows:
\bdm
\ba
\|d^j\|&=\norm{\sum^k_{i=1}\lambda^j_iu^j_i}\le\norm{\sum^k_{i=1}\lambda^*_iu^j_i}
\le\norm{\sum^k_{i=1}\lambda^*_iv_i}+\norm{\sum^k_{i=1}\lambda^*_iu^j_i - \sum^k_{i=1}\lambda^*_iv_i} \\
&\le\norm{d^*}+\sum^k_{i=1}\lambda^*_i\norm{u^j_i-v_i}\le\norm{d^*}+\max_i\norm{u^j_i-v_i},
\ea
\edm
where the first inequality follows from the optimality of $d^j$.
On the other hand, $\|d^j\|$ can be lower bounded as:
\bdm
\ba
\|d^j\|&=\norm{\sum^k_{i=1}\lambda^j_iu^j_i}\ge\norm{\sum^k_{i=1}\lambda^j_iv_i}-\sum^k_{i=1}\lambda^j_i\norm{u^j_i-v_i}\\
&\ge\norm{d^*}-\sum^k_{i=1}\lambda^j_i\norm{u^j_i-v_i} \ge\norm{d^*}-\max_i\norm{u^j_i-v_i},
\ea
\edm
where the second inequality is due to the optimality of $d^*$.
Since $\lim_ju^j_i=v_i$, the above two inequalities imply $\lim_j\|d^j\|=\norm{d^*}$.
\end{proof}
\end{document}